\numberwithin{equation}{section}
\newtheorem{theorem}{Theorem}[section]
\newtheorem{corollary}[theorem]{Corollary}
\newtheorem{lemma}[theorem]{Lemma}
\newtheorem{proposition}[theorem]{Proposition}%[section]
\theoremstyle{definition}
\newtheorem{remark}{Remark}[section]
\title[] %Use the shortened version of the full title
      {% On
      %level sets of
   %Asymptotic and global analysis of
  On
 % Qualitative analysis of
 principal eigenvalues of linear time-periodic parabolic systems: symmetric mutation case
   %On principal eigenvalues for some cooperative periodic-parabolic systems: symmetric case
%with symmetric matrices %Asymptotic behavior and monotonicity %A viscosity solution method and level-set approach
}
 \author[ %SHUANG LIU, YUAN LOU
 ]{Shuang Liu
 }
 \thanks{{S. Liu}: School of Mathematics and Statistics, Beijing Institute of Technology, Beijing, 100081,
  China.
  }
 \email{liushuangnqkg@bit.edu.cn%; yuanlou@sjtu.edu.cn
}%\email{liushuangnqkg@ruc.edu.cn; lou@math.ohio-state.edu}
\subjclass[2010]{35P15, %Asymptotic distributions of eigenvalues in context of PDEs
47A75,  % Eigenvalue problems for linear operators
 34C25, %34D20, % Stability of solutions to ordinary differential equations
%34C25,  %Periodic solution
35K57.
%37N25,%Dynamical systems in biology
 %76F10.
}
 \keywords{Principal eigenvalue, time-periodic parabolic systems,
  asymptotic behavior,  monotonicity, the topological structures of
level sets.
  }
\begin{document}
\maketitle

% Enter the first author's name and address:
%\centerline{\scshape Shuang Liu }
%\medskip
%{\footnotesize
%% please put the address of the first author
% \centerline{Institute for Mathematical Sciences, Renmin University of China}
%   \centerline{Beijing 100872, PR China}
%} % Do not forget to end the {\footnotesize by the sign }
%
%\bigskip
%
%\centerline{\scshape Yuan Lou}
%\medskip
%{\footnotesize
% % please put the address of the second  and third author
%\centerline{Institute for Mathematical Sciences, Renmin University of China}
%\centerline{Beijing 100872, PR China;}
%   \centerline{Department of Mathematics, Ohio State University}
%   \centerline{ Columbus, OH 43210, USA}
%}

\begin{abstract}
    The paper  is concerned with  the effect of the spatio-temporal heterogeneity on the principal eigenvalue of  some linear time-periodic parabolic system.  %The system  incorporates a symmetric matrix representing the mutation.
    Various asymptotic behaviors of
the principal  eigenvalue and its  monotonicity, as a function of the diffusion rate and frequency, are first derived. In particular,
some singular behaviors %singularities
of the principal eigenvalues are observed when both diffusion rate and frequency approach zero, %which is related to
with some  scalar time-periodic Hamilton-Jacobi equation as the limiting equation.
%As applications,
Furthermore, we completely classify  %This leads to the classification of
the topological structures of
the level sets for the principal eigenvalues  in the plane of frequency and diffusion rate.
Our results not only
generalize
 %the majority
 most of the findings  in \cite{LL2022} %, which are  proposed for
 %regarding
 for scalar periodic-parabolic operators,
 but also reveal more rich
 global information,  for time-periodic parabolic systems,
 on the dependence
 %topological structures of
%the level sets
of the principal eigenvalues upon the spatio-temporal heterogeneity.
%multiple parameters. % are revealed. % found. % for the principal eigenvalue. This
%This in turn implies some
%non-monotone dependence of principal eigenvalue on diffusion rate.
\end{abstract}

\section{\bf Introduction}\label{S1}
% In recent years, the study of principal eigenvalues has proved very fruitful, especially (but not exclusively) for the study of several biological phenomena. Indeed,
% these eigenvalues encode several information that are crucial in the understanding
% of population dynamics.  Although the scalar case is now rather well understood, several problems
% remain open in the case of systems. In this paper, we propose a systematic approach for the case of space-time periodic cooperative systems, and we offer several
% contributions to their spectral analysis and optimization.

Consider the  coupled  periodic-parabolic  eigenvalue problem
\begin{equation}\label{Liu1}
% \begin{cases}
\left\{
\begin{array}{ll}
\medskip
\omega\partial_t{\bm \varphi}-\rho {\bf D}\Delta{\bm \varphi}-{\bf A}(x,t){\bm \varphi}=\lambda{\bm \varphi} &\mathrm{in}~\Omega\times\mathbb{R},\\
% &\omega\partial_t\varphi_2-D_2\Delta\varphi_2-a_{21}(x,t)\varphi_1-a_{22}(x,t)\varphi_2=\lambda(\omega,\rho)\varphi_2~~\mathrm{in}~\Omega\times [0,+\infty),\\
\medskip
  \nabla {\bm \varphi}\cdot\nu =0 & \mathrm{on}~\partial\Omega\times\mathbb{R},\\
  {\bm \varphi}(x,t)={\bm \varphi}(x,t+1), &\mathrm{in}~\Omega\times\mathbb{R},
  \end{array}
  \right.
 %\end{cases}
 \end{equation}
where  $\Omega$ is a bounded domain in $\mathbb{R}^n$ with smooth boundary $\partial\Omega$ and $\nu(x)$ denotes the unit outward
normal vector at $x\in\partial\Omega$.
The operator $\Delta=\sum_{i=1}^n \partial_{x_ix_i}$ is the  Laplace operator in $\mathbb{R}^n$
and ${\bf D}={\rm diag}(d_1,\cdots,d_n)$ is an  $n\times n$ diagonal matrix with  constant $d_i>0$, $i=1,\cdots,n$.
For each $(x,t)\in \Omega\times \mathbb{R}$,  ${\bf A}(x,t)%=(a_{ij}(x,t))_{n\times n}
$ is an  essentially positive (cooperative) %and  %${\bf A}(x,t)$
  %fully coupled
matrix (its off-diagonal entries are nonnegative), which is assumed to be symmetric and time-periodic with unit period.  Inspired by \cite{BH2020,S1992}, we assume
${\bf A}=(a_{ij})_{n\times n}$ is fully coupled, % (see section \ref{Sect.2} for the definition).
i.e. the index set $\{1,\cdots,n\}$ cannot be split up in two disjoint nonempty
sets $\mathcal{I}$ and $\mathcal{J}$ such that $a_{ij}(x, t)\equiv 0$ in $\Omega\times\mathbb{R}$ for $i\in\mathcal{I}$ and $j\in\mathcal{J}$.
Parameters $\omega,\rho>0$ represent the frequency and diffusion rate, respectively.

%Denote $\mathcal{C}=\Omega\times(0,T)$ by a periodicity cell.

By the Krein-Rutman theorem \cite{KR1950}, it is shown in \cite{BH2020, S1992} that
 problem \eqref{Liu1} admits a real and simple eigenvalue, called {\it principal eigenvalue}. % denoted by $\lambda(\omega,\rho)$.
 It
 has the smallest real part among all eigenvalues of \eqref{Liu1} and corresponds to a positive eigenfunction, %${\bf \varphi}:\mathbb{R}^n\times\mathbb{R}\mapsto \mathbb{R}^n$,
  for which each entry is a positive time-periodic function. % see also \cite[Proposition 16.1]{Hess1991}  for the corresponding result for the  scalar periodic-parabolic  problem.
  Our study is focused on the dependence of principal eigenvalue
%$\lambda(\omega,\rho)$
on frequency
%$\omega$
and diffusion rate. % $\rho$. %  as motivated by the recent work for time-periodicsystems \cite{BH2020,BLSS2024, GM2021,K2022,ZZ2021,ZZ2022}.

\subsection{%Motivation and previous work
Background and motivations}\label{subsection1.1}
There has recently been considerable interest in issues related to qualitative analysis of  the principal eigenvalue for problem \eqref{Liu1}; see for example \cite{BH2020,BLSS2024, D2009,LL2016,ZZ2021}.
%This
 Such interest stems mainly from %the fundamental role of principal eigenvalue
% the wide spectrum of applications requiring the homogenization (averaging) of Hamilton-Jacobi equations.
%The principal eigenvalue $\lambda(\omega,\rho)$
%plays a fundamental role
%in
the study of reaction-diffusion %equations and
systems
in spatio-temporally heterogeneous media. In particular, the principal eigenvalue can often be regarded as a threshold
value  in determining the dynamics of the associated nonlinear systems % in the stability analysis for equilibria
\cite{BHN2022,CC2003,Hutson2001,LL2022book}.

 %One of our motivation comes from  the studies in the evolution of
%\begin{comment}
Our motivation comes from the following linear system for  $n$ phenotypes of a species with densities ${\bf u}(x,t)=(u_1(x,t),\cdots,u_n(x,t))$: %at spatial location $x\in\Omega$ and time $t$:
 \begin{equation}\label{liu-20240724-1}
% \begin{cases}
\left\{
\begin{array}{ll}
\medskip
\omega\partial_t{\bf u}=\rho {\bf D}\Delta{\bf u}+{\bf M}(x,t){\bf u}+ {\rm diag}(c_i(x,t)){\bf u}&\mathrm{in}~\Omega\times(0,+\infty),\\
  \nabla {\bf u}\cdot\nu =0 & \mathrm{on}~\partial\Omega\times(0,+\infty),
  \end{array}
  \right.
 %\end{cases}
 \end{equation}
 where $ {\rm diag}(c_i(x,t))$ is a diagonal matrix, %written as  ${\rm diag}(c_1(x,t), \cdots,c_n(x,t))$,
 with each
$c_i$ being a time-periodic function denoting  the birth-death rate of phenotype $i$. The %time-periodic
essentially positive matrix function
${\bf M}(x,t)=(m_{ij}(x,t))_{n\times n}$ satisfying
$$
 m_{ii}(x,t)=-\sum_{j\not= i} m_{ij}(x,t),\quad \forall (x,t)\in \Omega\times\mathbb{R},\,\,i=1,\cdots,n,
 $$
represents the mutation of phenotypes.

The
long-time behaviors of the solution to \eqref{liu-20240724-1} is determined by the signs of the principal eigenvalue
of
problem \eqref{Liu1} with
\begin{equation*}%\label{liu-20240718-2}
     {\bf A}(x,t)= {\bf M}(x,t)+{\rm diag}(c_i(x,t)), \quad \forall (x,t)\in \Omega\times\mathbb{R}.
 \end{equation*}
When ${\bf M}\equiv 0$,
 i.e. there is no mutation, problem \eqref{Liu1} can be decoupled as the following $n$ scalar  time-periodic parabolic problems:
\begin{equation}\label{Liu-20240719-1}
 \begin{cases}
 \begin{array}{ll}
 \omega\partial_t \varphi-\rho d_i\Delta\varphi-c_i(x,t)\varphi=\lambda\varphi &\mathrm{in}~\Omega\times \mathbb{R},\,\,i=1,\cdots,n ,\\
   \nabla \varphi\cdot\nu=0 &\mathrm{on}~\partial\Omega\times \mathbb{R},\\
   \varphi(x,t)=\varphi(x,t+1) &\mathrm{in}~\Omega\times \mathbb{R},
   \end{array}
  \end{cases}
  \end{equation}
  %, problem \eqref{Liu-20240719-1}
  which admits the unique principal eigenvalue, denoted by $\lambda_i(\omega,\rho)$, for each $i=1,\cdots,n$;
 see \cite[Proposition 16.1]{Hess1991}.
The outcomes of phenotypes in \eqref{liu-20240724-1} are independent in this case,  and phenotype $i$ is  able to persist if and only if $\lambda_i(\omega,\rho)<0$.
This motivates extensive researches  on  scalar problem  \eqref{Liu-20240719-1} in order to clarify the effect of spatio-temporal heterogeneity on the persistence of phenotypes,
 mainly focusing on the   asymptotic behaviors of
$\lambda_i(\omega,\rho)$ with respect to frequency $\omega$
  and diffusion rate $\rho$; see e.g.  \cite{Hess1991,HSV2000,LL2022,LLPZ2019,N2009}. These findings turn out to have wide range of applications in  studies of reaction-diffusion equations \cite{BHN2022,Hess1991,Hutson2001,PZ2015}.

A natural question is %to understand the effect of
how the spatio-temporal heterogeneity may affect the dynamics of \eqref{liu-20240724-1}  when the mutation exists.
We assume ${\bf M}$ is a time-periodic and fully coupled matrix function. %for all $(x,t)\in\Omega\times \mathbb{R}$.
In the simplest
case, ${\bf M}$ is a discrete Laplacian, characterized as a symmetric and essentially positive constant matrix.  %Parameter $\varepsilon$ denotes the mutation rate.
%The primary focus of this paper is  the case  $\varepsilon>0$ where the mutation exists.
%In this  scenario,
Then all phenotypes will exhibit the same outcome and their persistence can be determined by the principal eigenvalue $\lambda(\omega,\rho)$ of  coupled problem \eqref{Liu1}.
   To %understand the effect of spatio-temporal heterogeneity on the dynamics of \eqref{liu-20240724-1}
  % answer this
    address the question, we are motivated to investigate the %dependence
   qualitative properties of $\lambda(\omega,\rho)$ as a function of  $\omega$
  and  $\rho$.  For
technical reasons we  specifically focus on the case of symmetric mutation, where matrix ${\bf M}$ (and thus ${\bf A}$) is assumed to be symmetric %in
 throughout this paper.  We left the general cases for further studies.

When
%For the spatially homogeneous but temporally periodic case where
matrix ${\bf A}%={\bf A}(t)
$ is  independent of $x$  variable, problem \eqref{Liu1} can  be reduced to a time-periodic system in  patchy environment \cite{MG2007,RHB2005}  and   the corresponding principal eigenvalue  %$\lambda=\lambda(\omega)$
is independent of the diffusion rate $\rho$.  %is independent of diffusion rate $\rho$.
The monotonicity of principal eigenvalue  with respect to frequency $\omega$ was established in \cite{LLS2022} and subsequently applied in \cite{K2022} to investigate the dispersal-induced
growth phenomenon\footnote{This phenomenon, which is of particular interest, occurs when
populations, that would become extinct when either isolated or well mixed, are able to persist
by dispersing in the habitats.}. We refer to
 \cite{BLSS2024,JY1988,K2022} for details.
In contrast,
%For the spatially heterogeneous but
%temporally constant case where
when matrix ${\bf A}%={\bf A}(x)
$ is time-independent, problem \eqref{Liu1} becomes a  cooperative elliptic problem \cite{S1992} %as investigated  in
%\cite{D2009,LL2016}
and  principal eigenvalue  %$\lambda=\lambda(\rho)$
remains unaffected by frequency $\omega$.  The asymptotic limits of principal eigenvalue %$\lambda(\rho)$
as diffusion rate
$\rho$ approaches zero  was  proved by Dancer \cite{D2009} and %.  Then this result
was extended by Lam and Lou in \cite{LL2016} to more general cases,  with applications
to %in order to  study
nonlinear competition population models. % in mathematical biology.

When matrix ${\bf A}%={\bf A}(x,t)
$ depends on both variables $x$ and $t$, %is time-independent
much less  is known about the dependence of principal eigenvalue on parameters.
%Some of
A few existing results can be found in \cite{BH2020,GM2021,ZZ2021}, which
  generalized the results in  \cite{D2009,LL2016} and established the asymptotic behaviors of principal eigenvalue for small or large diffusion rate.   Among  others, the following result holds. %their results can be stated as follows. % see also \cite{ZZ2022} for similar results in patch models.

% Define $\varphi_\omega=(\varphi^{1}_\omega,\varphi^{2}_\omega)$, which is the principal eigenfunction of problem (\ref{Liu1}) in the form of
% \begin{equation}\label{Liu01}
%  \begin{cases}
% \begin{array}{ll}
% \omega\varphi_t-D\Delta\varphi-{\bf A}(x,t)\varphi=\lambda(\omega,\rho)\varphi,~~&\mathrm{in}~\Omega\times [0,+\infty),\\
%   \nabla \varphi\cdot\nu =0,~~&\mathrm{on}~\partial\Omega\times[0,+\infty),\\
%   \varphi(x,t)=\varphi(x,t+T),&\mathrm{in}~\Omega\times [0,+\infty),
%   \end{array}
%  \end{cases}
%  \end{equation}
%  where $D=\mathrm{diag}(D_1,D_2)$ and
%  \begin{equation*}
%     A=\left(
%     \begin{array}{cc}
%       a_{11} & a_{12} \\
%       a_{21} & a_{22}
%     \end{array}
%     \right).
%  \end{equation*}

\begin{theorem}[\cite{BH2020}]\label{Bai-He-2020}
    Let
$\lambda(\omega, \rho)$ be the principal eigenvalue of  \eqref{Liu1}, then for each $\omega>0$,
\begin{equation*}%\label{liu-20240331-1}
\lim_{\rho\to 0}\lambda(\omega,\rho)=\underline{h}(\omega):
=\min_{x\in\overline{\Omega}} h(x,\omega), %:=\min\left\{\min_{\mathbb{R}\times \mathcal{O}}c(x,y),\,\,\,\, \inf_{y\in \Omega\setminus\mathcal{O}} \hat{c}(y) %\int_0^1 c(x,y)\mathrm{d}x
%\right\}
\end{equation*}
where for any  $x\in \Omega$ and $\omega>0$, $h(x,\omega)$ denotes the principal eigenvalue of the problem
\begin{equation}\label{liu-20240317-1}
    \omega\frac{{\rm d}{\bm\phi}}{{\rm d} t}-{\bf A}(x,t){\bm\phi}= h{\bm\phi},
\qquad
  {\bm\phi}(t)={\bm\phi}(t+1), \,\,\,\,t\in\mathbb{R}.
\end{equation}
\end{theorem}

In the present
paper,  we will establish some monotonicity and  asymptotic behaviors of the principal eigenvalue
for problem \eqref{Liu1}. They lead to the complete classification for the topological structures of its level sets,
as a function of frequency $\omega$ and diffusion rate $\rho$.  This will help us
better understand the  combined effects of $\omega$ and $\rho$ on the principal eigenvalue. % of  \eqref{Liu1}.

%For any essentially positive  matrix ${\bf B}$, we denote by $\mu({\bf B})$  the maximum eigenvalue of matrix ${\bf B}$ associated with a nonnegative eigenvector.

%\subsection{Asymptotic behaviors and monotonicity}

% In contrast, it is established by \cite[Theorem 1.4]{BH2020} that

\subsection{%Main results
Main results I: asymptotics and monotonicity
}\label{subsection1.2} %To this end,
We define
\begin{equation}\label{def_underlineC}
    \underline{C}:=-\int_0^1 \max_{x\in \overline{\Omega}}\mu({\bf A}(x,t)){\rm d}t \quad\text{and}\quad C_*:=-\max_{x\in \overline{\Omega}}\int_0^1\mu({\bf A}(x,t)){\rm d}t, %\quad\text{and}\quad \overline{C}:=-\int_0^1\!\!\int_\Omega \mu(x,t){\rm d}x{\rm d}t.
\end{equation}
where $\mu({\bf A}(x,t))$ denotes the maximal eigenvalue of matrix ${\bf A}(x,t)$ associated with a nonnegative eigenvector.
By
%the limit of small frequency
%in
        Theorem \ref{Bai-He-2020} and \cite[Theorem 2.1]{LLS2022},
 it follows that

       $$ \lim_{\omega\to 0} \lim_{\rho\to 0} \lambda(\omega,\rho)=\lim_{\omega\to 0}\underline{h}(\omega)=C_*.
$$
%where $\underline{\lambda}(\rho)$ is the limit of $\lambda(\omega,\rho)$ as $\omega\to 0$.
%where  the second equlity follows from  the variational characterization of $\lambda_0(t,\rho)$ (see e.g. \cite{CC2003,LL2022book}).
In contrast,
       Proposition \ref{TH-liu-20240227} and \cite[Theorem 1]{D2009} imply that
       $$ \lim_{\rho\to 0}\lim_{\omega\to 0}\lambda(\omega,\rho)=\lim_{\rho\to 0}\underline{\lambda}(\rho)=\underline{C},
$$
where $\underline{\lambda}(\rho)$ is defined in Proposition \ref{TH-liu-20240227} to serve
as the limit of $\lambda(\omega,\rho)$ as $\omega\to 0$.
        This implies that the double limit of $\lambda(\omega,\rho)$ as $(\omega,\rho)\to (0,0)$  does  not exist  once $\underline{C}\neq C_*$.
%To further understand the limiting behaviors of $\lambda(\omega,\rho)$ when both $\omega$ and $d$ are sufficiently small, we establish  the following result.
%Our next result
%presents a %complete
%description for
This motivates us to %We  further
consider the asymptotic behaviors of $\lambda(\omega,\rho)$ near $(\omega,\rho)=(0,0)$
 and  study %a
 the transition between the two  regimes  described above.

\begin{theorem}\label{TH-liu-20240106}
%Assume that $D=\rho {\rm diag}\{d_1,\cdots, d_k\}$ and
Let
$\lambda(\omega, \rho)$ denote the principal eigenvalue of problem \eqref{Liu1}.
Then %there holds
$$\lim_{(\omega,\rho)\to (0,0) \atop \frac{\omega}{\sqrt{\rho}}\to 0}\lambda(\omega, \rho)=\underline{C} %-\int_0^1 \max_{x\in \overline{\Omega}}\mu(x,t){\rm d}t
\quad\text{and}\quad
\lim_{(\omega,\rho)\to (0,0) \atop \frac{\omega}{\sqrt{\rho}}\to +\infty}\lambda(\omega, \rho)=C_*. %-\max_{x\in \overline{\Omega}}\int_0^1\mu(x,t){\rm d}t,
$$
%whereas $\mu(x,t)$ is the principal eigenvalue of matrix ${\bf A}(x,t)$.

Furthermore, for any ${\bf p}\in\mathbb{R}^n$, let  ${\rm H} ({\bf p},x,t)\in \mathbb{R}$ be the  maximal eigenvalue of matrix
$${\rm diag}\{d_1|{\bf p}|^2,\cdots, d_n|{\bf p}|^2\}+{\bf A}(x,t)$$ % which
associated with a nonnegative eigenvector. %with the smallest real part.
Then there holds
$$\lim_{(\omega,\rho)\to (0,0) \atop \frac{\omega}{\sqrt{\rho}}\to \vartheta}\lambda(\omega, \rho)=C(\vartheta),$$
whereas $C(\vartheta)$ is the unique value for which the following time-periodic Hamilton-Jacobi
equation admits a Lipschitz viscosity solution:
\begin{equation}\label{Liu01-20210716}
 \begin{cases}
\begin{array}{ll}
\vartheta\partial_t U+{\rm H} (\nabla U,x,t)=-C(\vartheta) &\mathrm{in}~\Omega\times \mathbb{R},\\
  \nabla U\cdot\nu =0 &\mathrm{on}~\partial\Omega\times \mathbb{R},\\
  U(x,t)=U(x,t+1) &\mathrm{in}~\Omega\times \mathbb{R}.
  \end{array}
 \end{cases}
 \end{equation}
 Moreover,
 $C(\vartheta)$ is continuous %and non-decreasing
 in $\vartheta$, $C(\vartheta)\to \underline{C}$ as $\vartheta\to 0$, and $C(\vartheta)
\to C_*$ as $\vartheta\to+\infty$.
 \end{theorem}

% \begin{theorem}\label{TH3}
% %Assume that $D=\rho {\rm diag}\{d_1,\cdots, d_k\}$.
% For any $p\in\mathbb{R}^n$, let  $H (p,x,t)\in \mathbb{R}$ be principal eigenvalue of matrix ${\rm diag}\{d_1|p|^2,\cdots, d_k|p|^2\}+{\bf A}(x,t)$, which corresponds to a non-negative eigenvector. %with the smallest real part.
% Then there holds
% $$\lim_{(\omega,\rho)\to (0,0) \atop \frac{\omega}{\sqrt{\rho}}\to \vartheta}\lambda(\omega, \rho)=C(\vartheta),$$
% whereas $C(\vartheta)$ is the unique value for which the following time-periodic Hamilton-Jacobi
% equation admits a Lipschitz viscosity solution:
% \begin{equation}\label{Liu01-20210716}
%  \begin{cases}
% \begin{array}{ll}
% \vartheta\partial_t U+H (\nabla U,x,t)=-C(\vartheta) &\mathrm{in}~\Omega\times \mathbb{R},\\
%   \nabla U\cdot\nu =0 &\mathrm{on}~\partial\Omega\times \mathbb{R},\\
%   U(x,t)=U(x,t+1) &\mathrm{in}~\Omega\times \mathbb{R}.
%   \end{array}
%  \end{cases}
%  \end{equation}
%  \end{theorem}

Theorem \ref{TH-liu-20240106} indicates that the transition %between  two different
of various asymptotic behaviors for principal eigenvalue  $\lambda(\omega,\rho)$ in the regime $(\omega,\rho)\to(0,0)$ %\eqref{liu-0708-3} and \eqref{liu-0708-4}
 occurs at $\omega\approx \sqrt{\rho}$. This is connected by %the critical value $C(\vartheta)$ of
 the time-periodic Hamilton-Jacobi equation \eqref{Liu01-20210716} with some convex and coercive Hamiltonian  that represents the maximal eigenvalue of some nonnegative matrix. It is somewhat interesting and surprising that the limiting behaviors of %principal eigenvalue of
 system \eqref{Liu1} can be determined by a scalar Hamilton-Jacobi equation.
 This is a generalization of  \cite[Theorem 1.2]{LL2022} for %scalar
 periodic-parabolic operators and the proof is
 %greatly complicated
much more involved for the present case of systems.

 The problem of  identifying a pair $(C(\vartheta), U_\vartheta)%\in \mathbb{R}\times C(\Omega\times\mathbb{R})
 $ for which $U_\vartheta$ is a viscosity solution of \eqref{Liu01-20210716} is known as an additive eigenvalue problem.
 %or an ergodic problem in terms of optimal control \cite{BC1997}, and it
 This is related to  weak KAM theory \cite{I2011,T2021} and homogenization theory \cite{EG2001,LPV1988}, both of which have extensive applications in the asymptotic propagation \cite{BN2022,ES1989}  and the evolution of dispersal \cite{LLP2022} for reaction diffusion equations.   Problem \eqref{Liu01-20210716} has a viscosity solution only if $C(\vartheta)$ is assigned a specific value, for which the uniqueness was established in the  pioneering work \cite{LPV1988}. %Such value  is called “Ma\~{n}\'{e}'s critical value” \cite{M1991} or “the effective Hamiltonian” \cite{EG2001}.
Our next result  is %concerned
to provide more insight on the connections between the principal eigenvalue and such critical value.

%which turns out to be associated to

\begin{theorem}\label{liutheorem-0229-1}
%  Suppose that matrix ${\bf A}(x,t)$ is symmetric for all $(x,t)\in \Omega\times\mathbb{R}$.
Let $C(\omega/\sqrt{\rho})$ be the critical value of %any Lipschitz  viscosity solution of
\eqref{Liu01-20210716} with $\vartheta=\omega/\sqrt{\rho}$.
%associated with $C(\vartheta)$.
% Let ${\bm \varphi}=(\varphi_1,\cdots,\varphi_n)
% >0$ %, normalized by $\int_0^1\!\!\int_\Omega \varphi^{\rm T}\psi=1$,  %and $\psi=(\psi_1,\cdots,\psi_n)>0$
% denote the principal eigenfunction of problem \eqref{Liu1}. % and \eqref{Liu2}, respectively.
Then  $\lambda(\omega,\rho)\geq C(\omega/\sqrt{\rho})$ for all $\omega,\rho>0$.
% there holds %following inequality holds:
% \begin{align*}%\label{liu0412-3}
% \begin{split}
%   \lambda(\omega,\rho)-C(\omega/\sqrt{\rho})
%     \geq  \,&\rho\sum_{i=1}^n\int_0^1\!\!\!\int_\Omega d_i|\nabla \sqrt{\varphi_i\psi_i}|^2\\ &+\sum_{i=1}^n\int_0^1\!\!\!\int_\Omega \varphi_i\psi_i \left|\frac{\sqrt{\rho}}{2}\nabla\log (\frac{\varphi_i}{\psi_i})+\nabla U_{\omega/\sqrt{\rho}} \right|^2,
%     \quad \forall \omega,\rho>0,
%    \end{split}
% \end{align*}
% whereas ${\bm \psi}=(\psi_1,\cdots,\psi_n)>0$ normalized by $\int_0^1\!\!\int_\Omega {\bm \varphi}^{\rm T}{\bm \psi}=1$   denotes the principal eigenfunction of the adjoint problem of \eqref{Liu1} given by
% \begin{equation}\label{Liu2}
%  \left\{
% \begin{array}{ll}
% \medskip
% -\omega\partial_t {\bm \psi}-\rho {\bf D}\Delta{\bm \psi}-{\bf A}(x,t){\bm \psi}=\lambda{\bm \psi}~~&\mathrm{in}~\Omega\times \mathbb{R},\\
% \medskip
%   \nabla {\bm \psi}\cdot\nu =0,~~&\mathrm{on}~\partial\Omega\times\mathbb{R},\\
%   {\bm \psi}(x,t)={\bm \psi}(x,t+1),&\mathrm{in}~\Omega\times \mathbb{R}.
%   \end{array}
%  \right.
%  \end{equation}
Moreover,   the  critical value $C(\vartheta)$ %of time-periodic Hamilton-Jacobiequation  \eqref{Liu01-20210716}
is non-decreasing in $\vartheta>0$.
\end{theorem}

The inequality in Theorem \ref{liutheorem-0229-1} is a reduced version of  Theorem \ref{liutheorem-0229},  in which more precise estimation is provided based on the associated  eigenfunctions. The monotonicity in Theorem \ref{liutheorem-0229-1} is potentially of interest in understanding the effect of temporal heterogeneity on ``effect Hamiltonians" as studied in \cite{EG2001,EG2002}, where the asymptotic behaviors of $C(\vartheta)$ for more general time-periodic Hamiltonian  in \eqref{Liu01-20210716} are discussed.
%We refer to Theorem \ref{liutheorem-0229} for more precise estimation, which implies Theorem \ref{liutheorem-0229-1} directly. %establishes the estimate $\lambda(\omega,\rho)\geq C(\omega/\sqrt{\rho})$ for all $\omega,\rho>0$.

A corollary of Theorem \ref{liutheorem-0229-1} is that $\lambda(\vartheta\sqrt{\rho},\rho)\geq C(\vartheta)$ for all $\rho,\vartheta>0$. In view of $\lambda(\vartheta\sqrt{\rho},\rho)\to C(\vartheta)$ as $\rho\to 0$ as shown in Theorem \ref{TH-liu-20240106},
this implies  $\lambda(\vartheta\sqrt{\rho},\rho)$ attains its global minimal at $\rho=0$. A natural conjecture is that $\lambda(\vartheta\sqrt{\rho},\rho)$ is monotone increasing in $\rho$ for any $\vartheta>0$.
The following result gives a positive answer.
%This is indeed the case as proved in Theorem \ref{Monotonicity-Thm}, which extends the main result in \cite{LL2020}. % by noting that $\lambda(d,\sqrt{d}\vartheta)\to C(\vartheta)$ as $\rho\to 0$ as proved in Theorem \ref{Thm1-1}.

%{\color{blue}Add some remarks here.}
%

\begin{theorem}\label{TH1-1}
Let $\lambda(\omega,\rho)$ denote the principal eigenvalue of \eqref{Liu1}.
Suppose that $\rho=\rho(\omega)\in C^1((0,\infty))$,
$\rho'(\omega)\geq 0$, and $\left[\rho(\omega)/\omega^2\right]'\leq 0$ %$\left[\frac{d(\omega)}{\omega^2}\right]'\leq 0$
in \eqref{Liu1}. Then $\lambda(\omega,\rho(\omega))$ is non-decreasing in $\omega$.   %Then
% $\Lambda'(\omega) \geq 0$ for  all $\omega>0$. %, and either $\Lambda'(\omega)> 0$ for all $\omega>0$, %$\omega>0$,  or $ \Lambda'(\omega)\equiv 0$.
%  Furthermore, %he followings hold.
% %\begin{itemize}
% %  \item [\rm{(i)}] If $\rho'(\omega)>0$, then $\Lambda'(\omega)= 0$ if and only if
% % ${\bf A}={\bf A}(t)$  is independent of $x\in\Omega$;
%    % \item [\rm{(ii)}]
%     if $\rho'(\omega)= 0$, then $(\lambda(\omega,\rho(\omega)))'= 0$ if and only if ${\bf A}{\bm \phi}=\widehat{\bf A}(x){\bm \phi}+g(t){\bm \phi}$ for all $(x,t)\in\Omega\times\mathbb{R}$ with some periodic function $g$, where ${\bm \phi}={\bm \phi}(x)$ denotes the principal eigenfunction of elliptic problem \eqref{liu-20240227-1}.
%\end{itemize}
In particular, for each fixed $\rho>0$, $\omega\mapsto\lambda(\omega, \rho)$ is non-decreasing. % in $\omega>0$.
 % either $\partial\lambda(\omega, \rho)/\partial \omega> 0$ for all $\omega>0$, or $\partial\lambda(\omega, \rho)/\partial \omega\equiv 0$.
\end{theorem}

%\subsection{Monotonicity of the  principal eigenvalve on frequency}

\begin{comment}
Assume that ${\bf A}=(a_{ij})_{n\times n}$ with some time-periodic function $a_{ij}\in C(\Omega\times \mathbb{R})$.
%We define
\begin{theorem}\label{TH2}
Let $\lambda(\omega,\rho)$ be the principal eigenvalue of problem \eqref{Liu1}. Then %there holds
$\lambda(\omega, \rho)\leq  \lambda_{\rm geom}(\rho)$ for all $\omega\geq 0$.
Here
 $\lambda_{\rm geom}(\rho)$ denotes the principal eigenvalue of the elliptic problem
\begin{equation}\label{liu-20231222-3}
% \begin{cases}
% \left\{
% \begin{array}{ll}
% \medskip
-\rho {\bf D}\Delta{\bm \varphi}-\langle {\bf A} \rangle{\bm \varphi}=\lambda{\bm \varphi} \,\,\,\,\mathrm{in}~\,\,\Omega,\qquad
  \nabla {\bm \varphi}\cdot\nu =0\,\,\,\, \mathrm{on}~\,\,\partial\Omega,
  % \end{array}
  % \right.
 %\end{cases}
 \end{equation}
 where  $\langle {\bf A} \rangle=\langle {\bf A} \rangle(x)$ is the time-independent  nonnegative matrix  with entries:
\begin{equation}\label{liu-20231222-2}
\begin{split}
    \langle {\bf A} \rangle_{ii}(x)&=\int_0^1 a_{ii}(x,t){\rm d}t,  \quad \,\,1\leq i\leq n, \\
\langle {\bf A} \rangle_{ij}(x)&=\lim_{\varepsilon\searrow 0}\exp(\int_0^1 (\log (a_{ij}(x,t)+\varepsilon)) {\rm d}t), \quad i\neq j, \,\,\,\,1\leq i,j\leq n.
\end{split}
\end{equation}
\end{theorem}
\end{comment}

Theorem \ref{TH1-1} generalizes the results presented in \cite[Theorem 1.1]{LL2022} for the scalar periodic-parabolic case and in \cite[Theorem 1.1]{LLS2022} for the spatially homogeneous case. Our proof offers a more straightforward and simplified approach in contrast to those in \cite{LL2022,LLS2022}, based on the equality provided in Lemma \ref{L2}. The necessary and sufficient condition on the strict monotonicity is detailed  in Theorem \ref{TH1}.  We refer to \cite{FLRX2024} for a related result on the
time-periodic nonlocal dispersal cooperative systems. However,  the question of  monotonicity in  situations where matrix ${\bf A}$ is not necessarily symmetric remains open.

% Theorem %\ref{TH2}
% shows that when comparing heterogeneous environments with
% averaged environments, heterogeneities tend to decrease the generalized principal
% eigenvalues, provided the geometric mean is used for the off-diagonal entries of
% ${\bf A}$.   It generalizes the result in \cite{HSV2000} on
% the scalar periodic-parabolic case.

% An interesting consequence of
%  Theorems \ref{TH-liu-20240106} and \ref{TH1-1} is the following monotonicity result. %implication on the monotone dependence of critical value %$C(\vartheta)$
% % of %Hamilton-Jacobi
% %equation
% %\eqref{Liu01-20210716}.

% \begin{corollary}\label{coro-20240602}
%   The  critical value $C(\vartheta)$ of time-periodic Hamilton-Jacobi
% equation  \eqref{Liu01-20210716} is non-decreasing in $\vartheta>0$.
% \end{corollary}
 % The monotonicity in Corollary \ref{coro-20240602} is potentially of interest in understanding the effect of temporal heterogeneity on ``effect Hamiltonians" as studied in \cite{EG2001,EG2002}, where the asymptotic behaviors of $C(\vartheta)$ for more general time-periodic Hamiltonian  in \eqref{Liu01-20210716} are discussed.

  \subsection{\bf %Topological structures of level sets
  Main results II: level sets and applications} %In what follows,
As applications of
the above results, we are able to  %characterize
%classify
provide a complete classification of
the level sets for  principal eigenvalue of problem \eqref{Liu1} as a function of frequency $\omega$  and diffusion rate $\rho$  in Theorem \ref{liu-levelset}.
It identifies  five distinct types of topological structures for the level sets, which   is a more intricate finding compared to the results in % is more complicated than the findings in
\cite[Theorem 1.3]{LL2022} concerning the scalar time-periodic parabolic operators, where only two different topological structures were found. An interesting consequence of
 Theorem \ref{liu-levelset} is the implication on the non-monotone dependence of principal eigenvalue
 on the diffusion rate $\rho$ as shown in Corollary \ref{liucor-1}. This is in strong contrast to the time-independent scenario,  wherein the principal eigenvalue is non-decreasing in %diffusion rate
 $\rho$. To shorten the introduction, we  move the materials for Theorem \ref{liu-levelset} to section \ref{Sect.5} and refer to Fig. \ref{liufig2} for illustrations.

In what follows, we shall apply Theorem \ref{liu-levelset} to system \eqref{liu-20240724-1} and characterize the parameter regions for the persistence and
extinction of  phenotypes. Assume the time-periodic mutation matrix ${\bf M}$ is  essentially positive and fully coupled, then all phenotypes will interact with each other and the persistence of each phenotype  is completely determined by the principal eigenvalue $\lambda(\omega,\rho)$ of system \eqref{Liu1} with ${\bf A}={\bf M}+{\rm diag}(c_i)$. In particular, %for each $\varepsilon>0$,
the persistence region for problem \eqref{liu-20240724-1} can be defined as %follows:
 $$
 \mathbb{E}:=\{(\omega,\rho)\in\mathbb{R}_+^2: \,\,%\, \omega,\rho>0, \quad
 \lambda(\omega,\rho)<0\},
 $$
 where $\mathbb{R}_+=(0,+\infty)$. This means that all phenotypes of the species will persist when $(\omega,\rho)\in \mathbb{E}$, whereas all phenotypes will become extinction  when $(\omega,\rho)\not\in \mathbb{E}$.
As a direct consequence of Theorem \ref{liu-levelset}, the persistence region  $\mathbb{E}$ can be characterized
 %can be stated
 as follows.
\begin{corollary}\label{liu-levelset-1}
Let $\underline{C}\leq C_*\leq \underline{C}^+,C_*^+\leq \overline{C}$ be defined in \eqref{def_underlineC} and \eqref{def_underlineC2} with ${\bf A}= {\bf M}+{\rm diag}(c_i)$ (see also Lemma {\rm \ref{liu-20240526}}). Then $\mathbb{E}=\emptyset$ if $\underline{C}\geq 0$, and $\mathbb{E}=\mathbb{R}_+^2$ if $\overline{C}\leq 0$. Otherwise, if $\underline{C}<0<\overline{C}$, then
there exists uniquely a continuous function $\overline\omega:{\rm dom}(\overline\omega)\mapsto (0,+\infty)$ such that
$$\mathbb{E}=\{(\omega,\rho)\in \mathbb{R}_+^2: \, \omega<\overline\omega(\rho), \,\, \,\rho\in {\rm dom}(\overline\omega) \}.$$
 The domain ${\rm dom}(\overline\omega)$ and
  asymptotic behaviors of function $\overline\omega$  can be characterized as follows.

% where the domain ${\rm dom}(\omega_o)$ of function $\omega_o$ is defined as
% \begin{equation*}
%     {\rm dom}(\omega_o)=\left\{\begin{array}{ll}
%     \smallskip
%      (0,\rho_\ell],    & \text{for } \ell \in (\underline{C}, \min\{C_*^+,\underline{C}^+\}),\\
%       \smallskip
%       (\underline{\rho}_\ell,\rho_\ell],    &  \text{for }  \ell \in (C_*^+, \underline{C}^+) \text{ and } C_*^+< \underline{C}^+,\\
%       \smallskip
%       (0, +\infty), &  \text{for }  \ell \in [\underline{C}^+, C_*^+) \text{ and } \underline{C}^+<C_*^+,\\
%        (\underline{\rho}_\ell, +\infty), &  \text{for }  \ell \in (\max\{C_*^+,\underline{C}^+\}, \overline{C}),
%     \end{array}\right.
% \end{equation*}
% with $0<\underline{\rho}_\ell<\rho_\ell$ being  defined  by Lemma {\rm\ref{d-ell}}.

\begin{itemize}
    \item[{\rm(1)}] If
    $\underline{C}<0<C_*$, then
    $ {\rm dom}(\overline\omega)= (0,\overline\rho)$ for some $\overline\rho>0$, and
 $$\overline\omega(\rho)\to 0 \quad\text{as } \,\, \rho\searrow 0 \text{ and } \rho\nearrow \overline{\rho}.$$

%\smallskip

\item[{\rm(2)}] If $C_*<0<\min\{C_*^+,\underline{C}^+\})$, then
    $ {\rm dom}(\overline\omega)= (0,\overline\rho)$ for some $\overline\rho>0$, and %there holds
%     function
% $\overline\omega$ satisfies
$$\overline\omega(\rho)\to \underline{h}^{-1}(0)>0 \text{ as } \rho\searrow 0 \quad \text{ and } \quad \overline\omega(\rho)\to 0 \text{ as } \rho\nearrow \overline{\rho}.$$
%$\overline\omega(\overline\rho)=0$, $\overline\omega(\rho)>0$ for $\rho\in(0,\overline\rho)$, and
% $\overline\omega(\rho)\to \underline{h}^{-1}(0)$ as $\rho\to 0$,
%where function $\underline{h}$ is defined in Theorem {\rm\ref{Bai-He-2020}}.
 %Furthermore, if the assumption holds, then  $w'_\ell(0)\leq 0$.
%\end{itemize}

%\smallskip
\item [{\rm(3)}]  If $C_*^+<0<\underline{C}^+$, then
$ {\rm dom}(\overline\omega)=(\underline{\rho},\overline\rho)$ for some $0<\underline{\rho}<\overline\rho$, and
$$\overline\omega(\rho)\to +\infty \text{ as } \rho\searrow \underline{\rho} \quad \text{ and } \quad \overline\omega(\rho)\to 0 \text{ as } \rho\nearrow \overline{\rho}.$$
%function
%$\overline\omega$
%satisfies
% $\overline\omega(\overline\rho)=0$, $\overline\omega(\rho)\to +\infty$ as $\rho\to\underline{\rho}$, and $\overline\omega(\rho)>0$ for $\rho\in(\underline{\rho},\rho_\ell)$.

% \smallskip
\item [{\rm(4)}]   If $\underline{C}^+<0<C_*^+$,then $ {\rm dom}(\overline\omega)= (0,+\infty)$, and
$$\overline\omega(\rho)\to \underline{h}^{-1}(0)>0 \text{ as } \rho\searrow 0 \quad \text{ and } \quad \overline\omega(\rho)\to \overline{h}^{-1}(0)>0  \text{ as } \rho\nearrow +\infty.$$
%$\omega_\ell(\rho)>0$, % for all $\rho\in (0,+\infty)$,
% $\omega_\ell(\rho)\to \underline{h}^{-1}(\ell)$ as $\rho\to 0$,  and $\omega_\ell(\rho)\to \overline{h}^{-1}(\ell)$ as $\rho\to +\infty$.

%\smallskip
\item[{\rm (5)}] If $\max\{C_*^+,\underline{C}^+\}<0< \overline{C}$, then $ {\rm dom}(\overline\omega)=(\underline{\rho}, +\infty)$, and
$$\overline\omega(\rho)\to +\infty \text{ as } \rho\searrow \underline{\rho} \quad \text{ and } \quad \overline\omega(\rho)\to \overline{h}^{-1}(0)>0 \text{ as } \rho\nearrow +\infty.$$
\end{itemize}
Here  functions $\underline{h}(\omega)$ and $\overline{h}(\omega)$ are defined in Theorem {\rm\ref{Bai-He-2020}} and Proposition {\rm\ref{TH-liu-20240429}}, respectively, both of which are non-decreasing and satisfy $\overline{h}^{-1}(0)<\underline{h}^{-1}(0)$.
\end{corollary}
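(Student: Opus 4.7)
The plan is to read off Corollary~\ref{liu-levelset-1} as the zero-sublevel slice of the topological classification established in Theorem~\ref{liu-levelset}. First, I would observe that by definition
\[
\mathbb{E} = \{(\omega,\rho) \in \mathbb{R}_+^2 : \lambda(\omega,\rho) < 0\}
\]
is a strict sublevel set of $\lambda$. By Theorem~\ref{TH1-1}, $\omega \mapsto \lambda(\omega,\rho)$ is non-decreasing for every fixed $\rho > 0$, so each vertical fiber $\mathbb{E} \cap (\mathbb{R}_+ \times \{\rho\})$ is either empty, all of $\mathbb{R}_+$, or an interval $(0, \overline{\omega}(\rho))$. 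Strict monotonicity (Theorem~\ref{TH1}) gives uniqueness of $\overline{\omega}(\rho)$ whenever it is finite, and joint continuity of $\lambda$ gives continuity of $\overline{\omega}$ on its domain. Thus the entire corollary reduces to describing, in each of the five regimes, the set ${\rm dom}(\overline\omega)$ and the behavior of $\overline\omega$ at its two endpoints.

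Next, I would collect the boundary values of $\lambda$ on $\mathbb{R}_+^2$ already supplied by the preceding theorems. Near the corner $(\omega,\rho) = (0,0)$, Theorem~\ref{TH-liu-20240106} yields $\lambda \to C(\vartheta)$ with $C(\vartheta)$ continuous in $\vartheta = \omega/\sqrt{\rho}$, interpolating between $\underline{C}$ (at $\vartheta \to 0$) and $C_*$ (at $\vartheta \to +\infty$). On the $\omega$-axis, Theorem~\ref{Bai-He-2020} gives $\lambda \to \underline{h}(\omega)$ as $\rho \searrow 0$; dually, Proposition~\ref{TH-liu-20240429} supplies $\lambda \to \overline{h}(\omega)$ as $\rho \nearrow +\infty$. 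The remaining constants $\underline{C}^+, C_*^+, \overline{C}$ from~\eqref{def_underlineC2} are identified by Lemma~\ref{liu-20240526} as the two double limits at the opposite corner $(\omega, \rho) \sim (0, +\infty)$ and the global supremum of $\lambda$ on $\mathbb{R}_+^2$, respectively.

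The five cases then correspond exactly to the five positions in which the value $0$ can be interlaced with the ordered chain $\underline{C} \leq C_* \leq \min\{\underline{C}^+, C_*^+\} \leq \max\{\underline{C}^+, C_*^+\} \leq \overline{C}$; the degenerate cases $\underline{C} \geq 0$ and $\overline{C} \leq 0$ are immediate. For each non-degenerate case I would trace, using the boundary limits collected above together with the matching topological type from Theorem~\ref{liu-levelset}, where the zero curve $\{\lambda = 0\}$ touches the boundary of the parameter quadrant. For example in case~(1), $C_* > 0$ forces the curve to come down to $\omega = 0$ as $\rho \searrow 0$ along directions with $\omega/\sqrt{\rho} \to +\infty$, yielding $\overline\omega(\rho) \to 0$; $\underline{C} < 0$ ensures the curve does exist for small $\rho > 0$; and $\min\{C_*^+, \underline{C}^+\} > 0$ together with $\overline{h} \geq \underline{h}$ forces the curve to terminate at a finite $\overline\rho$ with $\overline\omega(\rho) \to 0$. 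Cases~(2)–(5) are handled analogously by matching which among $\underline{h}(\omega) = 0$, $\overline{h}(\omega) = 0$, and the corner values produce the endpoints.

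The main obstacle is organizational rather than computational: in each case one must verify that $\mathbb{E}$ really is a single connected region attached to the $\omega = 0$ axis — equivalently, that $\{\lambda < 0\}$ contains no isolated pocket separated from the $\omega$-axis — so that the graph description $\mathbb{E} = \{\omega < \overline\omega(\rho)\}$ is correct. This is precisely the nontrivial content of Theorem~\ref{liu-levelset}'s topological classification, most visibly in case~(3), where ${\rm dom}(\overline\omega) = (\underline\rho, \overline\rho)$ is an interior strip and one must exclude additional components of $\{\lambda < 0\}$ outside it. Once the level set topology has been pinned down to the appropriate one of the five types provided by Theorem~\ref{liu-levelset}, the limiting values of $\overline\omega$ at the endpoints follow by continuity of $\lambda$ and the boundary-limit formulas, and the proof is complete.
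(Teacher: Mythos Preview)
Your proposal is correct and matches the paper's approach: the paper states Corollary~\ref{liu-levelset-1} as ``a direct consequence of Theorem~\ref{liu-levelset}'' without a separate proof, and your argument spells out precisely how that deduction goes --- by combining the monotonicity of $\omega\mapsto\lambda(\omega,\rho)$ (Theorem~\ref{TH1-1}) with the level-set classification of Theorem~\ref{liu-levelset} specialized to $\ell=0$. One minor remark: your concern about ``isolated pockets'' is slightly over-cautious, since monotonicity in $\omega$ already forces every nonempty vertical fiber of $\mathbb{E}$ to be an interval starting at $\omega=0$; the genuine input from Theorem~\ref{liu-levelset} is that ${\rm dom}(\overline\omega)$ is itself an interval with the stated endpoint behavior.
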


% Instead of \eqref{liu-20240801-1},  by Lemma \ref{liu-20240516} presented below below we can infer that
% \begin{equation*}
% \mathcal{E}_\varepsilon=\emptyset\,\, \,\text{ if } \,\,\,\underline{C}_\varepsilon\geq 0,\,\, \quad \,\,\mathcal{E}_\varepsilon=\mathbb{R}_+^2 \,\,\,\text{ if }\,\,\, \overline{C}_\varepsilon\leq 0,
% % \mathcal{E}=\left\{
% %     \begin{array}{ll}
% %     \medskip
% %        \emptyset,   & \text{if } \int_0^1\max_{x\in \overline{\Omega}}\mu({\bf A}(x,t)){\rm d}t\leq 0, \\
% %      \mathbb{R}_+^2, & \text{if } \mu(\overline{\widehat{\bf A}})\geq 0,
% %     \end{array}\right.
% \end{equation*}
% % \begin{itemize}
% %     \item[{\rm(1)}] If $\int_0^1\max_{x\in \overline{\Omega}}\mu({\bf A}(x,t)){\rm d}t\leq 0$, then $\mathcal{E}= \emptyset$;
% %     \smallskip
% %     \item[{\rm(2)}] If $\mu(\overline{\widehat{\bf A}})\geq 0$, then $\mathcal{E}= \mathbb{R}_+^2$,
% %  \end{itemize}
%  where
%  $\underline{C}_\varepsilon$ and $\overline{C}_\varepsilon$ are respectively defined by \eqref{def_underlineC} and \eqref{def_underlineC2} with ${\bf A}=\varepsilon {\bf M}+{\rm diag}(c_i)$. In the other case $\underline{C}_\varepsilon<0<\overline{C}_\varepsilon$, %besides the topological structures of  $\mathcal{E}$  in Fig,
%  on the basis of Theorem \ref{liu-levelset} % we can apply Theorem \ref{liu-levelset} to
 \begin{figure}[htb]
    \centering
    \includegraphics[width=1.0\linewidth]{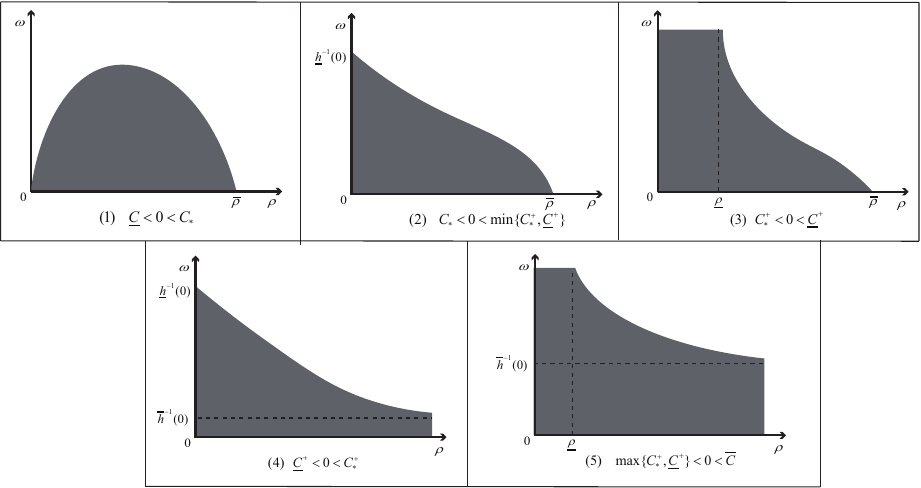}
    \caption{\small
    {Illustrations of the
persistence region $\mathbb{E}$ in the $\rho$-$\omega$ plane  for the essentially positive and fully coupled mutation matrix ${\bf M}$,
which is
marked by the shaded areas, while the blank areas correspond to the region where $\lambda(\omega,\rho)\geq 0$.
    } }
  \label{liufig-2}
   \end{figure}

 Corollary \ref{liu-levelset-1} suggests  that the persistence region $\mathbb{E}$ may exhibit five distinct types of  topological structures in the $\rho$-$\omega$ plane,  according to the combined effects of mutation matrix ${\bf M}$ and birth-death rate $c_i$.
These structures are illustrated by the shaded areas in Fig. \ref{liufig-2}, which are  separated with
blank areas  by the level set $\lambda(\omega,\rho)=0$ as characterized by Theorem \ref{liu-levelset}.  This in fact suggests much more complicated dynamics of \eqref{liu-20240724-1} compared to the scenario where ${\bf M}\equiv 0$,
indicating the absence of mutations in \eqref{liu-20240724-1}. Indeed, as aforementioned, when ${\bf M}\equiv 0$, phenotype $i$ can persist if and only if the principal eigenvalue $\lambda_i(\omega,\rho)$ for scalar problem \eqref{Liu-20240719-1} is of negative sign. According to the characterization of the level sets for $\lambda_i(\omega,\rho)$
in  \cite[Theorem 1.3]{LL2022},
only two distinct types of topological structures, specifically types (1) and (3) in Fig. \ref{liufig-2}, are observed for the persistence region $\mathbb{E}$ under different conditions on  the birth-death rate  $c_i$.
This finding suggests that our results in the present paper reveal some new phenomena,
which cannot be observed  from the decoupled system  without mutations. Therefore, the analysis of the principal eigenvalue for system \eqref{Liu1} can capture some intricate effects of mutations on the persistence of a species.

\subsection{Organization of the paper}
%This paper is organized as follows:
In section \ref{Sect.2}
we present some preliminary results on  asymptotic behaviors of the principal eigenvalue for large/small frequency or diffusion rate.
Section \ref{Sect.3} is devoted to  the asymptotic
analysis of the principal eigenvalue
near the origin  and establishing Theorem \ref{TH-liu-20240106}.
The monotone dependence of the principal eigenvalue   is studied in section \ref{monotonicity-section}, where Theorems \ref{liutheorem-0229-1} and \ref{TH1-1} are proved. Finally,  in section \ref{Sect.5} we classify the topological
structures of level sets for the principal eigenvalue and establish some non-monotone dependence of principal eigenvalue on diffusion rate.
%which proves Theorem \ref{liu-levelset} and Corollary  \ref{liucor-1}.

\medskip

\section{\bf Preliminaries}\label{Sect.2}
In this section, we prepare some asymptotic behaviors of principal eigenvalue for problem \eqref{Liu1}, which generalize some results on the scalar time-periodic parabolic eigenvalue problems as studied in \cite{Hutson2001,HSV2000,LLPZ2019,N2009}.
 Throughout the paper, for each $(x,t)\in\overline{\Omega}\times\mathbb{R}$,
we assume  ${\bf A}(x,t)=(a_{ij}(x,t))_{n\times n}$ is a
symmetric and essentially positive matrix, with each  $a_{ij}\in C(\overline{\Omega}\times\mathbb{R})$
being a time-periodic function with unit period.
% Inspired by \cite{BH2020,S1992}, we also assume
% ${\bf A}$ is fully coupled, i.e. the index set $\{1,\cdots,n\}$ cannot be split up in two disjoint nonempty
% sets $\mathcal{I}$ and $\mathcal{J}$ such that $a_{ij}(x, t)\equiv 0$ in $\Omega\times\mathbb{R}$ for $i\in\mathcal{I}$ and $j\in\mathcal{J}$.

The symmetric assumption on matrix ${\bf A}$ is not necessary throughout this section.

\begin{proposition}\label{TH-liu-20240227}
Let $\lambda(\omega,\rho)$ be the principal eigenvalue of problem \eqref{Liu1}. Then %the following assertions hold.
\begin{itemize}
    \item[{\rm(i)}] $\lambda(\omega,\rho)\to \overline{\lambda}(\rho)$ as $\omega\to+\infty$, where $\overline{\lambda}(\rho)$ denotes the principal eigenvalue of %the elliptic eigenvalue problem
\begin{equation}\label{liu-20240227-1}
-\rho {\bf D}\Delta {\bm \phi}-\widehat{\bf A}(x){\bm \phi}=\lambda{\bm \phi} \,\,\,\,\mathrm{in}~\,\,\,\Omega,\qquad
  \nabla  {\bm \phi}\cdot\nu =0~~\mathrm{on}\,\,\,~\partial\Omega,
 \end{equation}
 with $\widehat{\bf A}$ being the temporally averaged matrix with entries $(\widehat{\bf A})_{ij}(x)=\int_0^1 a_{ij}(x,t){\rm d}t$.
    \item[{\rm (ii)}] $\lambda(\omega,\rho)\to \underline{\lambda}(\rho):=\int_0^1 \lambda_0(t,\rho){\rm d}t$ as $\omega\to0$, where for any $\rho>0$ and $t\in\mathbb{R}$, $\lambda_0(t,\rho)$ denotes the principal eigenvalue of the elliptic eigenvalue problem
\begin{equation}\label{liu-20240227-2}
-\rho  {\bf D}\Delta {\bm \phi}-{\bf A}(x,t){\bm \phi}=\lambda{\bm \phi} \,\,\,\,\mathrm{in}~\,\,\,\Omega,\qquad
  \nabla  {\bm \phi}\cdot\nu =0~~\mathrm{on}\,\,\,~\partial\Omega.
 \end{equation}
\end{itemize}
\end{proposition}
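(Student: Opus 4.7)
The plan is to prove both statements by establishing compactness for the family of principal eigenfunctions and passing to the limit in the eigenvalue equation. For fixed $\rho>0$, I would normalize the principal eigenfunction ${\bm\varphi}_\omega$ of \eqref{Liu1} by $\|{\bm\varphi}_\omega\|_{L^\infty(\Omega\times[0,1])}=1$. Uniform (in $\omega$) upper and lower bounds on $\lambda(\omega,\rho)$ of the order $-\max_{(x,t)}\mu({\bf A}(x,t))\lesssim \lambda(\omega,\rho)\lesssim -\min_{(x,t)}\mu({\bf A}(x,t))$ come from testing the eigenvalue equation against the nonnegative eigenvector associated with $\mu({\bf A})$ and invoking the positive-eigenfunction characterization of the principal eigenvalue established in \cite{BH2020,S1992}.

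For part (i), the large-$\omega$ limit, I would divide the equation by $\omega$ and apply interior and boundary Schauder estimates to the parabolic system (whose coefficients are bounded uniformly in $\omega$) to obtain uniform $C^{2+\alpha,1+\alpha/2}$ bounds on ${\bm\varphi}_\omega$; dividing additionally exhibits $\partial_t{\bm\varphi}_\omega=O(\omega^{-1})$, so any limit along a subsequence $\omega_k\to\infty$ is $t$-independent, say ${\bm\varphi}_\infty(x)$, with $\|{\bm\varphi}_\infty\|_\infty=1$. Integrating the original equation over $t\in[0,1]$ annihilates the $\omega\partial_t$ term by periodicity and yields
\begin{equation*}
-\rho{\bf D}\Delta\bar{\bm\varphi}_{\omega_k}(x)-\int_0^1{\bf A}(x,t){\bm\varphi}_{\omega_k}(x,t)\,{\rm d}t=\lambda(\omega_k,\rho)\bar{\bm\varphi}_{\omega_k}(x),
\end{equation*}
where $\bar{\bm\varphi}_{\omega_k}=\int_0^1{\bm\varphi}_{\omega_k}\,{\rm d}t$. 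Passing to the limit, together with the strong maximum/Harnack principle ensuring ${\bm\varphi}_\infty>0$, yields the eigenvalue equation for $\overline\lambda(\rho)$ in \eqref{liu-20240227-1}; by uniqueness of the principal eigenpair, $\lambda(\omega_k,\rho)\to\overline\lambda(\rho)$ along the full family.

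For part (ii), the small-$\omega$ limit, the naive compactness argument is insufficient: a limit ${\bm\varphi}_0(\cdot,t)$ would be, for each $t$, a nonnegative eigenfunction of \eqref{liu-20240227-2}, forcing $\lim\lambda(\omega_k,\rho)=\lambda_0(t,\rho)$ for every $t$, which is inconsistent whenever $\lambda_0(t,\rho)$ genuinely depends on $t$. The resolution is the multiplicative ansatz ${\bm\varphi}_\omega(x,t)\approx \alpha(t)\bm\psi_t(x)$, where $\bm\psi_t$ is the positive principal eigenfunction of the $t$-slice problem \eqref{liu-20240227-2} normalized by $\int_\Omega|\bm\psi_t|^2\,{\rm d}x=1$ (depending smoothly on $t$ via continuity of ${\bf A}(\cdot,t)$). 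Substituting into \eqref{Liu1} and taking the inner product with $\bm\psi_t$ (for which $\int_\Omega\bm\psi_t\cdot\partial_t\bm\psi_t\,{\rm d}x=0$ by normalization) produces the scalar ODE
\begin{equation*}
\omega\,\alpha'(t)=\bigl(\lambda(\omega,\rho)-\lambda_0(t,\rho)\bigr)\alpha(t)+O(\omega)\alpha(t).
\end{equation*}
Periodicity $\alpha(1)=\alpha(0)$ then forces $\int_0^1(\lambda(\omega,\rho)-\lambda_0(t,\rho))\,{\rm d}t=O(\omega)$, so $\lambda(\omega,\rho)\to\int_0^1\lambda_0(t,\rho)\,{\rm d}t=\underline\lambda(\rho)$. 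To make this rigorous I would promote the ansatz to a super-/sub-solution argument: for any $\ep>0$, the trial functions $\exp\bigl(\omega^{-1}\int_0^t(\Lambda-\lambda_0(s,\rho))\,{\rm d}s\bigr)\bm\psi_t(x)$ with $\Lambda=\underline\lambda(\rho)\pm\ep$ are $1$-periodic and, for $\omega$ small, satisfy the appropriate super/sub-eigenfunction inequalities; the positive-eigenfunction characterization of $\lambda(\omega,\rho)$ then traps it between $\underline\lambda(\rho)-\ep$ and $\underline\lambda(\rho)+\ep$.

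The hard part is clearly part (ii): the $\omega\to0$ limit is singular because the time derivative degenerates while the slice spectrum $\lambda_0(t,\rho)$ varies with $t$, so the proof must resolve a boundary layer in $\omega$ via explicit super- and sub-solutions and requires smooth $t$-dependence of $\bm\psi_t$, which itself hinges on simplicity of $\lambda_0(t,\rho)$ and implicit-function-type regularity for the slice problem \eqref{liu-20240227-2}. Part (i) is comparatively routine once uniform parabolic regularity is in place.
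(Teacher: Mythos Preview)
Your Part~(ii) is essentially the paper's proof: the paper constructs exactly the trial function $\exp\bigl\{\omega^{-1}(\underline\lambda(\rho)t-\int_0^t\lambda_0(s,\rho)\,{\rm d}s)\bigr\}{\bm\phi}(x,t)$ with ${\bm\phi}(\cdot,t)$ the slice eigenfunction of \eqref{liu-20240227-2}, computes that the defect is $\omega\,\partial_t\log\phi_i$, and applies the comparison principle of \cite{BH2020} to trap $\lambda(\omega,\rho)$ in $[\underline\lambda(\rho)-\ep,\underline\lambda(\rho)+\ep]$ for small $\omega$. So both your identification of (ii) as the harder part and your method for it match the paper.

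For Part~(i), however, your Schauder step has a gap. Dividing by $\omega$ gives $\partial_t{\bm\varphi}-(\rho/\omega){\bf D}\Delta{\bm\varphi}=\omega^{-1}({\bf A}+\lambda){\bm\varphi}$; the diffusion coefficient $\rho/\omega$ degenerates as $\omega\to\infty$, so parabolic Schauder constants blow up and you do \emph{not} get uniform $C^{2+\alpha,1+\alpha/2}$ bounds this way. (Equivalently, under the natural rescaling $\tau=t/\omega$ the diffusion is fixed but ${\bf A}(x,\omega\tau)$ loses uniform H\"older regularity in $\tau$.) The paper sidesteps this by an $L^2$ energy argument: normalize $\sum_i\int_0^1\!\int_\Omega\varphi_i^2=1$, test against $\varphi_i$ to bound $\sum_i\int_0^1\!\int_\Omega|\nabla\varphi_i|^2$, then test against $\partial_t\varphi_i$ to obtain $\sum_i\int_0^1\!\int_\Omega|\partial_t\varphi_i|^2\le M/\omega$. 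This yields $H^1$ compactness and $\partial_t\varphi_i\to 0$ in $L^2$, after which the weak limit ${\bm\phi}(x)$ is shown (by testing and then integrating in $t$) to solve \eqref{liu-20240227-1}. Your route can be salvaged by using $W^{2,1}_p$ parabolic estimates (which require only $L^\infty$ coefficients) after the rescaling $\tau=t/\omega$, but as written the uniform-regularity claim is unjustified.
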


\begin{proof}
{\it Step 1}. We first prove part {\rm (i)}.   The proof can follow by the ideas in  \cite[Lemma 3.10]{N2009} concerning % which is established for
scalar space-time periodic eigenvalue problems. We present the details for completeness. Let ${\bm \varphi}=(\varphi_1,\cdots,\varphi_n)>0$ be the principal eigenfunction of \eqref{Liu1} associated with $\lambda(\omega,\rho)$, which can be normalized by $\sum_{i=1}^n \int_0^1\!\int_\Omega\varphi^2_i=1$. Recall that ${\bf A}(x,t)=(a_{ij}(x,t))_{n\times n}$ with $1$-periodic function $a_{ij}\in C(\Omega\times\mathbb{R})$.  Multiplying both sides of \eqref{Liu1} by $\varphi_i$ and integrating the resulting equation over $\Omega\times(0,1)$, we have
   \begin{align*}
    \rho \sum_{i=1}^n d_i\int_0^1\!\!\!\int_\Omega |\nabla\varphi_i|^2=\lambda(\omega,\rho)+\sum_{i,j=1}^n \int_0^1\!\!\!\int_\Omega a_{ij}\varphi_i\varphi_j. %\leq  \rho \sum_{i=1}^n d_i\int_0^1\!\!\!\int_\Omega |\nabla\varphi_i|^2+M.
   \end{align*}
   Hence, by the uniform boundedness of $\lambda(\omega,\rho)$ we derive that
   \begin{equation}\label{liu-20240315-1}
       \sum_{i=1}^n\int_0^1\!\!\!\int_\Omega |\nabla\varphi_i|^2\leq M/\rho, \quad \forall \omega,\rho>0.
   \end{equation}
   Hereafter $M$  denotes some  positive constant, which may vary from line to line but is always
   independent of $\omega$ and $\rho$. % and it can change in different places.
We next multiply both sides of \eqref{Liu1} by $\partial_t\varphi_i$ and integrate the resulting equation over $\Omega\times(0,1)$, then % and obtain
   \begin{align*}
    \omega\sum_{i=1}^n \int_0^1\!\!\!\int_\Omega |\partial_t \varphi_i|^2=-\frac{\rho}{2}\sum_{i=1}^n d_i\int_0^1\!\!\!\int_\Omega \partial_t(|\nabla \varphi_i|^2)+\sum_{i,j=1}^n \int_0^1\!\!\!\int_\Omega  a_{ij}\varphi_j \partial_t \varphi_i=\sum_{i,j=1}^n \int_0^1\!\!\!\int_\Omega a_{ij}\varphi_j \partial_t \varphi_i,
   \end{align*}
   which yields the following estimate
   \begin{equation}\label{liu-20240315-2}
     \sum_{i=1}^n \int_0^1\!\!\!\int_\Omega |\partial_t \varphi_i|^2\leq M/\omega, \quad \forall \omega,\rho>0.
   \end{equation}

 Combining \eqref{liu-20240315-1} and \eqref{liu-20240315-2},  we observe that for each $i=1,\cdots, n$, function $\varphi_i$ is bounded in $H^1(\Omega\times(0,1))$ and $\partial_t \varphi_i \to 0$ in $L^2(\Omega\times(0,1))$ as $\omega\to +\infty$. Hence, by passing to a subsequence if necessary, we may assume that $\varphi_i \to \phi_i$ strongly in $L^2(\Omega\times(0,1))$ and weakly in $H^1(\Omega\times(0,1))$ for some $\phi_i\in H^1(\Omega\times(0,1))$ satisfying $\sum_{i=1}^n \int_0^1\!\!\int_\Omega\phi^2_i=1$. Then it follows by \eqref{liu-20240315-2} that
 $$ \sum_{i=1}^n \int_0^1\!\!\!\int_\Omega |\partial_t \phi_i|^2\leq \liminf_{\omega\to 0}\sum_{i=1}^n \int_0^1\!\!\!\int_\Omega |\partial_t \varphi_i|^2=0,$$
 and thus $\phi_i=\phi_i(x)$ is  independent of $t$ variable.

 Assume that $\lambda(\omega,\rho)\to \overline{\lambda}(\rho)$ for some $\overline{\lambda}(\rho)\in\mathbb{R}$ as $\omega\to+\infty$ by passing to a subsequence if necessary. It suffices to show that $\overline{\lambda}(\rho)$ is the principal eigenvalue of  \eqref{liu-20240227-1}. To this end,  multiply both sides of \eqref{Liu1} by any test function in $H^1(\Omega\times(0,1))$ and
 let $\omega\to+\infty$ in  the resulting equation,  then we see that %for each $t\in \mathbb{R}$,
 ${\bm \phi}=(\phi_1,\cdots,\phi_n)$ is a weak solution  of  the equation
 \begin{equation}\label{liu-20240531-1}
     -\rho {\bf D}\Delta{\bm \phi}-{\bf A}(x,t){\bm \phi}=\overline{\lambda}(\rho){\bm \phi},\quad  \,\,\forall (x,t)\in \Omega\times\mathbb{R}.
 \end{equation}
 Due to ${\bm \phi}={\bm \phi}(x)$ is independent of $t$,
 we integrate \eqref{liu-20240531-1} with respect to $t$ over $(0,1)$, % and obtain
 then
$$-\rho {\bf D}\Delta{\bm \phi}-\widehat{\bf A}(x){\bm \phi}=\overline{\lambda}(\rho){\bm \phi},\quad  \,\,\forall (x,t)\in \Omega\times\mathbb{R}.$$
In view of ${\bm \phi}>0$, by the uniqueness of principal eigenvalue of problem \eqref{liu-20240227-1}, we conclude the desired result.  Step 1 is complete.

{\it Step 2.} We prove part {\rm(ii)}. For each $t\in \mathbb{R}$, denote by ${\bm \phi}=(\phi_1,\cdots,\phi_n)>0$  the principal eigenfunction of \eqref{liu-20240227-2}  corresponding to principal eigenvalue $\lambda_0(t,\rho)$. %For any $x\in\overline{\Omega}$, it is easily seen  that  $u_0(x,\cdot), \nabla u_0(x,\cdot)\in C^{1}([0,1])$ and $u_0(x,t+1)=u_0(x,t)$.
Set
$$\underline{\bm \varphi}(x,t):=\exp\left\{\frac{1}{\omega}\left(\underline{\lambda}(\rho) t-\int_0^t \lambda_0(s,\rho){\rm d}s\right)\right\}{\bm \phi}(x,t),$$
which is positive and $1$-periodic in $t$. Set $\underline{\bm \varphi}:=(\underline{\varphi}_1,\cdots,\underline{\varphi}_n)$ for $1$-periodic function $\underline{\varphi}_i\in C^{2,1}(\Omega\times\mathbb{R})$. % $i=1,\cdots,n$.
By \eqref{liu-20240227-2}, direct calculations yield $\nabla\underline{\varphi}_i \cdot\nu =\underline{\varphi}_i \nabla \log\phi_i\cdot \nu=0$ for all $x\in \partial\Omega$ and %$i=1,\cdots,n$, and
\begin{align*}\label{liu-20240325-1}
    &\omega\partial_t\underline{\varphi}_i-\rho d_i\Delta\underline{\varphi}_i-\sum_{j=1}^n a_{ij}\underline{\varphi}_j\nonumber\\
    =&\Big[\underline{\lambda}(\rho)-\lambda_0(t,\rho)+\omega \frac{\partial_t \phi_i}{\phi_i}-\rho d_i\frac{\Delta \phi_i}{\phi_i}-\sum_{j=1}^n a_{ij}\frac{\phi_j}{\phi_i}\Big]\underline{\varphi}_i\\
     =&\left(\underline{\lambda}(\rho)+\omega \partial_t \log \phi_i\right)\underline{\varphi}_i, \qquad \forall (x,t)\in\Omega\times\mathbb{R},\,\,\,i=1,\cdots,n.\nonumber
\end{align*}
% and
% \begin{equation}\label{liu-20240325-2}
%     \nabla\underline{\varphi}_i \cdot\nu =\underline{\varphi}_i \nabla \phi_i\cdot \nu=0 \quad \text{for all } x\in \partial\Omega.
% \end{equation}
Hence, %together with \eqref{liu-20240325-1} and \eqref{liu-20240325-2},
for any given $\epsilon>0$, we can choose $\omega$ small such that
\begin{equation}\label{fr4}
\left\{
\begin{array}{ll}
\medskip
\omega\partial_t\underline{\bm \varphi}-\rho {\bf D}\Delta\underline{\bm \varphi}-{\bf A}\underline{\bm \varphi}\leq(\underline{\lambda}(\rho)+\epsilon)\underline{\bm \varphi} &\mathrm{in}~\Omega\times\mathbb{R},\\
\medskip
\omega\partial_t\underline{\bm \varphi}-\rho {\bf D}\Delta\underline{\bm \varphi}-{\bf A}\underline{\bm \varphi}\geq(\underline{\lambda}(\rho)-\epsilon)\underline{\bm \varphi}
&\mathrm{in}~\Omega\times\mathbb{R},\\
% &\omega\partial_t\varphi_2-D_2\Delta\varphi_2-a_{21}(x,t)\varphi_1-a_{22}(x,t)\varphi_2=\lambda(\omega,\rho)\varphi_2~~\mathrm{in}~\Omega\times [0,+\infty),\\
\medskip
  \nabla \underline{\bm \varphi}\cdot\nu =0 & \mathrm{on}~\partial\Omega\times\mathbb{R},\\
  \underline{\bm \varphi}(x,t)=\underline{\bm \varphi}(x,t+1), &\mathrm{in}~\Omega\times\mathbb{R}.
  \end{array}
  \right.
\end{equation}
 Then we apply the comparison principle
established by \cite[section 2]{BH2020} to \eqref{fr4} and obtain %Apply the comparison argument as in the proof of Lemma 15.4 in \cite{Hess} to derive
\begin{equation*}%\label{fr_sup}
% \int_{0}^{1}\lambda^0(s)\mathrm{d}s-\epsilon\leq\liminf_{\tau\rightarrow0}\lambda(\tau)\leq
 \underline{\lambda}(\rho)-\epsilon\leq \liminf_{\omega\rightarrow0}\lambda(\omega,\rho)\leq \limsup_{\omega\rightarrow0}\lambda(\omega,\rho)\leq \underline{\lambda}(\rho)+\epsilon,
\end{equation*}
which proves part {\rm (ii)} by letting $\epsilon\to 0$. The proof is complete.
\end{proof}

As a complement to  Theorem \ref{Bai-He-2020}, we next  consider the asymptotic behavior of principal eigenvalue $\lambda(\omega,\rho)$ for large diffusion rate $\rho$.
\begin{proposition}\label{TH-liu-20240429}
For each $\omega>0$, let $\overline{h}(\omega)$ be the principal eigenvalue of the problem
\begin{equation}\label{liu-20240525-3}
    \omega\frac{{\rm d}{\bm\phi}}{{\rm d} t}-\overline{\bf A}(t){\bm\phi}= h{\bm\phi},
\qquad
  {\bm\phi}(t)={\bm\phi}(t+1), \,\,\,\,t\in\mathbb{R},
\end{equation}
where $\overline{\bf A}$ is the  spatially averaged matrix with entries $(\overline{\bf A})_{ij}(t)=\frac{1}{|\Omega|}\int_\Omega a_{ij}(x,t){\rm d}x$.
%Let $\lambda(\omega,\rho)$ be the principal eigenvalue of problem \eqref{Liu1}.
Then % there holds
$$\lim_{\rho\to +\infty}\lambda(\omega,\rho)=\overline{h}(\omega). %:=h (\overline{\bf A},\omega),
$$
  Moreover, if the off-diagonal entries of ${\bf A}$ are independent of $x$  variable, then $\lambda(\omega,\rho)\leq \overline{h}(\omega)$ for all $\omega,\rho>0$, i.e. $\lambda(\omega,\rho)$ attains its global maximal at $\rho=+\infty$ for each $\omega>0$.
\end{proposition}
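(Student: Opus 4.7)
For part~(i), I would adapt the argument of Proposition~\ref{TH-liu-20240227}(i), interchanging the roles of space and time. Let ${\bm\varphi}>0$ be the principal eigenfunction of \eqref{Liu1} associated with $\lambda(\omega,\rho)$, normalized by $\sum_i\int_0^1\!\int_\Omega\varphi_i^2=1$. Testing \eqref{Liu1} against $\varphi_i$ and integrating over $\Omega\times(0,1)$ yields the uniform bound
$$
\rho\sum_{i=1}^n d_i\int_0^1\!\!\!\int_\Omega|\nabla\varphi_i|^2\,dx\,dt\leq M,
$$
so that $\sum_i\int_0^1\!\int_\Omega|\nabla\varphi_i|^2\to 0$ as $\rho\to+\infty$. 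A complementary test against $\partial_t\varphi_i$ gives a uniform $H^1(\Omega\times(0,1))$ estimate. By weak compactness, one extracts a limit ${\bm\phi}$, strongly in $L^2$ and weakly in $H^1$, with $\nabla\phi_i\equiv 0$, hence $\phi_i=\phi_i(t)$. Averaging the eigenvalue equation over $\Omega$ and passing to the limit (using strong $L^2$ convergence $\varphi_j\to\phi_j$) recovers
$$
\omega\,\phi_i'(t)-\sum_j\overline{a}_{ij}(t)\,\phi_j(t)=\lambda^*\,\phi_i(t),
$$
where $\lambda^*=\lim\lambda(\omega,\rho)$. Since ${\bm\phi}$ is nontrivial and nonnegative, uniqueness of the principal eigenvalue of \eqref{liu-20240525-3} forces $\lambda^*=\overline{h}(\omega)$.

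For part~(ii), my plan is to exploit the $x$-independence of off-diagonal entries via a logarithmic transformation combined with Jensen's inequality. Setting $\varphi_i=e^{w_i}$, dividing \eqref{Liu1} by $\varphi_i$, and integrating over $\Omega\times(0,1)$, the $\partial_t w_i$ term vanishes by $t$-periodicity and the $\Delta w_i$ term by Neumann boundary conditions, leaving
$$
-\rho d_i\int_0^1\!\!\!\int_\Omega|\nabla w_i|^2\,dx\,dt-\sum_j\int_0^1\!\!\!\int_\Omega a_{ij}(x,t)\,e^{w_j-w_i}\,dx\,dt=\lambda\,|\Omega|.
$$
For $j\neq i$, $a_{ij}(t)\geq 0$ is $x$-independent; Jensen's inequality gives $\int_\Omega e^{w_j-w_i}\,dx\geq|\Omega|\,e^{\overline{w}_j-\overline{w}_i}$ with $\overline{w}_i(t):=|\Omega|^{-1}\!\int_\Omega w_i(x,t)\,dx$. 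Together with $\int_\Omega a_{ii}\,dx=|\Omega|\,\overline{a}_{ii}(t)$ on the diagonal, this yields the componentwise bound
$$
\lambda\leq-\int_0^1\sum_j\overline{a}_{ij}(t)\,e^{\overline{w}_j(t)-\overline{w}_i(t)}\,dt\qquad(i=1,\dots,n).
$$
Introducing $y_i(t):=e^{\overline{w}_i(t)}>0$, this reads $\lambda\leq-\int_0^1(\overline{\bf A}(t)\,{\bm y}(t))_i/y_i(t)\,dt$ for every $i$. To conclude $\lambda\leq\overline{h}(\omega)$, I would combine the $n$ componentwise bounds with suitable positive weights---specifically, a weighted combination using the positive eigenfunction ${\bm\psi}(t)$ of \eqref{liu-20240525-3} together with the symmetry $\overline{\bf A}^T=\overline{\bf A}$ and the periodicity of ${\bm\psi}$---and appeal to a Collatz-Wielandt type identity to identify the weighted bound with $\overline{h}(\omega)$.

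The main obstacle is precisely this last step. In the scalar case $n=1$ the reduction is immediate since the bound collapses to $-\int_0^1\overline{a}(t)\,dt=\overline{h}(\omega)$, but for $n\geq 2$ the integrals $-\int_0^1(\overline{\bf A}{\bm y})_i/y_i\,dt$ generally differ across $i$, so one must carefully exploit the symmetric structure of the time-periodic cooperative ODE. A plausible route is to multiply the $i$-th inequality by $\psi_i(t)y_i(t)$, sum over $i$, and use integration by parts together with $\overline{\bf A}^T=\overline{\bf A}$ and the periodicity of ${\bm\psi}$; the symmetry should make the resulting cross-terms collapse to $\overline{h}(\omega)\sum_i\psi_i y_i$, thereby closing the estimate.
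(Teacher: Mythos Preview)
Your Part~(i) is sound and close in spirit to the paper, though the paper argues slightly differently: instead of extracting a weak $H^1$ limit, it writes $\varphi_i=f_i+\tfrac{1}{|\Omega|}\int_\Omega\varphi_i$, uses Poincar\'e to get $\|f_i\|_{L^2}^2\le M/\rho$, integrates the PDE over $\Omega$ to obtain an ODE for $\int_\Omega\varphi_i$ with an $O(\rho^{-1/2})$ remainder, and then pairs this with the \emph{adjoint} ODE eigenfunction ${\bm\psi}$ of \eqref{liu-20240525-3}. This yields $(\overline h(\omega)-\lambda)\sum_i\int_0^1\psi_i\!\int_\Omega\varphi_i=O(\rho^{-1/2})$, and a short dichotomy argument finishes. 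Your compactness route is more in line with the proof of Proposition~\ref{TH-liu-20240227}(i) and works equally well; the paper's pairing argument is a bit more direct and avoids discussing positivity of the limit.

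For Part~(ii) you have the right ingredients (logarithmic transform plus Jensen for the $x$-independent off-diagonal terms), but you integrate over $\Omega\times(0,1)$ too early. Killing the $\partial_t w_i$ term leaves you with $n$ scalar inequalities $\lambda\le -\int_0^1(\overline{\bf A}{\bf y})_i/y_i\,dt$ whose combination is indeed awkward, and your ``plausible route'' of multiplying the $i$-th inequality by $\psi_i(t)y_i(t)$ no longer makes sense once those inequalities are already numbers rather than functions of $t$.

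The fix is simply to integrate only over $\Omega$ and \emph{retain} the time derivative. Writing $y_i(t):=\exp\bigl(\tfrac{1}{|\Omega|}\int_\Omega\log\varphi_i(x,t)\,dx\bigr)$, the same computation gives, pointwise in $t$,
\[
\omega\,\frac{y_i'}{y_i}=\lambda+\frac{\rho d_i}{|\Omega|}\int_\Omega\frac{|\nabla\varphi_i|^2}{\varphi_i^2}\,dx+\frac{1}{|\Omega|}\sum_j\int_\Omega a_{ij}\frac{\varphi_j}{\varphi_i}\,dx
\;\ge\;\lambda+\sum_j(\overline{\bf A})_{ij}(t)\,\frac{y_j}{y_i},
\]
the last step using nonnegativity of the gradient term and Jensen on the off-diagonal (exactly as you wrote). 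Thus ${\bf y}(t)>0$ is a $1$-periodic \emph{super-solution} of \eqref{liu-20240525-3}, and the comparison principle for cooperative periodic systems (e.g.\ \cite[Sect.~2]{BH2020}) gives $\lambda(\omega,\rho)\le\overline h(\omega)$ immediately. This is precisely the paper's argument. Your pairing-with-$\psi$ idea is in fact just the standard proof of that comparison principle, applied to the differential inequality above rather than to its time average.
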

\begin{proof}
{\it Step 1}. We first establish the limit of $\lambda(\omega,\rho)$ as $\rho\to+\infty$.    Let ${\bm \varphi}=(\varphi_1,\cdots,\varphi_n)>0$, normalized by $\sum_{i=1}^n \int_0^1\!\int_\Omega\varphi^2_i=1$, be the principal eigenfunction of problem \eqref{Liu1}. Set
    \begin{equation}\label{liu-20240509-1}
       f_i(x,t):=\varphi_i(x,t)-\frac{1}{|\Omega|}\int_\Omega \varphi_i(x,t){\rm d}x, \qquad i=1,\cdots,n.
    \end{equation}
    Then $\int_\Omega f_i(x,t){\rm d}x=0$ for all $t\in\mathbb{R}$ and $1\leq i\leq n$. %Hence, b
    By the Poincar\'e’s inequality,  there exists some constant
$M>0$ depending only on $\Omega$ such that $\int_\Omega f_i^2 \leq M  \int_\Omega |\nabla f_i|^2$. Note from \eqref{liu-20240509-1} that $\nabla f_i=\nabla \varphi_i$, whence by  \eqref{liu-20240315-1} we derive that
\begin{equation}\label{liu-20240509-2}
       \sum_{i=1}^n\int_0^1\!\!\!\int_\Omega f_i^2\leq M/\rho, \qquad i=1,\cdots,n.
   \end{equation}

% namely
% \begin{equation}\label{liu-20240509-5}
%     \frac{1}{|\Omega|}\sum_{j=1}^n v_j(t)\int_\Omega a_{ij}(x,t){\rm d}x=\mu(\overline{\bf A}(t)) v_i(t), \quad i=1,\cdots,n.
% \end{equation}
% Set $\tilde{\varphi}_i(x,t):=\varphi_i(x,t)/v_i(t)$ for all $(x,t)\in\Omega\times \mathbb{R}$. By  \eqref{Liu1} we calculate that
% $$\omega \sum_{i=1}^n v_i(t)\partial_t\varphi_i-\rho \sum_{i=1}^n d_i v_i(t) \Delta\varphi_i-\sum_{i,j=1}^n a_{ij}v_i(t)\varphi_j=\sum_{i=1}^n v_i(t)\varphi_i\quad\text{in }\Omega\times \mathbb{R}.$$
%Set ${\bf A}(x,t)=(a_{ij}(x,t))_{n\times n}$ with $1$-periodic function $a_{ij}\in C(\Omega\times\mathbb{R})$.
Integrate \eqref{Liu1} over $\Omega$ and substitute $\varphi_i=f_i+\frac{1}{|\Omega|}\int_\Omega \varphi_i$ into the resulting equation, then %we obtain
\begin{equation}\label{liu-20240509-3}
    \omega \partial_t \int_\Omega \varphi_i-\sum_{j=1}^n (\overline{\bf A})_{ij}(t)\int_\Omega \varphi_j=\lambda(\omega,\rho)\int_\Omega \varphi_i+\sum_{j=1}^n \int_\Omega a_{ij} f_j, \quad i=1,\cdots,n.
\end{equation}
Multiplying both sides of \eqref{liu-20240509-3} by $\int_\Omega \varphi_i$ and summing over index $i$ from $1$ to $n$, then by \eqref{liu-20240509-2} and  the boundedness of matrix $\overline{\bf A}$, %it follows from \eqref{liu-20240509-2} and \eqref{liu-20240509-3} that
we derive that
\begin{equation}\label{liu-20240513-1}
    \frac{\omega}{2} \partial_t\left[\sum_{i=1}^n(\int_\Omega \varphi_i)^2\right]\leq(M+\lambda(\omega,\rho))\sum_{i=1}^n(\int_\Omega \varphi_i)^2+O(\rho).
\end{equation}

%Recall that $\overline{h} (\omega)$ denotes the principal eigenvalue of the problem
The adjoint problem of \eqref{liu-20240525-3} can be written as
\begin{equation}\label{liu-20240509-6}
    -\omega\frac{{\rm d}{\bm\phi}}{{\rm d} t}-\overline{\bf A}(t){\bm\phi}= h{\bm\phi},
\qquad
  {\bm\phi}(t)={\bm\phi}(t+1), \,\,\,\,t\in\mathbb{R}.
\end{equation}
Denote by  ${\bm\phi}(t)=(\phi_1(t),\cdots,\phi_n(t))>0$  the principal eigenfunction of \eqref{liu-20240509-6} corresponding to $\overline{h} (\omega)$.
Then multiplying both sides of  \eqref{liu-20240509-3} by ${\bm \phi}$ and
integrating the resulting equation over $(0,1)$ yield
\begin{equation}\label{liu-20240509-4}
(\overline{h} (\omega)-\lambda(\omega,\rho))\sum_{i=1}^n \int_0^1 \left[\phi_i(t) \int_\Omega \varphi_i(x,t){\rm d}x\right]{\rm d}t= \sum_{i,j=1}^n\int_0^1 \left[\phi_i(t)\int_\Omega a_{ij} f_j \right]{\rm d}t.
\end{equation}
Hence, by \eqref{liu-20240509-2} we have either $\lambda(\omega,\rho)\to \overline{h} (\omega)$ or $\int_\Omega \varphi_i(x,t){\rm d}x\to 0$  in $L^1((0,1))$ as $\rho\to+\infty$. The former completes the proof. It suffices to show that the latter is impossible.

Suppose it holds, then we can derive from \eqref{liu-20240513-1} that $\int_\Omega \varphi_i(x,t){\rm d}x\to 0$  uniformly in  $[0,1]$ as $\rho\to+\infty$.  This together with \eqref{liu-20240509-1} and \eqref{liu-20240509-2}
implies that
 $$\lim_{\rho\to+\infty}\int_0^1\!\!\!\int_\Omega\varphi^2_i=0, \qquad i=1,\cdots,n. $$
This contradicts the normalization of ${\bm \varphi}$, so that the latter cannot hold.
% By \eqref{liu-20240509-2}, we can calculate from \eqref{liu-20240509-3} that
% \begin{equation}\label{liu-20240509-4}
%     \exp\left\{\frac{1}{\omega}\left[\lambda(\omega,\rho)t+\frac{1}{|\Omega|}\sum_{i=1}^n \int_0^t\!\!\int_\Omega a_{ij}\right]\right\}\int_\Omega \varphi_i(x,t){\rm d}x=\int_\Omega \varphi_i(x,0){\rm d}x+O(1/\rho).
% \end{equation}
Hence, $\lambda(\omega,\rho)\to \overline{h} (\omega)$ as $\rho\to+\infty$, which completes the proof.

{\it Step 2}. We assume that the off-diagonal entries of ${\bf A}$ are independent of $x$ and show $\lambda(\omega,\rho)\leq \overline{h}(\omega)$ for all $\omega,\rho>0$.
Recall that ${\bm \varphi}=(\varphi_1,\cdots,\varphi_n)>0$ is the principal eigenfunction of \eqref{Liu1} associated to $\lambda(\omega,\rho)$. Inspired by \cite{HSV2000}, we define
$$w_i(t):=\exp\left(\frac{1}{|\Omega|}\int_\Omega \log \varphi_i(x,t){\rm d}x\right),\quad i=1,\cdots,n.$$
% Direct calculations yield
% \begin{align}\label{liu-20240530-1}
%     \frac{\Delta w_i}{w_i}=\int_0^1 \frac{\Delta\varphi_i}{\varphi_i}{\rm d}t-\int_0^1 \frac{|\nabla\varphi_i|^2}{\varphi_i^2}{\rm d}t+\left|\int_0^1 \frac{\nabla\varphi_i}{\varphi_i}{\rm d}t\right|^2\leq \int_0^1 \frac{\Delta\varphi_i}{\varphi_i}{\rm d}t.
% \end{align}
Recall  ${\bf A}(x,t)=(a_{ij}(x,t))_{n\times n}$. % with $1$-periodic function $a_{ij}\in C(\Omega\times\mathbb{R})$,
By our assumption,  $a_{ij}=a_{ij}(t)$ is independent of $x$ for all $i\neq j$.
By \eqref{Liu1},
direct calculations yield
 \begin{align}\label{liu-20240530-1}
     \omega\frac{{\rm d}\log w_i}{{\rm d}t}=&
     \frac{1}{|\Omega|}\int_\Omega \frac{\omega\partial_t \varphi_i}{\varphi_i} {\rm d}x\notag\\
     =& \lambda(\omega,\rho)+
     \frac{\rho d_i}{|\Omega|}\int_\Omega \frac{\Delta\varphi_i}{\varphi_i}{\rm d}x+\frac{1}{|\Omega|} \sum_{j=1}^n  \int_\Omega \frac{a_{ij}\varphi_j}{\varphi_i} {\rm d}x\notag\\
     =& \lambda(\omega,\rho)+
     \frac{\rho d_i}{|\Omega|}\int_\Omega \frac{|\nabla\varphi_i|^2}{\varphi_i^2}{\rm d}x+\frac{1}{|\Omega|} \sum_{j=1}^n  \int_\Omega \frac{a_{ij}\varphi_j}{\varphi_i} {\rm d}x%\frac{1}{|\Omega|}   \int_\Omega a_{ii}(x,t) {\rm d}x+\frac{1}{|\Omega|} \sum_{j\neq i}  a_{ij}(t) \int_\Omega \frac{\varphi_j}{\varphi_i} {\rm d}x
     \\
     \geq & \lambda(\omega,\rho)+\frac{1}{|\Omega|} \sum_{j=1}^n  \int_\Omega \frac{a_{ij}\varphi_j}{\varphi_i} {\rm d}x. %\frac{1}{|\Omega|}   \int_\Omega a_{ii}(x,t) {\rm d}x%+\frac{1}{|\Omega|} \sum_{j\neq i}  a_{ij}(t) \int_\Omega \frac{\varphi_j}{\varphi_i} {\rm d}x
     \notag
 \end{align}
% By \eqref{liu-20240530-1}, we derive from \eqref{Liu1} that
% \begin{equation}\label{liu-20231225-1}
% \begin{split}
%     \frac{d_i\Delta w_i}{w_i}&\leq \int_0^1 \frac{1}{\varphi_i}\left[\omega\partial_t\varphi_i+\sum_{j=1}^n a_{ij}\varphi_j\right]{\rm d}t-\lambda(\omega,\rho)\\
%     &=\sum_{j\neq i}\int_0^1 \frac{a_{ij}\varphi_j}{\varphi_i}{\rm d}t+\int_0^1 a_{ii}(x,t){\rm d} t-\lambda(\omega,\rho).
%     \end{split}
%     \end{equation}
 By the definition of  spatially averaged matrix $\overline{\bf A}$, % in \eqref{liu-20231222-2},
  using the Jensen’s inequality one has
  \begin{align*}
      \frac{1}{|\Omega|} \sum_{j=1}^n  \int_\Omega \frac{a_{ij}\varphi_j}{\varphi_i} {\rm d}x&=(\overline{\bf A})_{ii}+\frac{1}{|\Omega|} \sum_{j\neq i}  a_{ij}(t) \int_\Omega \frac{\varphi_j}{\varphi_i} {\rm d}x\\
      &= (\overline{\bf A})_{ii}+\frac{1}{|\Omega|} \sum_{j\neq i}  a_{ij}(t) \int_\Omega \exp(\log \varphi_j-\log \varphi_i){\rm d}x\\
      &\geq (\overline{\bf A})_{ii}+ \sum_{j\neq i}  a_{ij}(t)  \exp\left[\frac{1}{|\Omega|}\int_\Omega\log \varphi_j{\rm d}x-\frac{1}{|\Omega|}\int_\Omega\log \varphi_i{\rm d}x\right]\\
       &=\sum_{j=1}^n\frac{(\overline{\bf A})_{ij} w_j}{w_i}.
  \end{align*}
 %   \begin{align*}
 %    \sum_{j\neq i}\int_0^1 \frac{a_{ij}\varphi_j}{\varphi_i}{\rm d}t &=\lim_{\epsilon\searrow 0}\sum_{j\neq i}\int_0^1 \exp(\log(a_{ij}+\epsilon)+\log \varphi_j-\log \varphi_i){\rm d}t\\
 %    &\leq\lim_{\epsilon\searrow 0}\sum_{j\neq i}\exp \left[\int_0^1 \log(a_{ij}+\epsilon){\rm d}t+\int_0^1\log \varphi_j{\rm d}t-\int_0^1\log \varphi_i{\rm d}t\right]\\
 %    &=\sum_{j\neq i}\frac{\langle {\bf A}\rangle_{ij} w_j}{w_i}.
 % \end{align*}
This together with \eqref{liu-20240530-1} gives
\begin{align*}
     \omega\frac{{\rm d}w_i}{{\rm d}t} -\sum_{j= 1}^n (\overline{\bf A})_{ij} w_j\geq \lambda(\omega,\rho)w_i \quad \text{ for  }\,\,t\in\mathbb{R},\,\,\,i=1,\cdots,n,
\end{align*}
%Due to $\nabla w_i\cdot \nu=0$ on $\partial\Omega$, we find that
and thus ${\bf w}=(w_1,\cdots,w_n)$ is a super-solution of problem \eqref{liu-20240525-3}. By comparison (see  Proposition 2.4 of \cite{BH2020}), we derive $\lambda(\omega,\rho)\leq {\overline{h}}(\omega)$ for all $\omega>0$. The proof is complete.
\end{proof}

We conclude this section by stating a direct corollary of Proposition \ref{TH-liu-20240429}. % can stated as follows.
\begin{corollary}\label{cor-liu-20240429}
Let ${\bf B}(x)=(b_{ij}(x))_{n\times n}$ be any essentially positive and irreducible time-independent matrix. % on $\Omega$.
For any $\rho>0$, let $\lambda(\rho)$ be the principal eigenvalue of the elliptic problem
\begin{equation}\label{liu-20240530-2}
% \begin{cases}
-\rho {\bf D}\Delta{\bm \varphi}-{\bf B}(x){\bm \varphi}=\lambda{\bm \varphi} \, \text{ in }\, \Omega,\quad
  \nabla {\bm \varphi}\cdot\nu =0 \, \text{ on }\, \partial\Omega.
 %\end{cases}
 \end{equation}
 Then %there holds
 $\lambda(\rho)\to\mu (\overline{\bf B})$ as $\rho\to +\infty$,
%$$\lim_{\rho\to +\infty}\lambda(\rho)=\mu (\overline{\bf B}), $$
where  $\overline{\bf B}$ is the  matrix with entries $(\overline{\bf B})_{ij}=\frac{1}{|\Omega|}\int_\Omega b_{ij}(x){\rm d}x$.
\end{corollary}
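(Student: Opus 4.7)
The plan is to treat this corollary as a direct specialization of Proposition \ref{TH-liu-20240429}. The key observation is that when the coefficient matrix in \eqref{Liu1} is taken to be time-independent, namely ${\bf A}(x,t) = {\bf B}(x)$, the parabolic principal eigenvalue $\lambda(\omega,\rho)$ of \eqref{Liu1} collapses to the elliptic principal eigenvalue $\lambda(\rho)$ of \eqref{liu-20240530-2}. Indeed, if $(\lambda(\rho), {\bm \phi}(x))$ is the Krein--Rutman principal eigenpair of \eqref{liu-20240530-2}, then ${\bm \phi}$, regarded as independent of $t$, is automatically a positive time-periodic eigenfunction of \eqref{Liu1} with eigenvalue $\lambda(\rho)$. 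By uniqueness of the principal eigenvalue for \eqref{Liu1}, this forces $\lambda(\omega,\rho) = \lambda(\rho)$ for every $\omega > 0$.

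With this reduction in hand, I would invoke Proposition \ref{TH-liu-20240429} to conclude
$$\lambda(\rho) = \lambda(\omega,\rho) \longrightarrow \overline{h}(\omega) \qquad \text{as } \rho \to +\infty,$$
where $\overline{h}(\omega)$ is the principal eigenvalue of the periodic ODE \eqref{liu-20240525-3} associated with the spatially averaged matrix. Since ${\bf A}(x,t) = {\bf B}(x)$ carries no $t$ dependence, the spatial average appearing in \eqref{liu-20240525-3} is precisely the constant matrix $\overline{\bf B}$ of the corollary's statement; essential positivity and irreducibility are preserved under this averaging, since off-diagonal averages remain nonnegative and the fully-coupled pattern survives. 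Hence the ODE \eqref{liu-20240525-3} reduces to the autonomous system $\omega {\bm \phi}' - \overline{\bf B} {\bm \phi} = h {\bm \phi}$ with $1$-periodic ${\bm \phi}$.

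Finally, I would compute $\overline{h}(\omega)$ directly. The Perron--Frobenius theorem, applied to $\overline{\bf B}$, furnishes a positive eigenvector ${\bm \phi}_0$ with $\overline{\bf B}\,{\bm \phi}_0 = \mu(\overline{\bf B})\,{\bm \phi}_0$. Regarding ${\bm \phi}_0$ as constant in $t$, it solves the autonomous ODE with $h = -\mu(\overline{\bf B})$, and by uniqueness of the principal eigenvalue of the ODE one concludes $\overline{h}(\omega) \equiv -\mu(\overline{\bf B})$, independent of $\omega$. Combining the three steps yields the asserted limit (matching the statement up to the sign convention adopted there). There is no substantive obstacle: the entire argument is a transparent specialization of Proposition \ref{TH-liu-20240429}, and the only details to verify are that the time-independent Perron eigenfunction does indeed supply the \emph{principal} eigenpair of both the elliptic problem and the limiting autonomous ODE, both of which follow from Perron--Frobenius together with uniqueness of the Krein--Rutman eigenvalue.
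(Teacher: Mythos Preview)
Your proposal is correct and follows exactly the paper's approach: the paper's proof is two sentences stating that setting ${\bf A}={\bf B}(x)$ in \eqref{Liu1} reduces it to \eqref{liu-20240530-2}, whence the result is a direct consequence of Proposition \ref{TH-liu-20240429}. You simply supply the details the paper omits (why $\lambda(\omega,\rho)=\lambda(\rho)$ and why $\overline{h}(\omega)\equiv -\mu(\overline{\bf B})$), and you are right to flag the missing minus sign in the statement---the paper's own subsequent applications of this corollary use $-\mu(\overline{\bf B})$.
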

\begin{proof}
 Let ${\bf A}={\bf B}(x)$ be independent of $t$ in \eqref{Liu1}, then problem \eqref{Liu1} is equivalent to \eqref{liu-20240530-2}. Hence, Corollary \ref{cor-liu-20240429} is
 a direct consequence of Proposition \ref{TH-liu-20240429}.
\end{proof}
%\medskip
%\newpage

\section{\bf Asymptotic analysis near the origin}\label{Sect.3}

In this section, we are concerned with the asymptotic behaviors of the principal eigenvalue when both diffusion rate and frequency approach to zero and establish Theorem \ref{TH-liu-20240106}.
To this end, we first consider
the following asymptotic regime. %the case $\omega/\sqrt{\rho}\to +\infty$.
%The followingestimate plays a critical  role in the analysis.

\begin{proposition}\label{liuprop-2}
Let
$\lambda(\omega, \rho)$ denote the principal eigenvalue of \eqref{Liu1}.
Then %there holds
$$
\lim_{(\omega,\rho)\to (0,0) \atop \frac{\omega}{\sqrt{\rho}}\to +\infty}\lambda(\omega, \rho)=C_*=-\max_{x\in \overline{\Omega}}\int_0^1\mu({\bf A}(x,t)){\rm d}t.
$$
% Let $\Lambda(\tau, \epsilon)$ denote the principal eigenvalue of the problem
% \begin{equation}\label{liu1-1}
% \begin{cases}
% \begin{array}{ll}
% \smallskip
% \tau\sqrt{\epsilon}\partial_{t}\varphi-\epsilon D\Delta\varphi-{\bf A}(x,t)\varphi=\Lambda \varphi  &\text{in }\,\,\Omega\times \mathbb{R},\\
% \smallskip
% \nabla \varphi\cdot\nu =0 &\text{on }\,\partial\Omega\times \mathbb{R},\\
% \varphi(x,t)=\varphi(x,t+1) &\text{in }\,\,\Omega\times \mathbb{R}.
% \end{array}
% \end{cases}
% \end{equation}
% If $\tau\sqrt{\epsilon}\to 0$ as $\tau\to\infty$ and $\epsilon\to 0$, then  there holds
% $$\lim_{\epsilon\to 0  \atop \tau\to\infty}\Lambda(\omega,\rho)=\underline{C}=-\max_{x\in \overline{\Omega}} \int_0^1 \mu (x,t) {\rm d}t,$$
% whereas %for any $(x,t)\in\Omega\in \mathbb{R}$,
% $\mu (x,t)$ is the principal eigenvalue of ${\bf A}(x,t)$.
% % \begin{equation}\label{liu-20231230-2}
% %     \partial_t v+ {\bf A}(x,t) v=\mu (x) v, \quad  v(x,t)=v(x,t+1), \quad \forall t\in\mathbb{R}.
% % \end{equation}
\end{proposition}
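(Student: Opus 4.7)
The plan is to squeeze $\lambda(\omega,\rho)$ between an upper bound from a concentrated test function and a lower bound from a supersolution built out of the ODE principal eigenfunction in \eqref{liu-20240317-1}. Set $M:=-C_*=\max_{x\in\overline\Omega}\int_0^1\mu({\bf A}(x,t))\,{\rm d}t$, pick $x_0\in\Omega$ achieving the maximum (the case $x_0\in\partial\Omega$ being a straightforward adaptation), and let $L:=\omega\partial_t-\rho{\bf D}\Delta-{\bf A}(x,t)$. Because ${\bf A}(x,t)$ is symmetric, essentially positive and fully coupled, its maximal eigenvalue $\mu({\bf A}(x,t))$ is simple, with a continuous family of positive unit eigenvectors $\bm{\xi}(x,t)\in\mathbb{R}^n$. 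Two Collatz--Wielandt-type facts will be used: a positive $1$-periodic $\bm{\Psi}$ satisfying $L\bm{\Psi}\leq\Lambda\bm{\Psi}$ forces $\lambda(\omega,\rho)\leq\Lambda$, and a positive $1$-periodic $\bm{\Psi}$ satisfying $L\bm{\Psi}\geq\Lambda\bm{\Psi}$ forces $\lambda(\omega,\rho)\geq\Lambda$; both follow from the maximum principle and adjoint-eigenfunction pairing in the fully-coupled cooperative setting of \cite{BH2020,S1992}.

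For the upper bound $\limsup\lambda(\omega,\rho)\leq C_*$, let $\eta_\delta\in C_c^\infty(\Omega)$ be a standard nonnegative cutoff supported in $B_\delta(x_0)$ with $|\Delta\eta_\delta|=O(\delta^{-2})$, and let $\psi>0$ be the unique positive $1$-periodic solution of $\omega\psi'(t)=[\mu({\bf A}(x_0,t))+C_*]\psi(t)$, which exists because $\int_0^1[\mu({\bf A}(x_0,t))+C_*]\,{\rm d}t=0$. Setting $\bm{\varphi}(x,t):=\eta_\delta(x)\psi(t)\bm{\xi}(x,t)$ and expanding $L\bm{\varphi}$ via ${\bf A}\bm{\xi}=\mu({\bf A})\bm{\xi}$ produces, on $\mathrm{supp}\,\eta_\delta$,
$$L\bm{\varphi}=C_*\bm{\varphi}+[\mu({\bf A}(x_0,t))-\mu({\bf A}(x,t))]\bm{\varphi}+\omega\eta_\delta\psi\,\partial_t\bm{\xi}-\rho{\bf D}\bigl(\Delta\eta_\delta\,\psi\bm{\xi}+2\nabla\eta_\delta\cdot\psi\nabla\bm{\xi}+\eta_\delta\psi\Delta\bm{\xi}\bigr),$$
whose excess over $C_*\bm{\varphi}$ is $O(\delta+\omega+\rho/\delta^2)\bm{\varphi}$. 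Choosing $\delta=\rho^{1/4}$ gives $\lambda(\omega,\rho)\leq C_*+o(1)$; notice that this bound does not require any hypothesis on $\omega/\sqrt{\rho}$.

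The lower bound $\liminf\lambda(\omega,\rho)\geq C_*$ is where the regime $\omega/\sqrt{\rho}\to+\infty$ is essential. For each $x\in\overline\Omega$ let $\bm{\phi}(t;x,\omega)>0$ be the $1$-periodic principal eigenfunction of \eqref{liu-20240317-1} with eigenvalue $h(x,\omega)$, normalized to depend $C^2$-smoothly on $x$. The leading-order WKB representation $\phi_i(t;x,\omega)=\theta_i(t;x)\exp[\omega^{-1}S(t;x)][1+O(\omega)]$, with the common scalar phase $S(t;x):=\int_0^t[\mu({\bf A}(x,s))+h(x,\omega)]\,{\rm d}s$ (itself $1$-periodic since $h(x,\omega)\to-\int_0^1\mu({\bf A}(x,s))\,{\rm d}s$) and smooth positive unit eigenvector $\bm{\theta}(t;x)$ of ${\bf A}(x,t)$, yields the quantitative bound $\sup_{x,t,i}|\Delta_x\phi_i/\phi_i|\leq C\omega^{-2}$. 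Using $\bm{\Phi}(x,t):=\bm{\phi}(t;x,\omega)$ as supersolution test function gives $(L\bm{\Phi})_i=h(x,\omega)\phi_i-\rho d_i\Delta_x\phi_i\geq[\underline{h}(\omega)-C\rho/\omega^2]\phi_i$, so the sup Collatz--Wielandt formula yields $\lambda(\omega,\rho)\geq\underline{h}(\omega)-C\rho/\omega^2$. Combined with $\underline{h}(\omega)\to C_*$ as $\omega\to 0$ (\cite[Theorem 2.1]{LLS2022}) and $\rho/\omega^2\to 0$ under the regime assumption, the lower bound follows.

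The main obstacle is twofold. First, the quantitative WKB estimate must be justified carefully: as $\omega\to 0$ the eigenfunction $\bm{\phi}(t;x,\omega)$ becomes concentrated in $t$ near the maxima of $\mu({\bf A}(x,\cdot))$, and the $\omega^{-1}$ prefactor in the phase $S$ makes its $x$-derivatives grow like $\omega^{-1}$ and its $x$-Laplacian like $\omega^{-2}$; a rigorous treatment writes $\bm{\phi}=e^{S/\omega}\bm{\theta}(1+\omega\bm{r})$ and controls the transport-equation remainder $\bm{r}$ uniformly in $(x,t)$. Second, $\bm{\phi}(t;x,\omega)$ need not satisfy $\nabla_x\bm{\phi}\cdot\nu=0$ on $\partial\Omega$; this can be remedied by an even reflection of ${\bf A}$ across $\partial\Omega$ followed by restriction, or by multiplying by a smooth factor $e^{\gamma(x)}$ with $\nabla\gamma\cdot\nu$ prescribed on $\partial\Omega$, each of which adds only $O(\rho)$ to the supersolution residual and is harmlessly absorbed by the $C\rho/\omega^2$ correction already present.
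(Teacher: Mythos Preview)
Your upper-bound argument has a genuine gap. The Collatz--Wielandt characterization you invoke requires a strictly positive test function, but $\bm{\varphi}=\eta_\delta\psi\bm{\xi}$ vanishes outside $B_\delta(x_0)$. Worse, the pointwise inequality $L\bm{\varphi}\le (C_*+o(1))\bm{\varphi}$ fails near $\partial\,\mathrm{supp}\,\eta_\delta$: there $\eta_\delta\approx 0$ while $-\rho d_i(\Delta\eta_\delta)\psi\xi_i$ is generically nonzero (and strictly positive where $\Delta\eta_\delta<0$), so the inequality cannot hold against a right-hand side that vanishes. Your bound ``$O(\rho/\delta^2)\bm{\varphi}$'' implicitly treats $\Delta\eta_\delta/\eta_\delta$ as $O(\delta^{-2})$, which is false for a compactly supported cutoff. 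Since the operator is not self-adjoint (the $\omega\partial_t$ term), there is no Rayleigh-quotient substitute that would accept a merely nonnegative, compactly supported test function. A workable repair would use a globally positive concentrated profile of the form $e^{-v(x)/\sqrt{\rho}}$ (as in the paper's Lemma~\ref{liulem-4}) rather than a cutoff, but that changes the shape of the estimate and is not what you wrote.

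Your lower-bound route is genuinely different from the paper's. The paper works entirely through viscosity solutions: it sets $W_i=-\sqrt{\rho}\log\varphi_i(x,\omega t/\sqrt{\rho})$, passes to half-relaxed limits $U_*,\overline{U}_*$, and shows these are viscosity super- and sub-solutions of scalar Hamilton--Jacobi equations from which both inequalities follow. Your supersolution built from the ODE eigenfunction $\bm{\phi}(t;x,\omega)$ is more elementary and constructive, and the identity $(L\bm{\Phi})_i=h(x,\omega)\phi_i-\rho d_i\Delta_x\phi_i$ is clean. But everything rests on the uniform WKB bound $\sup_{x,t,i}|\Delta_x\phi_i/\phi_i|\le C\omega^{-2}$, which you only sketch: the Perron eigenvector $\bm{\xi}(x,t)$ of ${\bf A}(x,t)$ need not be $C^2$ in $x$ (the paper's ``fully coupled'' hypothesis is global, not pointwise irreducibility, so pointwise simplicity of $\mu({\bf A}(x,t))$ is not guaranteed), and even granting simplicity, controlling the transport remainder $\bm{r}$ uniformly as $\omega\to 0$ is real work. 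The paper's viscosity approach sidesteps all of these regularity questions at the cost of heavier PDE machinery; your approach, if the WKB step can be made rigorous, would be more direct and would give the quantitative rate $\lambda(\omega,\rho)\ge\underline{h}(\omega)-C\rho/\omega^2$.
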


\begin{proof}

Define ${\bm \varphi}=(\varphi_1,\cdots,\varphi_n)>0$, normalized by $\max_{1\leq i\leq n}\max_{\overline\Omega\times\mathbb{R}}\varphi_i=1$,  as the principal eigenvector of \eqref{Liu1} corresponding to $\lambda(\omega,\rho)$. Set  $$W_i(x,t;\omega,\rho):=-\sqrt{\rho} \log \varphi_i\left(x,\frac{\omega t}{\sqrt{\rho}}\right) \quad\text{ for all }(x,t)\in\Omega\times\mathbb{R},\,\, 1\leq i\leq n.$$
Then $\min_{1\leq i\leq n}\min_{\overline\Omega\times\mathbb{R}} W_i(x,t;\omega,\rho)=0$ for all $\omega, \rho>0$. By \eqref{Liu1}, direct calculations yield that for any $1\leq i\leq n$,
\begin{equation}\label{liu-25}
\begin{cases}
\begin{array}{ll}
 \partial_{t}W_i-\sqrt{\rho}d_i\Delta W_i+d_i |\nabla W_i|^2+\displaystyle\sum_{j=1}^n a_{ij}(x,\tfrac{\omega t}{\sqrt{\rho}})e^{\frac{W_i-W_j}{\sqrt{\rho}}}=-\lambda(\omega,\rho)  &\text{in }\,\,\Omega\times \mathbb{R},\\
%\smallskip
\nabla W_i\cdot\nu =0 &\text{on }\,\partial\Omega\times \mathbb{R}.
%W_d(x,t)= W_d(x,t+1) &\text{in }\,\,\Omega\times \mathbb{R}.
\end{array}
\end{cases}
\end{equation}
Note that $\lambda(\omega,\rho)$ is uniformly bounded for all $\omega,\rho>0$ (see also Lemma \ref{liu-20240516}).
By the same arguments as in \cite[Lemma 2.1]{ES1989}, we may construct suitable super-solutions
and apply the comparison principle to derive that %there exists  such that %for all $d>0$ and $\tau\geq 1$,
\begin{equation}\label{liu-24}
    \max_{1\le i\leq n}\max_{ \overline{\Omega}\times\mathbb{R}}%\left(|W_d|+|\nabla W_d|+|\partial_t W_d|\right)
    |W_i(x,t;\omega,\rho)| \leq C,\,\,
    \quad\forall \rho>0,\,\, \forall \omega/\sqrt{\rho}\geq 1
\end{equation}
for some constant $C$ independent of $\omega$ and $\rho$.
Given any  sequences $\{\omega_k\}_{k=1}^\infty$ and $\{\rho_k\}_{k=1}^\infty$ such that  $\omega_k,\rho_k\to 0$ and $\omega_k/\sqrt{\rho_k}\to+\infty$ as $k\to+\infty$, we assume  $\lambda(\omega_k,\rho_k)\to \lambda_\infty$ for some $\lambda_\infty\in\mathbb{R}$. % then \eqref{liu-28} implies $\lambda_\infty\leq \min_{ \overline{\Omega}}\hat{c}$.
By the arbitrariness of $\{\omega_k\}_{k=1}^\infty$ and $\{\rho_k\}_{k=1}^\infty$, it suffices to show  $\lambda_\infty=C_*$.
% \begin{equation}\label{liu-20240521-1}
%    \lambda_\infty= -\max_{x\in \overline{\Omega}}\int_0^1\mu({\bf A}(x,t)){\rm d}t.
% \end{equation}
The proof is separated  into the following two steps.

\smallskip
{\it Step 1}. We prove $\lambda_\infty\geq C_*$.
Define the following half-relaxed limit as in \cite[Sect. 6]{B2013}:
\begin{equation*}%\label{definitionU}
    U_i(x,t):=\liminf_{k\to+\infty \atop (x',t')\to(x,t)}W_i(x',t';\omega_k, \rho_k) \quad\text{and}\quad U_*(x,t):=\min_{1\leq i\leq n}U_i(x,t),
\end{equation*}
which is well-defined due to \eqref{liu-24}. Clearly,  $U_*$ is lower semi-continuous.
% passing a subsequence if necessary, we may assume $W_{\omega_k, \rho_k}\to W$ as $k\to+\infty$ for some $W\in C( \overline{\Omega}\times\mathbb{R})$ satisfying $\min_{\overline\Omega\times \mathbb{R}}W=0$.

 Set $\overline{d}=\max_{1\leq i\leq n}d_i>0$. We shall prove that  $U_*$ is a lower semi-continuous viscosity super-solution of the Hamilton-Jacobi problem
\begin{equation}\label{liu-21}
\begin{cases}
\begin{array}{ll}
\smallskip
\partial_{t}U_*+\overline{d} |\nabla U_*|^2+\int_0^1\mu({\bf A}(x,t)){\rm d}t = -\lambda_\infty    &\text{in }\,\,\Omega\times \mathbb{R},\\
\nabla U_*\cdot\nu =0 &\text{on }\,\partial\Omega\times \mathbb{R},
%U(x,t)= U(x,t+1) &\text{in }\,\,\Omega\times \mathbb{R},
\end{array}
\end{cases}
\end{equation}
where the boundary condition is understood  in the viscosity sense \cite{L1985}. Then we can apply the same arguments as in the proof of \cite[Lemma 3.3]{LL2022}  to derive  that
$$\lambda_\infty\geq -\max_{x\in \overline{\Omega}}\int_0^1\mu({\bf A}(x,t)){\rm d}t= C_*.$$
% \begin{equation}\label{liu-20240521-2}
%    \lambda_\infty\geq -\max_{x\in \overline{\Omega}}\int_0^1\mu({\bf A}(x,t)){\rm d}t.
% \end{equation}
%We shall show that $W$ is a viscosity super-solution of \eqref{liu-21}, and then the viscosity sub-solution can be verified by a similar argument.

To this end,  we fix any smooth test function $\phi$ and assume  that $U_*-\phi$ has a strict minimum at a point $(x_0, t_0)\in  \overline{\Omega}\times \mathbb{R}$. By the definition of viscosity super-solution \cite{I2011,L1985}, we must verify that
at $(x_0, t_0)$, it holds
\begin{equation}\label{liu-20}
\left\{
\begin{array}{ll}
\medskip
\partial_{t}\phi+\overline{d} |\nabla \phi|^2+\int_0^1\mu({\bf A}(x_0,t)){\rm d}t\geq -\lambda_\infty  &\text{if }\,\,x_0 \in \Omega,\\
%\partial_{t}\phi+|\nabla \phi|^2-\hat{c}(x_0)\geq -\mu _*
\max\left\{\nabla \phi\cdot \nu ,\,\, \partial_{t}\phi+\overline{d} |\nabla \phi|^2-\int_0^1\mu({\bf A}(x_0,t)){\rm d}t+ \lambda_\infty\right\}\geq 0
 &\text{if }\,\,x_0 \in \partial \Omega. % \text{ and }{\color{red} \nabla \phi(x_0, t_0)\cdot \nu (x_0)\leq 0}.
\end{array}
\right.
\end{equation}

Indeed, let ${\bf v}=(v_1,\cdots,v_n)>0$ denote the principal eigenvector of ${\bf A}(x,t)$ corresponding to the maximal eigenvalue $\mu ({\bf A}(x,t))$, that is
\begin{equation}\label{liu-20240520-1}
    \sum_{j=1}^n a_{ij}(x, t)  v_j=\mu ({\bf A}(x,t)) v_i \quad \text{ for all } (x,t)\in \Omega\times\mathbb{R},\,\, i=1,\cdots,n.
\end{equation}
%${\bf A} {\bf v}=\mu({\bf A}(x,t)) {\bf v}$ for all $(x,t)\in \Omega\times\mathbb{R}$.
For any $1\leq i \leq n$ and $k\geq 1$, we define
\begin{equation}\label{liu-20231230-4}
\begin{split}
    N_{i}(x,t;\omega_k,\rho_k):=&\sqrt{\rho_k}\log v_i\left(x,\frac{\omega_k t}{\sqrt{\rho_k}}\right)\\
    &-\frac{\sqrt{\rho_k}}{\omega_k}\left[\int_0^{\frac{\omega_k t}{\sqrt{\rho_k}}}\mu ({\bf A}(x,s)){\rm d}s-\frac{\omega_k t}{\sqrt{\rho_k}}\int_0^1\mu({\bf A}(x,t)){\rm d}t\right].
    \end{split}
\end{equation}
Due to $\rho_k\to 0$ and $\omega_k /\sqrt{\rho_k}\to +\infty$ as $k\to+\infty$, it follows %by \eqref{liu-20231230-4}
that
 \begin{equation}\label{liu-27}
 \max_{1\leq i\leq n}\max_{ \overline{\Omega}\times\mathbb{R}} (|N_{i}|+|\nabla N_{i}| +|\Delta N_{i}|) \to 0 \quad\text{as }\,\, k\to +\infty.
 \end{equation}
 Together with \eqref{liu-20240520-1} and \eqref{liu-20231230-4}, we can verify that
 \begin{equation}\label{liu-20231230-3}
    \sum_{j=1}^n a_{ij}\left(x,\frac{\omega_k t}{\sqrt{\rho_k}}\right)  e^{\frac{N_{j}-N_{i}}{\sqrt{\rho_k}}}=\mu \left({\bf A}\left(x,\omega_k t/{\sqrt{\rho_k}}\right)\right) , \quad i=1,\cdots,n.
 \end{equation}

First, we assume $x_0\in\Omega$. %Let $x=x_0$ in \eqref{liu-20231230-2} and denote by $(v_1,\cdots,v_n)$  the positive eigenvector of \eqref{liu-20231230-2} corresponding to $\mu (x_0)$.
By \eqref{liu-27}, the definition of $U_*$ implies % we have
$$ U_*(x,t)=\liminf_{k\to+\infty \atop (x',t')\to(x,t)}\min_{1\leq i\leq n }\left(W_i(x',t';\omega_k, \rho_k)- N_{i}(x',t';\omega_k, \rho_k)\right).$$
  Since $(x_0, t_0)$  is a strict local  minimum point of $U_*-\phi$, there exist some index $\ell\in\{1,\cdots, n\}$ and a sequence $(x_k, t_k)\in\Omega\times\mathbb{R}$ such that  $(x_k, t_k)\to(x_0,t_0)$ as $k\to+\infty$, and
  \begin{equation}\label{liu-20231230-2-1}
  \begin{split}
      W_{\ell}(x_k,t_k)- N_{\ell}(x_k,t_k)
      =\min_{1\leq j\leq n }\left(W_j(x_k,t_k)- N_{j} (x_k,t_k)\right)\to U_*(x_0,t_0),
      \end{split}
  \end{equation}
  and $W_{\ell}-(N_{\ell}+\phi)$ attains a local minimum at $(x_k, t_k)$, so that $\Delta (W_{\ell}-(N_{\ell}+\phi)\geq 0$ at $(x_k, t_k)$. Hence, we evaluate \eqref{liu-25} at $(x_k,t_k)$ and obtain
\begin{equation*}
\begin{split}
  &\partial_t N_{\ell}(x_k,  t_k)+\partial_t \phi +d_{\ell}|\nabla N_{\ell}+\nabla \phi|^2+\sum_{j=1}^n a_{\ell j}\left(x_k,\frac{\omega_k t_k}{\sqrt{\rho_k}}\right)e^{\frac{W_\ell-W_j}{\sqrt{\rho_k}}}\\
  \geq& -\lambda(\omega_k,\rho_k)+\sqrt{\rho_k}(\Delta N_{\ell}+ \Delta\phi).
  \end{split}
\end{equation*}
Hence, by \eqref{liu-27} and \eqref{liu-20231230-2-1}, we derive that at $(x_k,t_k)$,
%the definition of $h$ in \eqref{define_h}
\begin{equation*}
  \partial_t N_{\ell}(x_k,  t_k)+\partial_t \phi +d_{\ell}|\nabla \phi|^2+\sum_{j=1}^n a_{\ell j}\left(x_k,\frac{\omega_k t_k}{\sqrt{\rho_k}}\right)e^{\frac{N_{j}-N_{\ell}}{\sqrt{\rho_k}}}\\
  \geq -\lambda(\omega_k,\rho_k)+o(1),
\end{equation*}
which together with \eqref{liu-20231230-3} yields
\begin{equation}\label{liu-001}
  \partial_t N_{\ell}+\partial_t \phi +d_{\ell}|\nabla \phi|^2
  \geq -\lambda(\omega_k,\rho_k)-\mu \left({\bf A}\left(x,\omega_k t/{\sqrt{\rho_k}}\right)\right)+o(1) \quad \text{at }(x_k,  t_k).
\end{equation}
Observe from \eqref{liu-20231230-4} that
\begin{equation}\label{liu-20240303-1}
    \partial_t N_{\ell}=\omega_k \partial_t(\log v_{\ell})-\mu \left({\bf A}\left(x,\omega_k t_k/{\sqrt{\rho_k}}\right)\right)+\int_0^1\mu({\bf A}(x,t)){\rm d}t,\quad\forall (x,t)\in\Omega\times\mathbb{R}.
\end{equation}
Due to $\omega_k \to 0$ as $k\to+\infty$, we deduce from \eqref{liu-001} that
\begin{equation}\label{inequality}
    \partial_t \phi +\overline{d} |\nabla \phi|^2+\int_0^1\mu({\bf A}(x_k,t)){\rm d}t\geq -\lambda(\omega_k,\rho_k)+o(1)\quad \text{at }\, (x_k, t_k),
\end{equation}
where letting  $k\to+\infty$ yields the first equation in  \eqref{liu-20}.

Next we consider the case $x_0\in\partial\Omega$ and prove the second equation in \eqref{liu-20}. Assume $\nabla \phi(x_0, t_0)\cdot \nu (x_0)<0$, since otherwise \eqref{liu-20} would hold automatically. % naturally.
% We choose $\psi\in C^2( \overline{\Omega})$ satisfying
% $$
% {\color{red} \psi=0 , \,\,\nabla\psi\cdot \nu =1\,\,\text{ on }\, \partial \Omega\quad \text{and}\quad  \psi>0} \,\,\text{ in }\,  \Omega.$$
% {\color{blue}Thanks for pointing this gap out. I will correct as soon as possible.}
%Then for any $\epsilon>0$, $W-\phi+\epsilon \psi$ still has a local strict minimum point at $(x_0,t_0)$. Hence,
As  discussed above, there exists $(x_k, t_k)\in \overline{\Omega}\times\mathbb{R}$ such that \eqref{liu-20231230-2-1} holds for some $1\leq \ell\leq n$, $W_{\ell}-(N_{\ell}+\phi)$ attains its minimum at $(x_k, t_k)$, and $(x_k, t_k)\to(x_0,t_0)$ as $k\to+\infty$. We claim  that $x_k\in\Omega$ for large $k$. Indeed,  if $x_k\in\partial\Omega$, then the boundary condition of $W_\ell$ in \eqref{liu-25} implies that % at $(x_k, t_k)$,
\begin{equation}\label{liu-26}
  0=\nabla W_{\ell}\cdot \nu \leq  \nabla(N_{\ell}+\phi)\cdot \nu  =\nabla N_{\ell}\cdot \nu +\nabla\phi\cdot \nu\quad \text{at }\, (x_k, t_k).
\end{equation}
In view of $\nabla \phi(x_0, t_0)\cdot \nu (x_0)<0$, using \eqref{liu-27} and letting $k\to+\infty$ in \eqref{liu-26} give a contradiction, which implies  $x_k\in\Omega$ for large $k$. Hence, \eqref{inequality} remains true   at $(x_k, t_k)$ for large $k$.
% By \eqref{define_h} again, %the definition of $h$,
%  we then deduce \eqref{inequality} still holds at $(x_n, t_n)$.
Letting  $k\to+\infty$ in \eqref{inequality}, %and then letting $\epsilon\to 0$,
we can conclude  \eqref{liu-20}  holds.

Therefore, $U_*$ is a viscosity super-solution of \eqref{liu-21} and thus $\lambda_\infty\geq C_*$.  Step 1  is complete.
%{\color{red}  So far the proof is solid.}

\smallskip
{\it Step 2}. We show  $\lambda_\infty\leq C_*$. Similar to Step 1, by \eqref{liu-24} we define
\begin{equation*}%\label{definitionU}
    \overline{U}_i(x,t):=\limsup_{k\to+\infty \atop (x',t')\to(x,t)}W_i(x',t';\omega_k, \rho_k) \quad\text{and}\quad \overline{U}_*(x,t):=\max_{1\leq i\leq n}U_i(x,t),
\end{equation*}
and thus $\overline{U}_*$ is upper semi-continuous. We shall show that  $\overline{U}_*$ is a viscosity sub-solution of %the problem
\begin{equation}\label{liu-20240521-3}
\begin{cases}
\begin{array}{ll}
\smallskip
\partial_{t}U+\underline{d} |\nabla U|^2+ \int_0^1\mu({\bf A}(x,t)){\rm d}t= -\lambda_\infty    &\text{in }\,\,\Omega\times \mathbb{R},\\
\nabla U_*\cdot\nu =0 &\text{on }\,\partial\Omega\times \mathbb{R},
%U(x,t)= U(x,t+1) &\text{in }\,\,\Omega\times \mathbb{R},
\end{array}
\end{cases}
\end{equation}
where  $\underline{d}=\min_{1\leq i\leq n}d_i>0$. That is, given any $(x_0, t_0)\in  \overline{\Omega}\times \mathbb{R}$ and any smooth test function $\phi$,  assume  that $\overline{U}_*-\phi$ has a strict maximum at  $(x_0, t_0)$, then we must prove that
at $(x_0, t_0)$,
\begin{equation}\label{liu-20240521-4}
\left\{
\begin{array}{ll}
\medskip
\partial_{t}\phi+\underline{d} |\nabla \phi|^2+ \int_0^1\mu({\bf A}(x_0,t)){\rm d}t\leq -\lambda_\infty  &\text{if }\,\,x_0 \in \Omega,\\
%\partial_{t}\phi+|\nabla \phi|^2-\hat{c}(x_0)\geq -\mu _*
\max\left\{\nabla \phi\cdot \nu ,\,\, \partial_{t}\phi+\underline{d} |\nabla \phi|^2-\int_0^1\mu({\bf A}(x_0,t)){\rm d}t+ \lambda_\infty\right\}\leq 0
 &\text{if }\,\,x_0 \in \partial \Omega. % \text{ and }{\color{red} \nabla \phi(x_0, t_0)\cdot \nu (x_0)\leq 0}.
\end{array}
\right.
\end{equation}

We first assume $x_0\in\Omega$. Let  $N_{i}$ be defined by \eqref{liu-20231230-4}. Due to \eqref{liu-27}, we observe from
 the definition of $\overline{U}_*$ that
$$ \overline{U}_*(x,t)=\limsup_{k\to+\infty \atop (x',t')\to(x,t)}\max_{1\leq i\leq n }\left(W_i(x',t';\omega_k, \rho_k)- N_{i}(x',t'; \omega_k, \rho_k)\right).$$
%Since $(x_0, t_0)$  is a strict local  minimum point of $U_*-\phi$,
Hence, by \cite[Lemma 6.1]{B2013} there exist $\ell\in\{1,\cdots, n\}$ and sequence $(x_k, t_k)\in\Omega\times\mathbb{R}$ such that  $(x_k, t_k)\to(x_0,t_0)$ as $k\to+\infty$, $W_{\ell}-(N_{\ell}+\phi)$ attains  local maximum at $(x_k, t_k)$, and
  \begin{equation}\label{liu-20231230-2-2}
  \begin{split}
      W_{\ell}(x_k,t_k)- N_{\ell}(x_k,t_k)
      =\max_{1\leq j\leq n }\left(W_j(x_k,t_k)- N_{j} (x_k,t_k)\right)\to \overline{U}_*(x_0,t_0).
      \end{split}
  \end{equation}
  This implies that $\Delta W_{\ell}(x_k,t_k)\leq \Delta (N_{\ell}+\phi)(x_k,t_k)$, so that  by \eqref{liu-25} we have %at $(x_k,t_k)$,
  \begin{equation*}
\begin{split}
  &\partial_t N_{\ell}(x_k,  t_k)+\partial_t \phi +d_{\ell}|\nabla N_{\ell}+\nabla \phi|^2+\sum_{j=1}^n a_{\ell j}\left(x_k,\frac{\omega_k t_k}{\sqrt{\rho_k}}\right) e^{\frac{W_\ell-W_j}{\sqrt{\rho_k}}}\\
  \leq& -\lambda(\omega_k,\rho_k)+\sqrt{\rho_k}(\Delta N_{\ell}+ \Delta\phi) \qquad \text{ at }(x_k,t_k).
  \end{split}
\end{equation*}
Similar to \eqref{liu-001},
we can deduce from \eqref{liu-20231230-3} and \eqref{liu-20231230-2-2} that
%the definition of $h$ in \eqref{define_h}
\begin{equation*}
  \partial_t N_{\ell}(x_k,  t_k)+\partial_t \phi +d_{\ell}|\nabla \phi|^2\\
  \leq -\lambda(\omega_k,\rho_k)-\mu \left({\bf A}\left(x_k,\omega_k t_k/{\sqrt{\rho_k}}\right)\right)+o(1).
\end{equation*}
Hence, by \eqref{liu-20240303-1} and the fact that $\omega_k \to 0$ as $k\to+\infty$,  it follows  that
\begin{equation*}%\label{inequality}
    \partial_t \phi +\underline{d} |\nabla \phi|^2+\int_0^1\mu({\bf A}(x_k,t)){\rm d}t\leq -\lambda(\omega_k,\rho_k)+o(1)\quad \text{at }\, (x_k, t_k).
\end{equation*}
Letting  $k\to+\infty$ in the above inequality yields the first equation in  \eqref{liu-20240521-4}. Then we can use same arguments as in Step 1 to conclude the  second equation in \eqref{liu-20240521-4} holds for $x_0\in \partial\Omega$.

Therefore, $\overline{U}_*$ is a viscosity lower-solution of \eqref{liu-20240521-3}, so that by the arguments as in the proof of \cite[Lemma 3.3]{LL2022}  we have
%\begin{equation*}%\label{liu-20240521-2}
   $\lambda_\infty\leq C_* %-\max_{x\in \overline{\Omega}}\int_0^1\mu({\bf A}(x,t)){\rm d}t
   $.
%\end{equation*}
This and Step 1 %\eqref{liu-20240521-2}
together prove $\lambda_\infty=C_*$. %\eqref{liu-20240521-1}.
The proof is now complete.
\end{proof}

Next, we consider the asymptotic behaviors for principal eigenvalue   in the limit of $(\omega,\rho)\to(0,0)$
 and $\omega\approx \sqrt{\rho}$. % for some $\vartheta>0$.
 To this end,
we first prepare the following comparison result for the critical value of Hamilton-Jacobi equation \eqref{Liu01-20210716}.
\begin{lemma}\label{liu-20240122}
 For any $({\bf p},x,t)\in\mathbb{R}^n\times \Omega \times \mathbb{R}$, let Hamiltonian ${\rm H} ({\bf p},x,t)$ be defined in Theorem {\rm\ref{TH-liu-20240106}}. Given any $\vartheta>0$, let $c_\vartheta\in \mathbb{R}$ be any value such that the following problem admits a lower semi-continuous %function $U_*$ being a
 viscosity super-solution (resp. sub-solution):
\begin{equation}\label{liu-20240324-5}
\begin{cases}
\begin{array}{ll}
\smallskip
\vartheta\partial_{t}U+{\rm H}(\nabla U, x,t) = -c_\vartheta    &\text{in }\,\,\Omega\times \mathbb{R},\\
\nabla U\cdot\nu =0 &\text{on }\,\partial\Omega\times \mathbb{R},\\
U(x,t)= U(x,t+1) &\text{in }\,\,\Omega\times \mathbb{R}.
\end{array}
\end{cases}
\end{equation}
%admits  a lower semi-continuous %function $U_*$ being a
%viscosity super-solution.
Then $C(\vartheta)\leq c_\vartheta$ (resp. $C(\vartheta)\geq c_\vartheta$), where $C(\vartheta)$ is  the critical value of problem \eqref{Liu01-20210716} as defined in Theorem {\rm\ref{TH-liu-20240106}}.
\end{lemma}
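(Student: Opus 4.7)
The plan is to argue by contradiction, exploiting the fact that the frequency operator $\vartheta\partial_t$ is linear in $U$, so adding an affine drift of the form $\beta t$ to a sub/supersolution shifts the right-hand side by the constant $\vartheta\beta$ without altering the Hamiltonian. I first treat the supersolution case (the subsolution case being symmetric). Suppose $V$ is a lower semi-continuous viscosity supersolution of \eqref{liu-20240324-5}, and suppose toward a contradiction that $C(\vartheta)>c_\vartheta$. Define
\[
\psi(x,t):=V(x,t)+\frac{c_\vartheta-C(\vartheta)}{\vartheta}\,t.
\]
A direct check from the viscosity definition shows that $\psi$ is a lower semi-continuous supersolution of $\vartheta\partial_t U+{\rm H}(\nabla U,x,t)=-C(\vartheta)$ in $\Omega\times\mathbb{R}$ with the Neumann boundary condition. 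The function $\psi$ is no longer $1$-periodic in $t$; rather, $\psi(x,t+1)=\psi(x,t)+(c_\vartheta-C(\vartheta))/\vartheta$.

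Next I invoke Theorem \ref{TH-liu-20240106} to obtain a Lipschitz viscosity solution $U_\vartheta$ of \eqref{Liu01-20210716}, and apply the standard parabolic comparison principle for viscosity sub/super\-solutions of the convex, coercive Hamilton--Jacobi equation on the strip $\overline{\Omega}\times[0,T]$ with Neumann boundary condition, viewing $U_\vartheta$ as a subsolution and $\psi$ as a supersolution. This yields
\[
\sup_{\overline{\Omega}\times[0,T]}(U_\vartheta-\psi)\le \sup_{\overline{\Omega}}\bigl(U_\vartheta(\cdot,0)-\psi(\cdot,0)\bigr),
\]
and the right-hand side is finite, because $U_\vartheta$ is continuous and $\psi(\cdot,0)=V(\cdot,0)$ is lower semi-continuous on the compact set $\overline{\Omega}$, hence bounded below. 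On the other hand,
\[
U_\vartheta(x,t)-\psi(x,t)=\bigl[U_\vartheta(x,t)-V(x,t)\bigr]-\frac{c_\vartheta-C(\vartheta)}{\vartheta}\,t,
\]
where the bracketed term is $1$-periodic in $t$, hence bounded, while the linear term tends to $+\infty$ as $t\to+\infty$ since $C(\vartheta)>c_\vartheta$. Letting $T\to+\infty$ contradicts the previous uniform bound, so $C(\vartheta)\le c_\vartheta$. The subsolution case is handled symmetrically: if $V$ is an upper semi-continuous viscosity subsolution at level $c_\vartheta$ and $c_\vartheta<C(\vartheta)$, the same shift $\psi=V+\tfrac{c_\vartheta-C(\vartheta)}{\vartheta}t$ becomes a subsolution at level $C(\vartheta)$ with $\psi(x,t)\to -\infty$ as $t\to+\infty$, and comparing $\psi$ against $U_\vartheta$ (now the supersolution) produces the same type of contradiction.

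The main obstacle is justifying the comparison principle with the oblique (Neumann) boundary condition for a Hamiltonian that is merely Lipschitz in $(x,t)$ but convex and coercive in $p$, applied to a possibly only lower semi-continuous supersolution. Standard results (e.g.\ Ishii, Barles, Lions--Papanicolaou--Varadhan) cover this setting, but a safer implementation is to first regularize $V$ via sup-convolution in $(x,t)$ to obtain a continuous supersolution on a slightly smaller cylinder, apply comparison there, and then pass to the limit; the lower semi-continuity of $V$ is exactly what guarantees that $V$ is bounded below on the compact set $\overline{\Omega}\times[0,1]$, which is all that is used in the finiteness of the initial-data bound above.
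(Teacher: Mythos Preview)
Your proposal is correct and follows essentially the same route as the paper: argue by contradiction, add a linear-in-$t$ drift to shift the level of one of the two functions so that they solve the same equation, then invoke the parabolic comparison principle with Neumann boundary condition (the paper cites \cite[Theorem 3.4]{I2011}) and exploit time-periodicity to contradict the unbounded linear growth. The only cosmetic difference is that the paper adds the drift $\delta t$ to the Lipschitz solution $U_\vartheta$ rather than to the supersolution $V$, and first normalizes by an additive constant to arrange the initial ordering; your version bounding $\sup(U_\vartheta-\psi)$ by its initial value is equivalent.
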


\begin{proof}
We assume $c_\vartheta$ be the value such that problem \eqref{liu-20240122} admits a viscosity super-solution and prove $ C(\vartheta)\leq c_\vartheta$, then the converse can  be shown by the same arguments.  Suppose on the contrary that $ C(\vartheta)>c_\vartheta$.
Let $\overline{U}_\vartheta$ be any viscosity super-solution of \eqref{liu-20240324-5}. Denote
$U_\vartheta$ by any Lipschitz viscosity solution of \eqref{Liu01-20210716} associated to $C(\vartheta)$.
 Adding  a constant to $\overline{U}_\vartheta$ if necessary, we may suppose that $U_\vartheta(x,0)<\overline{U}_\vartheta(x,0)$ for all $x\in\overline{\Omega}$. Due to $c_\vartheta> C(\vartheta)$, we can choose $\delta>0$ small such that $\vartheta \delta-C(\vartheta)\leq -c_\vartheta$, so that $\underline{U}_{\delta}:=U_\vartheta+\delta t$ satisfies
\begin{equation*}%\label{liu-25-2}
\begin{cases}
\begin{array}{ll}
\smallskip
\vartheta\partial_{t} \underline{U}_{\delta} +{\rm H}(\nabla \underline{U}_{\delta},x,t)\leq \vartheta\partial_{t} \overline{U}_\vartheta+{\rm H}(\nabla \overline{U}_\vartheta,x,t), &x\in \Omega, \, t>0,\\
\nabla U_\delta\cdot\nu =\nabla \overline{U}_\vartheta\cdot\nu =0, &x\in \partial\Omega, \, t>0
\end{array}
\end{cases}
\end{equation*}
in the viscosity sense.
Since $\overline{U}_\vartheta(x,0)>\underline{U}_\delta(x,0)$ for all $x\in\Omega$,
By comparison (see e.g. \cite[Theorem 3.4]{I2011}),
it holds that $\overline{U}_\vartheta \geq \underline{U}_\delta $ for all $x\in\overline{\Omega}$ and $t>0$. Hence, for any  integer $n$,
$$\overline{U}_\vartheta(x,0)=\overline{U}_\vartheta(x, n)\geq \underline{U}_\delta(x,n)=U_\vartheta(x,0)+\delta n,$$
for which letting $n\to+\infty$ yields $\overline{U}_\vartheta(x,0)=+\infty$. This  is a contradiction, and thus   $ C(\vartheta)\leq c_*$, which completes the proof.
\end{proof}

\begin{proposition}\label{prop-20240521}
   Let
$\lambda(\omega, \rho)$ denote the principal eigenvalue of \eqref{Liu1}.
Then for any $\vartheta>0$, %there holds
$$\lim_{(\omega,\rho)\to (0,0) \atop \frac{\omega}{\sqrt{\rho}}\to \vartheta}\lambda(\omega, \rho)=C(\vartheta),$$
whereas $C(\vartheta)$ is the unique value such that %Hamilton-Jacobi equation
\eqref{Liu01-20210716} admits a Lipschitz viscosity solution. %$C(\vartheta)$ is the unique value for which  Hamilton-Jacobi equation \eqref{Liu01-20210716} admits a Lipschitz viscosity solution
\end{proposition}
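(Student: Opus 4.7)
The plan is to adapt the vanishing viscosity (WKB-type) argument from the proof of Proposition \ref{liuprop-2} to the new scaling regime $\omega/\sqrt{\rho}\to \vartheta \in (0,\infty)$. Since the frequency-to-diffusion ratio stays bounded, no time rescaling is needed: I set $W_i(x,t;\omega,\rho):=-\sqrt{\rho}\log\varphi_i(x,t)$, where ${\bm\varphi}=(\varphi_1,\dots,\varphi_n)>0$ is the principal eigenfunction of \eqref{Liu1}, normalized by $\max_i\max_{\overline{\Omega}\times\mathbb{R}}\varphi_i=1$. A direct computation gives the time-periodic system
\begin{equation*}
\frac{\omega}{\sqrt{\rho}}\partial_t W_i-\sqrt{\rho}\,d_i\Delta W_i+d_i|\nabla W_i|^2+\sum_{j=1}^n a_{ij}(x,t)\,e^{(W_i-W_j)/\sqrt{\rho}}=-\lambda(\omega,\rho),
\end{equation*}
with Neumann boundary condition and $\min_i\min_{\overline{\Omega}\times\mathbb{R}}W_i=0$. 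Standard barrier arguments in the spirit of \cite[Lemma 2.1]{ES1989} (analogous to \eqref{liu-24}) yield a uniform $L^\infty$-bound on $W_i$, valid for $(\omega,\rho)$ in any neighborhood of $(0,0)$ with $\omega/\sqrt{\rho}$ uniformly comparable to $\vartheta$.

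Given any sequence $(\omega_k,\rho_k)\to(0,0)$ with $\omega_k/\sqrt{\rho_k}\to\vartheta$ and $\lambda(\omega_k,\rho_k)\to\lambda_\infty$ (along a subsequence), I introduce the half-relaxed limits
\begin{equation*}
U_*(x,t):=\min_{1\leq i\leq n}\liminf_{k\to+\infty,\,(x',t')\to(x,t)}W_i(x',t';\omega_k,\rho_k),\qquad U^*(x,t):=\max_{1\leq i\leq n}\limsup_{k\to+\infty,\,(x',t')\to(x,t)}W_i(x',t';\omega_k,\rho_k),
\end{equation*}
which inherit unit-period time-periodicity from ${\bm\varphi}$. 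The goal is to show that $U_*$ (resp.\ $U^*$) is a viscosity super-solution (resp.\ sub-solution) of
\begin{equation*}
\vartheta\,\partial_t U+{\rm H}(\nabla U,x,t)=-\lambda_\infty\quad\mathrm{in}~\Omega\times\mathbb{R},\qquad\nabla U\cdot\nu=0\quad\mathrm{on}~\partial\Omega\times\mathbb{R},\qquad U(x,t)=U(x,t+1),
\end{equation*}
with ${\rm H}$ as in Theorem \ref{TH-liu-20240106}. Once this is done, Lemma \ref{liu-20240122} applied to both inequalities yields $C(\vartheta)\leq\lambda_\infty\leq C(\vartheta)$, so that $\lambda_\infty=C(\vartheta)$, and since the limiting sequence is arbitrary, the conclusion follows.

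The heart of the argument is the corrector step in the matrix-valued setting, generalizing the construction of $N_i$ in \eqref{liu-20231230-4}. Fix a smooth test function $\phi$ and a point $(x_0,t_0)$ at which $U_*-\phi$ attains a strict local minimum, and set $p:=\nabla\phi(x_0,t_0)$. Let ${\bf v}(x,t)=(v_1,\dots,v_n)>0$ denote the smooth principal eigenvector of the essentially positive symmetric matrix ${\rm diag}(d_i|p|^2)+{\bf A}(x,t)$ associated with its maximal eigenvalue ${\rm H}(p,x,t)$, and define $N_i(x,t;\rho_k):=-\sqrt{\rho_k}\log v_i(x,t)$; smoothness of ${\bf v}$ gives $|N_i|+|\nabla N_i|+|\partial_t N_i|+|\Delta N_i|\to 0$ uniformly, and the eigenvector identity reads
\begin{equation*}
\sum_{j=1}^n a_{ij}(x,t)\,e^{(N_i-N_j)/\sqrt{\rho_k}}={\rm H}(p,x,t)-d_i|p|^2.
\end{equation*}
By \cite[Lemma 6.1]{B2013} one extracts an index $\ell$ and points $(x_k,t_k)\to(x_0,t_0)$ at which $W_\ell-N_\ell-\phi$ attains a local minimum; comparing $W_\ell-N_\ell$ against $W_j-N_j$ at such points gives $e^{(W_\ell-W_j)/\sqrt{\rho_k}}\leq e^{(N_\ell-N_j)/\sqrt{\rho_k}}$ for $j\neq\ell$, and using $a_{\ell j}\geq 0$ for the off-diagonal entries bounds the exponential sum from above by ${\rm H}(p,x_k,t_k)-d_\ell|p|^2$. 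Plugging into the PDE for $W_\ell$ at $(x_k,t_k)$ (where $\Delta W_\ell\geq\Delta(N_\ell+\phi)$ and the first-order derivatives coincide) and letting $k\to\infty$ produces $\vartheta\,\partial_t\phi+{\rm H}(p,x_0,t_0)\geq-\lambda_\infty$; boundary points $x_0\in\partial\Omega$ are handled exactly as in Proposition \ref{liuprop-2} by ruling out $x_k\in\partial\Omega$ when $\nabla\phi\cdot\nu<0$. The sub-solution property of $U^*$ follows symmetrically by replacing the minimum with a maximum and reversing the resulting inequality.

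The main obstacle I anticipate is the systems-level corrector step: one must couple the index-selection $\ell$ correctly with the Perron--Frobenius eigenvector of the frozen-gradient matrix ${\rm diag}(d_i|p|^2)+{\bf A}(x,t)$, so that the exponential coupling term in the limit produces precisely the matrix Hamiltonian ${\rm H}$ rather than an upper/lower bound with a gap (as happened in Proposition \ref{liuprop-2}, where the time-averaging allowed a discrepancy between $\overline{d}$ and $\underline{d}$). Checking that the frozen-gradient choice $p=\nabla\phi(x_0,t_0)$ suffices---as opposed to tracking $\nabla\phi(x_k,t_k)$ or $\nabla W_\ell$---uses only continuity of ${\rm H}(\cdot,x,t)$ and the uniform vanishing of $N_i$; the remaining verifications (uniform bounds, boundary case, time-periodicity, application of Lemma \ref{liu-20240122}) are structural and proceed as in Proposition \ref{liuprop-2}.
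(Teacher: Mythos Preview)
Your proposal is correct and follows essentially the same route as the paper's own proof: the transformation $W_i=-\sqrt{\rho}\log\varphi_i$, the uniform $L^\infty$ bound, the half-relaxed limits $U_*$ and $U^*$, the Perron--Frobenius corrector tied to the frozen gradient $p=\nabla\phi(x_0,t_0)$, the index-selection giving the one-sided inequality on the exponential coupling, and the conclusion via Lemma~\ref{liu-20240122} all match. The one cosmetic difference is that you let the eigenvector ${\bf v}(x,t)$ vary in $(x,t)$ while the paper freezes it entirely at $(x_0,t_0)$, so that the corrector $\sqrt{\rho_k}\log v_i$ is a constant; the paper's choice is slightly cleaner because under the standing hypothesis $a_{ij}\in C(\overline\Omega\times\mathbb{R})$ the map $(x,t)\mapsto{\bf v}(x,t)$ need not be $C^2$, so your claim $|\Delta N_i|\to 0$ is not automatic, whereas with constant $v_i$ the issue disappears.
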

\begin{proof}%[Proof of Theorem {\rm \ref{TH-liu-20240106}}]
Denote by ${\bm \varphi}=(\varphi_1,\cdots,\varphi_n)>0$   the principal eigenvector of \eqref{Liu1} associated to $\lambda(\omega,\rho)$, which is normalized by $\max_{1\leq i\leq n}\max_{\overline\Omega\times\mathbb{R}}\varphi_i=1$. Set  $W_i(x,t;\omega,\rho):=-\sqrt{\rho} \log \varphi_i(x, t)$ for all $(x,t)\in\Omega\times \mathbb{R}$, $i=1,\cdots,n$. Then $$\min_{1\leq i\leq n}\min_{\overline\Omega\times\mathbb{R}} W_i(\cdot,\cdot;\omega,\rho)=0, \quad\forall \omega,\rho>0.$$  Direct calculations from \eqref{Liu1} yield that for any $1\leq i\leq n$,
  \begin{equation}\label{liu-20240110}
\left\{
\begin{array}{ll}
\frac{\omega}{\sqrt{\rho}} \partial_{t}W_i-\sqrt{\rho}d_i\Delta W_i+d_i |\nabla W_i|^2+\displaystyle\sum_{j=1}^n a_{ij}e^{\frac{W_i-W_j}{\sqrt{\rho}}}=-\lambda(\omega,\rho)  &\text{in }\,\,\Omega\times \mathbb{R},\\
%\smallskip
\nabla W_i\cdot\nu =0 &\text{on }\,\partial\Omega\times \mathbb{R}.
%W_d(x,t)= W_d(x,t+1) &\text{in }\,\,\Omega\times \mathbb{R}.
\end{array}
\right.
\end{equation}
Note that $\lambda(\omega,\rho)$ is uniformly bounded for all $\omega,\rho>0$.  %Fix any $\underline{\vartheta}>0$.
By the same argument in \cite[Lemma 2.1]{ES1989},  %we may derive that
there exists some positive constant $C$ independent of $\omega$ and $\rho$ such that %for all $\omega,\rho>0$,
\begin{equation}\label{liu-24-1}
      \max_{1\leq i\leq n}\max_{ \overline{\Omega}\times\mathbb{R}}|W_i(x,t;\omega,\rho)| \leq C, \quad \forall \omega,\rho>0.
\end{equation}

Given any  sequences $\{\omega_k\}_{k\geq 1}$ and $\{\rho_k\}_{k\geq 1}$ such that $\omega_k\to+\infty$ and $\omega_k/\sqrt{\rho_k}\to \vartheta$ as $k\to+\infty$, we assume  $\lambda(\omega_k,\rho_k)\to \lambda_\vartheta$ for some $\lambda_\vartheta\in\mathbb{R}$. % then \eqref{liu-28} implies $\lambda_\vartheta\leq \min_{ \overline{\Omega}}\hat{c}$.

\smallskip
{\it Step 1}. We prove $\lambda_\vartheta\geq C(\vartheta)$.
Define %the following half-relaxed limit:
\begin{equation}\label{definitionU}
   \underline{U}_i(x,t):=\liminf_{k\to+\infty \atop (x',t')\to(x,t)}W_i(x',t';\omega_k, \rho_k) \quad\text{and}\quad U_*(x,t):=\min_{1\leq i\leq n}\underline{U}_i(x,t),
\end{equation}
which are well-defined due to \eqref{liu-24-1}. Clearly,  $U_*$ is lower semi-continuous. We shall verify that $U_*$ is a viscosity super-solution of the problem
\begin{equation}\label{liu-21-1}
\begin{cases}
\begin{array}{ll}
\smallskip
\vartheta\partial_{t}U+{\rm H}(\nabla U, x,t) = -\lambda_\vartheta    &\text{in }\,\,\Omega\times \mathbb{R},\\
\nabla U\cdot\nu =0 &\text{on }\,\partial\Omega\times \mathbb{R}.
%U(x,t)= U(x,t+1) &\text{in }\,\,\Omega\times \mathbb{R},
\end{array}
\end{cases}
\end{equation}
That is,
for any smooth test function $\phi$ and assume  that $U_*-\phi$ has a strict minimum at a point $(x_0, t_0)\in \overline{\Omega}\times \mathbb{R}$, then
at $(x_0, t_0)$,
\begin{equation}\label{liu-20-1}
\begin{cases}
\begin{array}{ll}
\smallskip
\vartheta\partial_{t}\phi+{\rm H}(\nabla\phi,x_0,t_0)\geq -\lambda_\vartheta  &\text{if }\,\,x_0 \in \Omega,\\
%\partial_{t}\phi+|\nabla \phi|^2-\hat{c}(x_0)\geq -\mu _*
\max\{\nabla \phi\cdot \nu ,\,\, \partial_{t}\phi+{\rm H}(\nabla\phi,x,t)+ \lambda_\vartheta\}\geq 0
 &\text{if }\,\,x_0 \in \partial \Omega. % \text{ and }{\color{red} \nabla \phi(x_0, t_0)\cdot \nu (x_0)\leq 0}.
\end{array}
\end{cases}
\end{equation}
Then it follows from Lemma \ref{liu-20240122} that $\lambda_\vartheta\geq C(\vartheta)$.

To this end, denote by  ${\bf v}=(v_1,\cdots,v_n)$  the positive eigenvector of matrix
$${\rm diag}\{d_1,\cdots, d_n\}|\nabla\phi|^2(x_0,t_0)+{\bf A}(x_0,t_0)$$ corresponding to the eigenvalue ${\rm H}(\nabla\phi(x_0,t_0),x_0,t_0)$, then at $(x_0,t_0)$,
\begin{equation}\label{liu-20240122-1}
   d_i |\nabla\phi|^2 + \sum_{j=1}^n \frac{a_{ij} v_j}{v_i}={\rm H}(\nabla\phi,x_0,t_0) \quad \text{for all } 1\leq i\leq n.
\end{equation}
Observe from \eqref{definitionU} that
\begin{equation*}
   U_*(x,t):=\min_{1\leq i\leq n}\liminf_{k\to+\infty \atop (x',t')\to(x,t)}\left(W_i(x',t';\omega_k, \rho_k)+\sqrt{\rho_k} \log v_i\right).
\end{equation*}
Then there exists some index $1\leq \ell\leq n$ and a sequence $(x_k,t_k)$ satisfying $(x_k,t_k)\to (x_0,t_0)$ as $k\to+\infty$ such that
\begin{equation}\label{liu-20240122-2}
    W_\ell(x_k,t_k)+\sqrt{\rho_k}\log v_\ell=\min_{1\leq i\leq n}\left\{W_i(x_k,t_k)+\sqrt{\rho_k}\log v_i\right\},
\end{equation}
and
$(W_\ell+\rho_k\log v_\ell) -\phi$  attains  a local minimum at $(x_k,t_k)$.

First, we assume $x_0\in\Omega$ and prove the first inequality in \eqref{liu-20-1}. Since $x_k\in\Omega$ for large $k$, we can assume $x_k\in\Omega$ for all $k\geq 1$ without loss of generality.  By  \eqref{liu-20240110} we derive that
\begin{equation}\label{liu-20240808-1}
    \begin{split}
         -\lambda(\omega_k,\rho_k)&= \frac{\omega_k}{\sqrt{\rho_k}} \partial_{t}W_\ell-\sqrt{\rho_k}d_\ell\Delta W_\ell+d_\ell |\nabla W_\ell|^2+\displaystyle\sum_{j=1}^n a_{\ell j}e^{\frac{W_\ell-W_j}{\sqrt{\rho_k}}}\\
   &\leq \frac{\omega_k}{\sqrt{\rho_k}} \partial_{t}\phi-\sqrt{\rho_k}d_\ell\Delta \phi+d_\ell |\nabla \phi|^2+\displaystyle\sum_{j=1}^n a_{\ell j}e^{\frac{W_\ell-W_j}{\sqrt{\rho_k}}} \quad \text{at } (x_k,t_k).
    \end{split}
\end{equation}
It follows from \eqref{liu-20240122-2} that
\begin{equation*}
    W_\ell(x_k,t_k)-W_j(x_k,t_k)\leq \sqrt{\rho_k}\log v_j-\sqrt{\rho_k}\log v_\ell,\quad \forall j=1,\cdots,n,
\end{equation*}
which together with \eqref{liu-20240808-1} yields
\begin{equation}\label{liu-20240122-3}
   -\lambda(\omega_k,\rho_k)\leq \frac{\omega_k}{\sqrt{\rho_k}} \partial_{t}\phi-\sqrt{\rho_k}d_\ell\Delta \phi+d_\ell |\nabla \phi|^2+\displaystyle\sum_{j=1}^n a_{\ell j}\frac{v_j}{v_\ell} \quad \text{at } (x_k,t_k).
\end{equation}
Letting $k\to +\infty$ in \eqref{liu-20240122-3} and applying \eqref{liu-20240122-1}, we can derive the first inequality in \eqref{liu-20-1}.

Next,
we consider the case $x_0\in\partial\Omega$ and prove the second equation in \eqref{liu-20-1}. Assume $\nabla \phi(x_0, t_0)\cdot \nu (x_0)<0$, since otherwise \eqref{liu-20-1} would hold automatically. % naturally.
% We choose $\psi\in C^2( \overline{\Omega})$ satisfying
% $$
% {\color{red} \psi=0 , \,\,\nabla\psi\cdot \nu =1\,\,\text{ on }\, \partial \Omega\quad \text{and}\quad  \psi>0} \,\,\text{ in }\,  \Omega.$$
% {\color{blue}Thanks for pointing this gap out. I will correct as soon as possible.}
%Then for any $\epsilon>0$, $W-\phi+\epsilon \psi$ still has a local strict minimum point at $(x_0,t_0)$. Hence,
As  discussed above, there exists $(x_k, t_k)\in\overline{\Omega}\times\mathbb{R}$ such that $(W_\ell+\rho_k\log v_\ell)-\phi$ attains a minimum at $(x_k, t_k)$ and $(x_k, t_k)\to(x_0,t_0)$ as $k\to+\infty$. We claim  that $x_k\in\Omega$ for large $k$. Indeed,  if $x_k\in\partial\Omega$, then % the boundary condition of $W_\ell$ in \eqref{liu-25} implies that  at
%$(x_k, t_k)$,
\begin{equation}\label{liu-26-1}
  0=\nabla W_\ell(x_k, t_k)\cdot \nu (x_k)\leq  \nabla\phi(x_k, t_k)\cdot \nu (x_k).
\end{equation}
In view of $\nabla \phi(x_0, t_0)\cdot \nu (x_0)<0$, letting $k\to+\infty$ in \eqref{liu-26-1} gives a contradiction, which implies  $x_k\in\Omega$ for large $k$. Hence, \eqref{liu-20240122-3} remains true   at $(x_k, t_k)$ for large $k$. Hence, the second inequality in \eqref{liu-21-1} follows.

\smallskip
{\it Step 2}.  We prove $\lambda_\vartheta\leq C(\vartheta)$.
Define %the following half-relaxed limit:
\begin{equation*}%\label{definitionU}
    \overline{U}_i(x,t):=\limsup_{k\to+\infty \atop (x',t')\to(x,t)}W_i(x',t';\omega_k, \rho_k) \quad\text{and}\quad U^*(x,t):=\max_{1\leq i\leq n}\overline{U}_i(x,t),
\end{equation*}
which are also well-defined  by \eqref{liu-24-1}. Clearly,  $U^*$ is upper semi-continuous. We shall verify that $U^*$ is a viscosity sub-solution of problem \eqref{definitionU}.
% \begin{equation}\label{liu-21-2}
% \begin{cases}
% \begin{array}{ll}
% \smallskip
% \vartheta\partial_{t}U^*+{\rm H}(\nabla U^*, x,t) = -\lambda_\vartheta    &\text{in }\,\,\Omega\times \mathbb{R},\\
% \nabla U^*\cdot\nu =0 &\text{on }\,\partial\Omega\times \mathbb{R}.
% %U(x,t)= U(x,t+1) &\text{in }\,\,\Omega\times \mathbb{R},
% \end{array}
% \end{cases}
% \end{equation}
%By definition,
For any smooth test function $\phi$ such  that $U^*-\phi$ has a strict maximum at some point $(x_0, t_0)\in \overline{\Omega}\times \mathbb{R}$, then
at $(x_0, t_0)$,
\begin{equation}\label{liu-20-1-1}
\begin{cases}
\begin{array}{ll}
\smallskip
\vartheta\partial_{t}\phi+{\rm H}(\nabla\phi,x_0,t_0)\leq -\lambda_\vartheta  &\text{if }\,\,x_0 \in \Omega,\\
%\partial_{t}\phi+|\nabla \phi|^2-\hat{c}(x_0)\geq -\ell _*
\max\{\nabla \phi\cdot \nu ,\,\, \partial_{t}\phi+{\rm H}(\nabla\phi,x,t)+ \lambda_\vartheta\}\leq 0
 &\text{if }\,\,x_0 \in \partial \Omega. % \text{ and }{\color{red} \nabla \phi(x_0, t_0)\cdot \nu (x_0)\leq 0}.
\end{array}
\end{cases}
\end{equation}
Then the desired $\lambda_\vartheta\leq C(\vartheta)$ follows from Lemma \ref{liu-20240122}.

 Let ${\bf v}=(v_1,\cdots,v_n)>0$ be defined by \eqref{liu-20240122-1}. As in Step 1, there exists some index $1\leq \ell\leq n$ and a sequence $(x_k,t_k)$ satisfying $(x_k,t_k)\to (x_0,t_0)$ as $k\to+\infty$ such that
\begin{equation}\label{liu-20240122-4}
    W_\ell(x_k,t_k)+\sqrt{\rho_k}\log v_\ell=\max_{1\leq i\leq n}\left\{W_i(x_k,t_k)+\sqrt{\rho_k}\log v_i\right\},
\end{equation}
and
$(W_\ell+\sqrt{\rho_k}\log v_\ell) -\phi$  attains  a local maximum at $(x_k,t_k)$.

If $x_0\in\Omega$, then $x_k\in\Omega$ for large $k$.
%Hence, using the similar arguments in Step 1, at $(x_k,t_k)$ we have
Similar to \eqref{liu-20240808-1}, we have
\begin{align*}
   -\lambda(\omega_k,\rho_k)
   &\geq \frac{\omega_k}{\sqrt{\rho_k}} \partial_{t}\phi-\sqrt{\rho_k}d_\ell\Delta \phi+d_\ell |\nabla \phi|^2+\displaystyle\sum_{j=1}^n a_{\ell j}e^{\frac{W_\ell-W_j}{\sqrt{\rho_k}}}\quad \text{at } (x_k,t_k).
\end{align*}
Due to \eqref{liu-20240122-4}, it follows that
\begin{equation*}
    W_\ell(x_k,t_k)-W_j(x_k,t_k)\geq \sqrt{\rho_k}\log v_j-\sqrt{\rho_k}\log v_\ell, \quad \forall j=1,\cdots,n,
\end{equation*}
and thus
\begin{align*}
   -\lambda(\omega_k,\rho_k)
   \geq \frac{\omega_k}{\sqrt{\rho_k}} \partial_{t}\phi-\sqrt{\rho_k}d_\ell\Delta \phi+d_\ell |\nabla \phi|^2+\displaystyle\sum_{j=1}^n a_{\ell j}\frac{v_j}{v_\ell} \quad \text{at } (x_k,t_k),
\end{align*}
for which letting $k\to +\infty$ gives the first inequality in
\eqref{liu-20-1-1}.

If $x_0\in\partial\Omega$, then the second inequality in
\eqref{liu-20-1-1} can be proved by the similar arguments in Step 1.
The proof is now complete.
\end{proof}

Finally, we investigate the asymptotic behaviors of the critical value $C(\vartheta)$ for  Hamilton-Jacobi equation \eqref{Liu01-20210716}.
We first prepare the following result. % for the critical value of \eqref{Liu01-20210716}.
\begin{lemma}\label{liu-lem-20240329}
    For each $({\bf p},x,t)\in \mathbb{R}^n\times\Omega\times \mathbb{R}$, let Hamiltonian ${\rm H}({\bf p},x,t)$ be defined in the statement of Theorem {\rm\ref{TH-liu-20240106}}. For each $t\in\mathbb{R}$, consider the  Hamilton-Jacobi equation
    \begin{equation}\label{liu-20240329-1}
\begin{cases}
\begin{array}{ll}
\smallskip
{\rm H}(\nabla U, x,t) = -c(t)    &\text{in }\,\,\Omega,\\
\nabla U\cdot\nu =0 &\text{on }\,\partial\Omega.
%U(x,t)= U(x,t+1) &\text{in }\,\,\Omega\times \mathbb{R},
\end{array}
\end{cases}
\end{equation}
Then  the critical value $c(t)$ such that \eqref{liu-20240329-1} admits a Lipschitz viscosity solution satisfies
\begin{equation*}%\label{liu-20240329-2}
c(t)=-\max_{x\in\overline{\Omega}} \mu({\bf A}(x,t)) \quad \text{ for all }\,\, t\in\mathbb{R},
\end{equation*}
whereas %for each $(x,t)\in \Omega\times \mathbb{R}$,
$\mu({\bf A}(x,t))$ denotes the maximal eigenvalue of matrix ${\bf A}(x,t)$.
\end{lemma}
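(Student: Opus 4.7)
My plan is to sandwich $c(t)$ between $-\max_x\mu(\mathbf{A}(x,t))$ on both sides, by producing a viscosity sub-solution and a viscosity super-solution of \eqref{liu-20240329-1} at this common value of $c$ and then applying Lemma \ref{liu-20240122}. The key structural fact is that, since $\mathrm{diag}\{d_i|\mathbf{p}|^2\}$ is positive semidefinite and $\mathbf{A}(x,t)$ is symmetric, the monotonicity of the top eigenvalue under PSD perturbation yields
$$H(\mathbf{p},x,t)\ \ge\ H(0,x,t)\ =\ \mu(\mathbf{A}(x,t)),\qquad \forall\,\mathbf{p}\in\mathbb{R}^n,$$
with equality at $\mathbf{p}=0$. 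Moreover, a time-independent viscosity sub/super-solution of \eqref{liu-20240329-1} is automatically a viscosity sub/super-solution of \eqref{Liu01-20210716} (with $H$ time-independent and $\partial_t U\equiv 0$), so Lemma \ref{liu-20240122} applies directly.

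The lower bound is immediate: the constant $U\equiv 0$ is smooth, obeys $\nabla U\cdot\nu=0$ trivially, and $H(\nabla U,x,t)=\mu(\mathbf{A}(x,t))\le \max_y\mu(\mathbf{A}(y,t))$. Hence it is a classical sub-solution of \eqref{liu-20240329-1} at the value $c=-\max_x\mu(\mathbf{A}(x,t))$, and the sub-solution half of Lemma \ref{liu-20240122} gives $c(t)\ge -\max_x\mu(\mathbf{A}(x,t))$.

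For the upper bound I construct a lower semi-continuous viscosity super-solution through a vanishing-viscosity argument parallel to Step 1 of Proposition \ref{liuprop-2}. With $t$ frozen, let $(\lambda_\rho,\phi_\rho)$ denote the principal eigenpair of the elliptic system $-\rho\mathbf{D}\Delta\phi-\mathbf{A}(x,t)\phi=\lambda\phi$ in $\Omega$ under the Neumann boundary condition, normalized so that $\max_i\max_{\overline{\Omega}}\phi_{\rho,i}=1$. Theorem \ref{Bai-He-2020} applied to the (trivially $1$-periodic) matrix $\mathbf{A}(\cdot,t)$ yields $\lambda_\rho\to -\max_x\mu(\mathbf{A}(x,t))$ as $\rho\to 0$. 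Setting $W_{\rho,i}:=-\sqrt{\rho}\log\phi_{\rho,i}$ and computing as in the derivation of \eqref{liu-25} (with the $\partial_t$ term absent) gives
$$-\sqrt{\rho}\,d_i\Delta W_{\rho,i}+d_i|\nabla W_{\rho,i}|^2+\sum_{j=1}^n a_{ij}(x,t)\,e^{(W_{\rho,i}-W_{\rho,j})/\sqrt{\rho}}=-\lambda_\rho.$$
Uniform $L^\infty$ bounds on $W_\rho$ follow from the exponential barrier argument of \cite[Lemma 2.1]{ES1989}, so that the half-relaxed limit $U_*(x):=\min_i\liminf_{\rho\to 0,\,y\to x}W_{\rho,i}(y)$ is well-defined and lower semi-continuous. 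Repeating Step 1 of the proof of Proposition \ref{liuprop-2} verbatim, with the Perron eigenvector $\mathbf{v}$ of $\mathrm{diag}\{d_i|\nabla\phi|^2\}+\mathbf{A}(x_0,t)$ used at any touching point $x_0$ to control the exponential couplings (and the Neumann condition handled in the viscosity sense exactly as done there), $U_*$ is shown to be a viscosity super-solution of $H(\nabla U,x,t)=\max_x\mu(\mathbf{A}(x,t))$. The super-solution half of Lemma \ref{liu-20240122} then yields $c(t)\le -\max_x\mu(\mathbf{A}(x,t))$.

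The principal difficulty is the super-solution construction: passing from the vector-valued eigenfunction $\phi_\rho$ through the logarithmic transform to a scalar viscosity super-solution of the Hamilton-Jacobi equation, and extracting the Hamiltonian through the Perron eigenvector of the perturbed matrix. This step is, however, a direct transcription of Step 1 of Proposition \ref{liuprop-2} with the $\omega\partial_t$ contribution set to zero, so no essentially new technical difficulty is expected beyond careful bookkeeping.
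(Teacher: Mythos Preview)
Your proof is correct but follows a genuinely different route from the paper. The paper works directly with the Lipschitz viscosity solution $U$ of \eqref{liu-20240329-1}: for $c(t)\le -\max_x\mu(\mathbf{A}(x,t))$ it invokes Rademacher's theorem to get $H(\nabla U,x,t)=-c(t)$ a.e.\ and then uses $H(\mathbf{p},x,t)\ge\mu(\mathbf{A}(x,t))$ pointwise; for the reverse inequality it carries out a minimum-point argument on $U$, with a delicate subdifferential analysis when the minimum lands on $\partial\Omega$. You instead sandwich $c(t)$ via Lemma~\ref{liu-20240122} (applied with the frozen-time Hamiltonian): $U\equiv 0$ is a classical sub-solution, giving $c(t)\ge-\max_x\mu$, and a vanishing-viscosity limit of the elliptic principal eigenfunction supplies an l.s.c.\ super-solution, giving $c(t)\le-\max_x\mu$. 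The two proofs swap the easy and hard directions: the paper's upper bound is a one-line Rademacher argument where you invoke the machinery of half-relaxed limits, whereas your lower bound via the zero sub-solution is much cleaner than the paper's boundary minimum-point analysis. Two minor remarks: the Perron-eigenvector trick you invoke (eigenvector of $\mathrm{diag}\{d_i|\nabla\phi|^2\}+\mathbf{A}(x_0,t)$) is actually the device from Step~1 of Proposition~\ref{prop-20240521} rather than Proposition~\ref{liuprop-2}; and Lemma~\ref{liu-20240122} is stated for the full $t$-dependent Hamiltonian, so you should note explicitly that its proof carries over verbatim when $H$ is replaced by the time-frozen $H(\cdot,\cdot,t)$.
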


\begin{proof}
    let $U $ be any Lipschitz viscosity solution of \eqref{liu-20240329-1} associated with $c(t)$, which is differentiable a.e. in $\Omega$ by  Radamacher theorem. By \eqref{liu-20240329-1} we have %For any $x_*\in \Omega$, let $\phi\in C^1(\Omega)$ be any text function such that $U -\varphi$ attains its local minimal at $x=x_*$. Then  by the definition of viscosity solution (or super-solution), one has
\begin{equation}\label{liu-20240329-2}
  {\rm H}(\nabla U , x,t)= -c(t) \quad\text{a.e. in }\,\, x\in \Omega.
\end{equation}
Recall from the definition of Hamiltonian ${\rm H}$ that ${\rm H}(\nabla U , x,t)\geq {\rm H}(0, x,t)=\mu({\bf A}(x,t))$, so that \eqref{liu-20240329-2} implies  $c(t)\leq -\mu({\bf A}(x,t))$ a.e. in $\Omega\times(0,1)$. By the continuity of $\mu({\bf A}(x,t))$, we conclude
 $c(t)\leq -\max_{x\in \overline{\Omega}}\mu({\bf A}(x,t))$ for all $t\in\mathbb{R}$. %To complete the proof,
 It remains to establish
\begin{equation}\label{liu-20240329-3}
c(t)\geq -\max_{x\in\overline{\Omega}} \mu({\bf A}(x,t)) \quad \text{ for all }\,\, t\in\mathbb{R}.
\end{equation}
%$c(t)\geq -\max_{x\in \overline{\Omega}}\mu(x,t)$.
%To this end, we
Assume $U $ attains its  local minimal at some point $x_*\in \overline{\Omega}$. We consider  two cases:

{\rm (1)}  If $x_*\in \Omega$, then $U -0$ attains its  local minimal at $x_*$. By the definition of viscosity solution (or super-solution), we arrive at
${\rm H}(0,x_*,t)\geq -c(t)$,
which implies $c(t)\geq -\mu(x_*,t)$. Hence, \eqref{liu-20240329-3}  holds.

{\rm (2)}  If $x_*\in \partial\Omega$, %we assume by contrary that $c(t)< -\max_{x\in\overline{\Omega}} \mu(x,t)$ and reach a contradiction.
we define $\mathcal{D}_{x_*}$ as the set of sub-differentials of $U$ at the point $x_*$.  In fact,
 $p\in \mathcal{D}_{x_*}$  if and only if there exists some $\phi\in C^{1}( \overline{\Omega})$ such that $\nabla\phi(x_*)=p$ and $x_*$ is a local minimum point of $U-\phi$. Since $x_*$ is a minimum point  and  $x_*\in\partial\Omega$, one can choose $\phi=-\sigma {\rm dist}(x, \partial\Omega)$ for any $\sigma\ge 0$ and verify
$[0,\infty)\subset \{\sigma\in\mathbb{R}: \sigma\nu (x_*)\in \mathcal{D}_{x_*}\}$.
% Clearly,
% \begin{equation}\label{liu-29}
% \{p\in \mathbb{R}^N: p\cdot \nu (x_0)\geq 0\}\subset D^-W.
% \end{equation}
Next, we define
\begin{equation*}
    \underline{\sigma}:=\inf\{\sigma\in\mathbb{R}: \sigma\nu (x_*)\in  \mathcal{D}_{x_*}\}\leq 0.
\end{equation*}
%Since $x_*$ is a minimum point of $U$ and  $x_*\in\partial\Omega$, it follows that   $[0,\infty)\subset \{\sigma\in\mathbb{R}: \sigma\nu (x_0)\in  D^-U\}$ (as one can choose $\phi=-\sigma {\rm dist}(x, \partial\Omega)$ for any $\sigma\ge 0$), and thus $\sigma_{\inf}\leq 0$.

We first consider the case  $ \underline{\sigma}<0$.
%then there exists some
Fix any $\sigma\in[\underline{\sigma}, 0)$ such that $\sigma\nu (x_*)\in  \mathcal{D}_{x_*}$.  %and
% \begin{equation}\label{liu-20240329-4}
%     |H(\sigma \nu(x_*),x_*,t)-\mu(x_*,t)|<-\max_{x\in\overline{\Omega}}\mu(x,t)-c(t).
% \end{equation}
We can choose some $\phi\in C^{1}( \overline{\Omega})$ such that $\nabla\phi(x_*)=\sigma\nu (x_*)$ and  $U-\phi$ attains a local minimum at $x=x_*$.   In view of  $\nabla\phi(x_*) \cdot \nu (x_*)=\sigma<0$,  by the definition of viscosity solutions to  \eqref{liu-20240329-1},  one has
$H(\sigma \nu(x_*),x_*,t)\geq -c(t)$, for which letting $\sigma\to 0$ gives  $c(t)\geq -\mu(x_*,t)$, which proves \eqref{liu-20240329-3}.
%$$H(\sigma \nu(x_*),x_*,t)-\mu(x_*,t)\geq -c(t)-\mu(x_*,t),$$ which  contradicts  \eqref{liu-20240329-4}. Hence,  \eqref{liu-20240329-3}  is proved.

It remains to consider the case $\underline{\sigma}=0$.
Given any $\delta>0$, % small such that $ {\color{red}130}\delta^2 <\min_{\bar{\Omega}}\hat{c}-\mu_*$, and
we define  $D_\delta:=B_\delta(x_*)\cap \overline{\Omega}$.
% $\alpha(\delta)>0$ and
% \begin{equation}\label{liu-30}
%     |\nabla \psi|^2 + \delta^2 <\min_{\bar{\Omega}}\hat{c}-\mu_* \quad\text{ in } \bar{D}_\delta.
% \end{equation}
%Choose  $\tilde{\delta}\in(0, \min\{\delta, \frac{\alpha(\delta)}{2\delta}\})$.
Set $\psi(x):=4(x-x_*)^2$. Assume $U-(\delta {\rm dist}(x, \partial\Omega)-\psi)$ attains its minimum  at a point $x_\delta\in \overline{D}_\delta$.  We claim $x_\delta\in D_\delta$. Indeed,  if $x_\delta\in\partial\Omega$, then there holds
$U(x_\delta, t)+\psi(x_\delta)\leq U(x_*,t)$,  so that $x_\delta=x_*$ (due to $x_*$ being a local minimal point of $U$). This yields $\nabla \psi(x_*)-\delta \nu(x_*)=-\delta \nu(x_*)\in \mathcal{D}_{x_*}$, and thus $\underline{\sigma}\leq -\delta<0$, contradicting the assumption  $\underline{\sigma}=0$. Hence, $x_\delta\not\in\partial\Omega$. If $x_\delta\in \Omega$ and $|x_\delta-x_*|=\delta$, then
$U(x_\delta, t)- (\delta {\rm dist}(x_\delta,\partial\Omega)-\psi(x_\delta))\leq U(x_*, t)$,
and thus $\delta {\rm dist}(x_\delta,\partial\Omega)\geq\psi(x_\delta)$.
In view of $\psi(x_\delta)=4\delta^2$ and $\delta {\rm dist}(x_\delta,\partial\Omega)\leq 2\delta^2$, this is a contradiction. Therefore, $x_\delta\in D_\delta$.

Define the text function $\phi=\delta {\rm dist}(x, \partial\Omega)-\psi$.  Since $U-(\delta {\rm dist}(x, \partial\Omega)-\psi)$ attains its minimum  at $x_\delta\in D_\delta\subset\Omega$, the definition of viscosity solutions to \eqref{liu-lem-20240329} implies %by  we deduce that at $(x_\delta,t_\delta)$, it holds that
\begin{equation*}
  {\rm H}(-\delta \nu(x_\delta)-8(x_\delta-x_*),x_\delta,t)\geq -c(t) \quad\text{ for all } t\in\mathbb{R}.
\end{equation*}
Hence, letting $\delta\to 0$ yields $c(t)\geq -\mu(x_*,t)$, which implies \eqref{liu-20240329-3}.
The proof is complete.
\end{proof}

\begin{proposition}\label{liu-20240326}
    Let $C(\vartheta)$ be the critical value of Hamilton-Jacobi equation \eqref{Liu01-20210716} as defined in Theorem {\rm\ref{TH-liu-20240106}}. Then there holds
    $$\lim_{\vartheta\to 0}C(\vartheta)=\underline{C}\quad\text{and}\quad \lim_{\vartheta\to +\infty}C(\vartheta)=C_*.$$
\end{proposition}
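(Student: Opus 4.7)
The plan is to establish the two limits by combining a direct viscosity sub-/super-solution construction for $\vartheta \to 0^+$ with a diagonal argument for $\vartheta \to +\infty$ invoking Propositions \ref{liuprop-2} and \ref{prop-20240521}. Since the monotonicity of $C(\vartheta)$ from Theorem \ref{liutheorem-0229-1} guarantees that both one-sided limits exist, it suffices to identify their values.

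For $\vartheta \to 0^+$, the lower bound $C(\vartheta) \geq \underline{C}$ is obtained by testing with the $x$-independent function
\[
U(x,t) := \frac{1}{\vartheta}\int_0^t \bigl[-\underline{C} - \max_{y\in \overline{\Omega}} \mu({\bf A}(y, s))\bigr]\, ds,
\]
which is $1$-periodic by the definition of $\underline{C}$ and satisfies $\vartheta \partial_t U + {\rm H}(\nabla U, x, t) = \mu({\bf A}(x,t)) - \max_y \mu({\bf A}(y,t)) - \underline{C} \leq -\underline{C}$; Lemma \ref{liu-20240122} then yields $C(\vartheta) \geq \underline{C}$. For the matching upper bound, at each $t$ apply Lemma \ref{liu-lem-20240329} to obtain a Lipschitz viscosity solution $W(\cdot, t)$ of the stationary cell problem ${\rm H}(\nabla_x W, x, t) = -c(t)$ with $c(t) := -\max_x \mu({\bf A}(x,t))$, chosen so that $W$ depends $1$-periodically on $t$. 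Setting
\[
U_\vartheta(x,t) := W(x,t) + \frac{1}{\vartheta}\int_0^t (c(s) - \underline{C})\, ds,
\]
the added term is $1$-periodic, and a direct computation gives $\vartheta \partial_t U_\vartheta + {\rm H}(\nabla U_\vartheta, x, t) = \vartheta \partial_t W - \underline{C}$. Assuming $W$ can be chosen with $\|\partial_t W\|_\infty \le M$, the function $U_\vartheta$ is a viscosity super-solution with constant $\underline{C} + \vartheta M$, whence $C(\vartheta) \leq \underline{C} + \vartheta M \to \underline{C}$ as $\vartheta \to 0^+$.

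For $\vartheta \to +\infty$, I argue by a diagonal procedure. Fix any sequence $\vartheta_k \to +\infty$. By Proposition \ref{prop-20240521}, choose $\rho_k > 0$ with $\rho_k \leq 1/\vartheta_k^3$ (so that $\omega_k := \vartheta_k \sqrt{\rho_k} \leq 1/\sqrt{\vartheta_k} \to 0$) and $|\lambda(\omega_k, \rho_k) - C(\vartheta_k)| \leq 1/k$. Since $(\omega_k, \rho_k) \to (0,0)$ with $\omega_k/\sqrt{\rho_k} = \vartheta_k \to +\infty$, Proposition \ref{liuprop-2} gives $\lambda(\omega_k, \rho_k) \to C_*$, so $C(\vartheta_k) \to C_*$. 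As $\{\vartheta_k\}$ is arbitrary and $C(\vartheta)$ is monotone, this yields $\lim_{\vartheta \to +\infty} C(\vartheta) = C_*$.

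The main technical obstacle is justifying the super-solution construction for the $\vartheta \to 0$ upper bound, which requires the family $\{W(\cdot, t)\}$ of stationary solutions to have a uniform Lipschitz bound in the $t$-variable. Since viscosity solutions of the cell problem are a priori only Lipschitz in $x$, the required $t$-regularity is not automatic, and translating the formal differentiation $\vartheta \partial_t W$ into a rigorous viscosity inequality in the $(x,t)$-variables also requires care. The plan is to regularize ${\bf A}(x,t)$ by convolution in $t$, solve the smoothed cell problem, transfer smoothness in $t$ to the solutions, and then pass to the limit using stability of viscosity solutions.
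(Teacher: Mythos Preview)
Your argument for $\vartheta\to 0$ coincides with the paper's: the paper also builds the test function $V(x,t)=\overline{U}(x,t)+\frac{1}{\vartheta}\int_0^t(c(s)-\underline{C})\,ds$ from the stationary solutions $\overline{U}(\cdot,t)$ supplied by Lemma~\ref{liu-lem-20240329}, computes $\vartheta\partial_t V+{\rm H}(\nabla V,x,t)=\vartheta\partial_t\overline{U}-\underline{C}$, and concludes via Lemma~\ref{liu-20240122}. The paper treats this single $V$ as simultaneously a sub- and super-solution with constant $\underline{C}\pm\epsilon$ for $\vartheta$ small, rather than splitting into two constructions as you do; but the content is the same. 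In particular, the paper also writes $\partial_t\overline{U}$ without justifying the $t$-regularity of the family of cell solutions, so the technical obstacle you flag is present in the paper as well and is simply passed over. Your proposed cure (mollify $\mathbf{A}$ in $t$, then pass to the limit by stability) is reasonable, but note that viscosity solutions of the cell problem are non-unique, so you must also explain how to \emph{select} $W(\cdot,t)$ so that it varies Lipschitz in $t$ after smoothing; merely smoothing the data does not automatically give this.

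Your argument for $\vartheta\to+\infty$ is genuinely different and shorter. The paper works directly with the viscosity solutions $U_n$ of \eqref{Liu01-20210716} at $\vartheta_n\to+\infty$, forms $U_*(x)=\lim_n\inf_t U_n(x,t)$ and $U^*(x)=\lim_n\sup_t U_n(x,t)$, and uses a perturbed--test--function argument to show these are viscosity super-/sub-solutions of the time-averaged problem $\widehat{\rm H}(\nabla U,x)=-c_\infty$, whose critical value is identified as $C_*$. Your diagonal argument bypasses all of this by feeding Proposition~\ref{prop-20240521} and Proposition~\ref{liuprop-2} into one another, which is correct and avoids the additional viscosity machinery; the trade-off is that it relies on the parabolic eigenvalue problem rather than being a self-contained Hamilton--Jacobi argument. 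One organizational remark: your appeal to the monotonicity of $C(\vartheta)$ from Theorem~\ref{liutheorem-0229-1} is a forward reference in the paper's ordering (that theorem is proved in Section~\ref{monotonicity-section}), though the dependence is not circular. You do not actually need it: your diagonal argument already handles arbitrary sequences $\vartheta_k\to+\infty$, and for $\vartheta\to 0$ your two-sided bounds $\underline{C}\le C(\vartheta)\le\underline{C}+\vartheta M$ give the limit directly.
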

\begin{proof}
    {\it Step 1}. We first prove  $C(\vartheta)\to\underline{C}$ as $\vartheta\to 0$.   For each $t\in \mathbb{R}$, by Lemma  \ref{liu-lem-20240329}  we denote $\overline{U}(\cdot,t)$ by any  Lipschitz viscosity solution of \eqref{liu-20240329-1} associated with %the value
    $c(t)=-\max_{x\in\overline{\Omega}}\mu({\bf A}(x,t))$. We define the time-periodic function
 % Set
 %    $$\mathcal{S}:=\{\eta\in C^1(\mathbb{R};\mathbb{R}^n): \eta(t)=\eta(t+1)\, \text{ and }\, \eta(t)\in \Omega \,\,\text { for all }t\in\mathbb{R})\}$$
    % Given any function $\eta\in C^1(\mathbb{R}; \mathbb{R}^n)$ satisfying $\eta(t)=\eta(t+1)$ and $\eta(t)\in \Omega$, we define
    $$V(x,t)=\frac{1}{\vartheta}\left[\int_0^ t\max_{x\in\overline{\Omega}}\mu({\bf A}(x,s)){\rm d}s-\underline{C} t\right]+\overline{U}(x,t), \quad (x,t)\in{\Omega}\times\mathbb{R}.$$
%Note that $H(0,x,t)=\mu(x,t)$.
By \eqref{liu-20240329-1}, direct calculations yield
    %Let $U_\vartheta$ be any Lipschitz viscosity solution of \eqref{Liu01-20210716} associated to $C(\vartheta)$. Given any $(x_*,t_*)\in \Omega\times (0,1)$, we choose some smooth function $\phi$ such that $U_\vartheta-\phi$ attains its local maximum  at point $(x_*,t_*)$. By the definition of viscosity sub-solution, it holds that %at $(x_*,t_*)$,
    \begin{align*}
        \vartheta\partial_t V+{\rm H} (\nabla V,x,t)=\max_{x\in\overline{\Omega}}\mu({\bf A}(x,t))-\underline{C}+\vartheta\partial_t \overline{U}+{\rm H} (\nabla \overline{U},x,t)=\vartheta\partial_t \overline{U}-\underline{C}
    \end{align*}
    in the sense of viscosity solutions. Hence, for any $\epsilon>0$, we can choose $\vartheta>0$ small such that the Lipschitz function $V$  is a viscosity solution of
    \begin{equation*}
\begin{cases}
\begin{array}{ll}
\smallskip
\vartheta\partial_{t}V+{\rm H}(\nabla V, x,t) \geq -\underline{C}-\epsilon    &\text{in }\,\,\Omega\times \mathbb{R},\\
\smallskip
\vartheta\partial_{t}V+{\rm H}(\nabla V, x,t) \leq -\underline{C}+\epsilon    &\text{in }\,\,\Omega\times \mathbb{R},\\
\smallskip
\nabla V\cdot\nu =0 &\text{on }\,\partial\Omega\times \mathbb{R},\\
V(x,t)= V(x,t+1) &\text{in }\,\,\Omega\times \mathbb{R}.
\end{array}
\end{cases}
\end{equation*}
Then by applying Lemma \ref{liu-20240122} we conclude that
$$\underline{C}-\epsilon\leq\liminf_{\vartheta\to 0} C(\vartheta)\leq\limsup_{\vartheta\to 0} C(\vartheta)\leq \underline{C}+\epsilon.$$
Letting $\epsilon\to 0$ yields the desired result. Step 1 is complete.

\smallskip

 {\it Step 2}. We next prove  $C(\vartheta)\to C_*$ as $\vartheta\to +\infty$. Fix any sequence $\{\vartheta_n\}_{n\geq 1}$ such that $\vartheta_n\to +\infty$ as $n\to \infty$.
Let $U_n$ be any Lipschitz viscosity solution of \eqref{Liu01-20210716}  associated to $C(\vartheta_n)$. Up to extraction, we assume $C(\vartheta_n)\to c_\infty$ as $n\to \infty$. It suffices to show $c_\infty=C_*$.
Set
 \begin{equation}\label{liu-20240330-1}
   U_*(x):=\lim\limits_{n\to\infty}\inf_{t\in[0, 1]}U_n(x,t) \quad\text{and}\quad U^*(x):=\lim\limits_{n\to \infty}\sup_{t\in [0,1]}U_n(x,t).
\end{equation}

{\rm (1)} We show that $U_*$ and $U^*$ are respectively the   viscosity super-solution and sub-solution of the time-independent Hamilton-Jacobi equation:
 \begin{equation}\label{liu-20240329-1-1}
% \begin{cases}
% \begin{array}{ll}
% \smallskip
\widehat{{\rm H}}(\nabla U, x) =-c_\infty    \,\,\text{ in }\,\,\Omega,\qquad
\nabla U_*\cdot\nu =0 \,\,\text{ on }\,\partial\Omega,
%U(x,t)= U(x,t+1) &\text{in }\,\,\Omega\times \mathbb{R},
% \end{array}
% \end{cases}
\end{equation}
where  $\widehat{\rm H}({\bf p},x)=\int_0^1 {\rm H}({\bf p},x,t){\rm d}t$ for any $({\bf p},x)\in \mathbb{R}^n\times\Omega$. We shall verify that $U_*$ is a   viscosity super-solution of \eqref{liu-20240329-1-1}, then the fact that $U^*$ is a viscosity sub-solution can be proved by the same arguments.

By the definition of viscosity super-solutions, we fix any test function $\phi\in C^1(\Omega)$ and assume that $U_*-\phi$ attains a strict local minimum point $x_*\in \overline{\Omega}$. We must prove
 \begin{equation}\label{liu-20240330-1-2}
\begin{cases}
\begin{array}{ll}
\smallskip
\widehat{{\rm H}}(\nabla\phi(x_*),x_*)\geq -c_\infty  &\text{if }\,\,x_* \in \Omega,\\
%\partial_{t}\phi+|\nabla \phi|^2-\hat{c}(x_0)\geq -\mu _*
\max\{\nabla \phi\cdot \nu ,\,\, \widehat{\rm H}(\nabla\phi,x_*)+c_\infty\}\geq 0
 &\text{if }\,\,x_* \in \partial \Omega. % \text{ and }{\color{red} \nabla \phi(x_0, t_0)\cdot \nu (x_0)\leq 0}.
\end{array}
\end{cases}
\end{equation}
We only show the first inequality in \eqref{liu-20240330-1-2} in the case $x_*\in\Omega$, then the second inequality for $x_* \in \partial \Omega$ follows by the same arguments  in the proof of Proposition \ref{liuprop-2}.

Set ${\bf p}_*:=\nabla\phi(x_*)$. Suppose on the contrary that
\begin{equation}\label{liu-20240330-2}
    \widehat{{\rm H}}({\bf p}_*,x_*)+c_\infty<0.
\end{equation}
We define the perturbed test function as
$$\phi_n(x,t)=\phi(x)+\frac{1}{\vartheta_n}\left[t\widehat{{\rm H}}({\bf p}_*,x_*)-\int_0^t {\rm H}({\bf p}_*,x_*,s){\rm d}s\right], \quad \forall (x,t)\in \Omega\times\mathbb{R},$$
which is $1$-periodic in $t$.
By direct calculations, one obtains
\begin{align*}
    &\vartheta\partial_t \phi_n+{\rm H} (\nabla \phi_n,x,t)\\
    =&\widehat{{\rm H}}({\bf p}_*,x_*)-{\rm H}({\bf p}_*,x_*,t)+{\rm H}(\nabla \phi(x),x,t)\\
    =&-C(\vartheta_n)+(\widehat{{\rm H}}({\bf p}_*,x_*)+C(\vartheta_n))+({\rm H}(\nabla \phi(x),x,t)-{\rm H}({\bf p}_*,x_*,t)).
\end{align*}
Hence, by \eqref{liu-20240330-2} and the continuity of Hamiltonian ${\rm H}$, one can choose some $r>0$ small such that $B_r(x_*)\subset \Omega$ and
\begin{equation}\label{liu-20240330-3}
    \vartheta_n\partial_t \phi_n+{\rm H} (\nabla \phi_n,x,t)<-C(\vartheta_n), \quad \forall (x,t)\in B_r(x_*)\times \mathbb{R},
\end{equation}
provided that $n$ is sufficiently large.
Recall that $U_n$ is a Lipschitz viscosity solution of \eqref{Liu01-20210716}. We can apply the comparison principle to \eqref{Liu01-20210716} and \eqref{liu-20240330-3} to derive that
$$\min_{B_r(x_*)\times \mathbb{R}} (U_n-\phi_n)\geq \min_{\partial B_r(x_*)\times \mathbb{R}} (U_n-\phi_n).$$
By letting $n\to \infty$, it follows from the definition of $U_*$ in \eqref{liu-20240330-1} that
$$\min_{B_r(x_*)} (U_*-\phi)\geq \min_{\partial B_r(x_*)} (U_*-\phi),$$
which contradicts to the fact that  $x_*$ is a strict local minimum point of $U_*-\phi$. Therefore, \eqref{liu-20240330-1-2} holds and $U_*$ is a  viscosity solution of \eqref{liu-20240329-1-1}.

{\rm (2)} We compete the proof.
% By the comparison principle for \eqref{liu-20240329-1-1} (see e.g. ), the assertion in {\rm (1)} implies $U_*\geq U^*$
%the problem
%  \begin{equation}\label{liu-20240329-5}
% % \begin{cases}
% % \begin{array}{ll}
% % \smallskip
% \widehat{{\rm H}}(\nabla U^*, x) \leq -c_\infty    \,\,\text{ in }\,\,\Omega,\qquad
% \nabla U^*\cdot\nu =0 \,\, \text{ on }\,\partial\Omega.
% %U(x,t)= U(x,t+1) &\text{in }\,\,\Omega\times \mathbb{R},
% % \end{array}
% % \end{cases}
% \end{equation}
%By the same arguments as in
We first claim that the unique  value such that %the problem
 \begin{equation}\label{liu-20240723-1}
% \begin{cases}
% \begin{array}{ll}
% \smallskip
\widehat{{\rm H}}(\nabla U, x) = -c    \,\, \text{ in }\,\,\Omega,\qquad
\nabla U\cdot\nu =0\,\, \text{ on }\,\partial\Omega
%U(x,t)= U(x,t+1) &\text{in }\,\,\Omega\times \mathbb{R},
% \end{array}
% \end{cases}
\end{equation}
admits a  Lipschitz viscosity solution is $c=C_*$.  Indeed, noting that  $\widehat{{\rm H}}(\nabla U , x)\geq \widehat{{\rm H}}(0, x)=\int_0^1 \mu({\bf A}(x,t)){\rm d}t$, we observe from  \eqref{liu-20240723-1} that $c\leq -\int_0^1 \mu({\bf A}(x,t)){\rm d}t$ for all  $x\in\Omega$, and thus $c\leq C_*$.
Then the assertion $c\geq C_*$ can be similarly established following the arguments in Lemma \ref{liu-lem-20240329}, thus the claim is proved.
Furthermore,  the assertion in {\rm (1)} indicates that problem \eqref{liu-20240723-1} admits both viscosity super-solution and viscosity sub-solution when $c=c_\infty$.
By apply Lemma \ref{liu-20240122}, we can infer that $c_\infty$ is exactly the critical value of \eqref{liu-20240723-1}, i.e. $c_\infty=C_*$. The proof is now complete.
\end{proof}
We are in a position to prove Theorem \ref{TH-liu-20240106}.
\begin{proof}[Proof of Theorem  {\rm \ref{TH-liu-20240106}}]

By Propositions \ref{liuprop-2}, \ref{prop-20240521} and \ref{liu-20240326}, it remains to show
\begin{equation}\label{liu-20240531-3}
    \lim_{(\omega,\rho)\to (0,0) \atop \frac{\omega}{\sqrt{\rho}}\to 0}\lambda(\omega, \rho)=\underline{C}=-\int_0^1 \max_{x\in \overline{\Omega}} \mu({\bf A}(x,t)){\rm d}t.
\end{equation}
The proof of \eqref{liu-20240531-3} is postponed to the end of section \ref{monotonicity-section}, as the monotonicity result in Theorem \ref{TH1-1} is required. The proof is complete.
\end{proof}

\section{\bf Monotonicity of the principal eigenvalue}\label{monotonicity-section}

%\begin{proof}[Proof of Theorem {\rm\ref{TH2}}]

%\end{proof}

% The adjoint problem to \eqref{Liu1} can be written as
% \begin{equation}\label{Liu2}
%  \left\{
% \begin{array}{ll}
% \medskip
% -\omega\partial_t \psi-D\Delta\psi-A^{\mathrm{T}}(x,t)\psi=\lambda\psi~~&\mathrm{in}~\Omega\times \mathbb{R},\\
% \medskip
%   \nabla \psi\cdot\nu =0,~~&\mathrm{on}~\partial\Omega\times\mathbb{R},\\
%   \psi(x,t)=\psi(x,t+1),&\mathrm{in}~\Omega\times \mathbb{R}.
%   \end{array}
%  \right.
%  \end{equation}
In this section, we shall establish the monotone dependent of the principal eigenvalue %$\lambda(\omega,\rho)$
on frequency $\omega$ and diffusion rate $\rho$. %To this end, we fix $\rho=1$ and write $\lambda(\omega,\rho)=\lambda(\omega)$ without loss of generality.

The adjoint problem of \eqref{Liu1} can be written as
\begin{equation}\label{Liu2}
 \left\{
\begin{array}{ll}
\medskip
-\omega\partial_t {\bm \psi}-\rho {\bf D}\Delta{\bm \psi}-{\bf A}(x,t){\bm \psi}=\lambda{\bm \psi}~~&\mathrm{in}~\Omega\times \mathbb{R},\\
\medskip
  \nabla {\bm \psi}\cdot\nu =0,~~&\mathrm{on}~\partial\Omega\times\mathbb{R},\\
  {\bm \psi}(x,t)={\bm \psi}(x,t+1),&\mathrm{in}~\Omega\times \mathbb{R}.
  \end{array}
 \right.
 \end{equation}
We first prepare the following equality holds.
\begin{lemma}\label{L2}
Let ${\bm \varphi}=(\varphi_1,\cdots,\varphi_n)>0$ and ${\bm\psi}=(\psi_1,\cdots,\psi_n)>0$ denote respectively  the principal eigenfunctions of \eqref{Liu1} and \eqref{Liu2} associated with %the principle eigenvalue
$\lambda(\omega,\rho)$. Then there holds % the following equality holds:
\begin{equation}\label{W1}
\begin{split}
    2 \omega\int_0^1\!\!\!\int_\Omega  {\bm\psi}^{\rm T} \partial_t {\bm \varphi}
    =& \rho \sum_{i=1}^n d_i \int_0^1\!\!\!\int_\Omega\varphi_i\psi_i\left|\nabla\log \left(\frac{\psi_i}{\varphi_i}\right)\right|^2\\
    &+\frac{1}{2}\sum_{i,j=1}^n\int_0^1\!\!\!\int_\Omega a_{ij}( \varphi_j\psi_i-\psi_j \varphi_i)\log\left(\frac{\psi_i\varphi_j}{\varphi_i\psi_j}\right).
    \end{split}
\end{equation}
%whereas $\psi=(\psi_1,\cdots,\psi_n)>0$ denotes the principal  eigenfunction of \eqref{Liu2} associated to the principle eigenvalue $\lambda(\omega,\rho)$.
\end{lemma}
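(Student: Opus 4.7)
\medskip
\noindent\textbf{Proof plan.}
The strategy is to test the equation for ${\bm\varphi}$ against $\psi_i\log(\psi_i/\varphi_i)$ and the adjoint equation for ${\bm\psi}$ against $-\varphi_i\log(\psi_i/\varphi_i)$, then sum so that the $\lambda$-contributions cancel. Set $z_i:=\log(\psi_i/\varphi_i)$. Multiplying the $i$-th equation of \eqref{Liu1} by $\psi_i z_i$, and the $i$-th adjoint equation \eqref{Liu2} by $-\varphi_i z_i$, then adding, I obtain pointwise
\begin{equation*}
\omega z_i\,\partial_t(\varphi_i\psi_i)=\rho d_i\,z_i\bigl(\psi_i\Delta\varphi_i-\varphi_i\Delta\psi_i\bigr)+z_i\sum_{j=1}^n a_{ij}\bigl(\psi_i\varphi_j-\varphi_i\psi_j\bigr),
\end{equation*}
where the $\lambda$ terms drop out. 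I now sum over $i$ and integrate over $\Omega\times(0,1)$, treating the three resulting terms separately.

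For the time term I would rewrite $z_i\partial_t(\varphi_i\psi_i)=\partial_t(z_i\varphi_i\psi_i)-\varphi_i\psi_i\partial_t z_i$. Time periodicity kills the first piece, while $\partial_t z_i=\partial_t\psi_i/\psi_i-\partial_t\varphi_i/\varphi_i$ gives $\varphi_i\psi_i\partial_t z_i=\varphi_i\partial_t\psi_i-\psi_i\partial_t\varphi_i$. Using periodicity once more in the form $\int\varphi_i\partial_t\psi_i=-\int\psi_i\partial_t\varphi_i$, the left-hand side collapses to exactly $2\omega\int_0^1\!\!\int_\Omega{\bm\psi}^{\rm T}\partial_t{\bm\varphi}$, which matches the claimed identity.

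For the diffusion term I would use $\psi_i\Delta\varphi_i-\varphi_i\Delta\psi_i=\nabla\!\cdot\!(\psi_i\nabla\varphi_i-\varphi_i\nabla\psi_i)$ and integrate by parts in $x$; the boundary term vanishes because $\nabla\varphi_i\cdot\nu=\nabla\psi_i\cdot\nu=0$. Then, using $\nabla z_i=(\varphi_i\nabla\psi_i-\psi_i\nabla\varphi_i)/(\varphi_i\psi_i)$, the direct algebraic identity
\begin{equation*}
\nabla z_i\cdot(\psi_i\nabla\varphi_i-\varphi_i\nabla\psi_i)=-\varphi_i\psi_i|\nabla z_i|^2
\end{equation*}
produces the first term $\rho\sum_i d_i\int\varphi_i\psi_i|\nabla\log(\psi_i/\varphi_i)|^2$.

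For the matrix (reaction) term I would exploit the symmetry $a_{ij}=a_{ji}$. Swapping the summation indices shows
\begin{equation*}
\sum_{i,j}a_{ij}z_i(\psi_i\varphi_j-\varphi_i\psi_j)=-\sum_{i,j}a_{ij}z_j(\psi_i\varphi_j-\varphi_i\psi_j),
\end{equation*}
so averaging these two expressions gives the symmetrized form $\tfrac12\sum_{i,j}a_{ij}(z_i-z_j)(\psi_i\varphi_j-\varphi_i\psi_j)$, which upon writing $z_i-z_j=\log(\psi_i\varphi_j/(\varphi_i\psi_j))$ is precisely the second right-hand side of \eqref{W1}. The only real obstacle is bookkeeping — getting signs right in the integration by parts and double-checking the symmetrization — but no additional ideas are needed: the identity is essentially entropy-dissipation accounting, and the symmetry of ${\bf A}$ together with the Neumann condition and time periodicity carry the entire argument.
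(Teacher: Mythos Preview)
Your proposal is correct and follows essentially the same approach as the paper: test \eqref{Liu1} against $\psi_i\log(\psi_i/\varphi_i)$, test \eqref{Liu2} against $\varphi_i\log(\psi_i/\varphi_i)$, subtract so the $\lambda$-terms cancel, and then identify the time, diffusion, and coupling pieces using periodicity, the Neumann condition, and the symmetry of ${\bf A}$. Your organization is slightly tidier --- you combine the equations pointwise before integrating and use the divergence form $\psi_i\Delta\varphi_i-\varphi_i\Delta\psi_i=\nabla\!\cdot(\psi_i\nabla\varphi_i-\varphi_i\nabla\psi_i)$ directly --- but the underlying computation is the same as the paper's.
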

\begin{proof}
% which can be normalized by $\int\psi^{\mathrm{T}}\varphi=1$. %Set $L^{i}=\omega\partial_t- d_i\triangle, i=1,2$.
Multiply both sides of \eqref{Liu1} by $\psi_i\log(\psi_i/\varphi_i)$ and integrate the resulting equation over $\Omega\times(0,1)$, then by the boundary conditions in  \eqref{Liu1} we calculate that
\begin{align}\label{liu-20231230-1}
    &\lambda(\omega, \rho)\int_0^1\!\!\!\int_\Omega \varphi_i\psi_i\log(\psi_i/\varphi_i)\notag\\
    =&\omega\int_0^1\!\!\!\int_\Omega\psi_i\log(\psi_i/\varphi_i) \partial_t\varphi_i -\rho d_i\int_0^1\!\!\!\int_\Omega \psi_i\log(\psi_i/\varphi_i)\Delta\varphi_i\notag\\
    &-\sum_{j=1}^n\int_0^1\!\!\!\int_\Omega a_{ij}\varphi_j\psi_i\log(\psi_i/\varphi_i)\notag\\
    =&-\omega\int_0^1\!\!\!\int_\Omega \varphi_i \log(\psi_i/\varphi_i) \partial_t \psi_i-\omega\int_0^1\!\!\!\int_\Omega \varphi_i \psi_i \partial_t \log(\psi_i/\varphi_i)\\
    &+\rho d_i\int_0^1\!\!\!\int_\Omega  \log(\psi_i/\varphi_i) \nabla\varphi_i\cdot \nabla \psi_i+\rho d_i\int_0^1\!\!\!\int_\Omega  \psi_i \nabla\varphi_i\cdot \nabla \log(\psi_i/\varphi_i)\notag\\
    &-\sum_{j=1}^n\int_0^1\!\!\!\int_\Omega a_{ij}\varphi_j\psi_i\log(\psi_i/\varphi_i).\notag
\end{align}
Similarly, multiply both sides of \eqref{Liu2}  by $\varphi_i\log(\psi_i/\varphi_i)$ and integrate  over $\Omega\times(0,1)$, then %we have
\begin{align}\label{liu-20231230-2}
&\lambda(\omega,\rho)\int_0^1\!\!\!\int_\Omega \varphi_i\psi_i\log(\psi_i/\varphi_i)\notag\\
    =&-\omega\int_0^1\!\!\!\int_\Omega \varphi_i \log(\psi_i/\varphi_i) \partial_t \psi_i+\rho d_i\int_0^1\!\!\!\int_\Omega  \log(\psi_i/\varphi_i) \nabla\varphi_i\cdot \nabla \psi_i\\
    &+\rho d_i\int_0^1\!\!\!\int_\Omega  \varphi_i \nabla\psi_i\cdot \nabla \log(\psi_i/\varphi_i)-\sum_{j=1}^n\int_0^1\!\!\!\int_\Omega a_{ij}\psi_j\varphi_i\log(\psi_i/\varphi_i).\notag
\end{align}
Subtracting equations \eqref{liu-20231230-1} and \eqref{liu-20231230-2} and using the symmetry of matrix ${\bf A}$ yield
\begin{align}\label{liu-20240120-1}
    &\omega\int_0^1\!\!\!\int_\Omega \varphi_i \psi_i \partial_t \log(\psi_i/\varphi_i)\notag\\
    =&\rho d_i\int_0^1\!\!\!\int_\Omega  \psi_i \nabla \varphi_i\cdot\nabla \log(\psi_i/\varphi_i)-\rho d_i\int_0^1\!\!\!\int_\Omega \varphi_i \nabla\psi_i \cdot \nabla \log(\psi_i/\varphi_i)\notag\\
    &-\sum_{j=1}^n\int_0^1\!\!\!\int_\Omega a_{ij} \varphi_j\psi_i\log(\psi_i/\varphi_i)+\sum_{j=1}^n\int_0^1\!\!\!\int_\Omega a_{ji}\psi_j\varphi_i\log(\psi_i/\varphi_i)\notag\\
     =&-\rho d_i\int_0^1\!\!\!\int_\Omega  \frac{\psi_i |\nabla \varphi_i|^2}{\varphi^2_i}+2\rho d_i\int_0^1\!\!\!\int_\Omega  \nabla \varphi_i\cdot \nabla \psi_i-\rho d_i\int_0^1\!\!\!\int_\Omega  \frac{\varphi_i |\nabla \psi_i|^2}{\psi^2_i}%-\rho d_i\int_0^1\!\!\!\int_\Omega \varphi_i \nabla\psi_i \cdot \nabla \log(\psi_i/\varphi_i)
     \\
    &-\sum_{j=1}^n\int_0^1\!\!\!\int_\Omega a_{ij} (\varphi_j\psi_i-\psi_j\varphi_i)\log(\psi_i/\varphi_i)\notag\\
    =&-\rho d_i \int_0^1\!\!\!\int_\Omega\varphi_i\psi_i\left|\nabla\log \left(\frac{\psi_i}{\varphi_i}\right)\right|^2\notag\\
    &-\frac{1}{2}\sum_{j=1}^n\int_0^1\!\!\!\int_\Omega a_{ij}(\varphi_j\psi_i-\psi_j\varphi_i)\log\left(\frac{\psi_i\varphi_j}{\varphi_i\psi_j}\right).\notag
\end{align}
Observe that
$$\int_0^1\!\!\!\int_\Omega \varphi_i \psi_i \partial_t \log(\psi_i/\varphi_i)=-2\int_0^1\!\!\!\int_\Omega \psi_i \partial_t \varphi_i.$$
Then  summing equality \eqref{liu-20240120-1} from $i$ to $n$ yields \eqref{W1}. The proof is complete.
% \begin{align*}
%     2 \omega\int_0^1\!\!\!\int_\Omega  \psi^{\rm T} \partial_t \varphi
%     = \sum_{i=1}^n d_i \int_0^1\!\!\!\int_\Omega\varphi_i\psi_i\left|\nabla\log \left(\frac{\psi_i}{\varphi_i}\right)\right|^2-\sum_{i,j=1}^n\int_0^1\!\!\!\int_\Omega a_{ij} \varphi_j\psi_i\log\left(\frac{\psi_i\varphi_j}{\varphi_i\psi_j}\right).
%     \end{align*}
 \end{proof}

Based on the equality in Lemma \ref{L2}, %We conclude this section by %providing the proof of
%proving
we can establish %Theorem {\rm \ref{liutheorem-0229}}.
the following estimate.
\begin{theorem}\label{liutheorem-0229}
%  Suppose that matrix ${\bf A}(x,t)$ is symmetric for all $(x,t)\in \Omega\times\mathbb{R}$.
Let $U_{\omega/\sqrt{\rho}}$ be any Lipschitz  viscosity solution of \eqref{Liu01-20210716} with $\vartheta=\omega/\sqrt{\rho}$ corresponding to  the critical value $C(\omega/\sqrt{\rho})$.
Let ${\bm \varphi}=(\varphi_1,\cdots,\varphi_n)
>0$ %, normalized by $\int_0^1\!\!\int_\Omega \varphi^{\rm T}\psi=1$,  %and $\psi=(\psi_1,\cdots,\psi_n)>0$
denote the principal eigenfunction of problem \eqref{Liu1}. % and \eqref{Liu2}, respectively.
Then
there holds %following inequality holds:
\begin{align*}%\label{liu0412-3}
\begin{split}
  \lambda(\omega,\rho)-C(\omega/\sqrt{\rho})
    \geq  \,&\rho\sum_{i=1}^n\int_0^1\!\!\!\int_\Omega d_i|\nabla \sqrt{\varphi_i\psi_i}|^2\\ &+\sum_{i=1}^n\int_0^1\!\!\!\int_\Omega \varphi_i\psi_i \left|\frac{\sqrt{\rho}}{2}\nabla\log (\frac{\varphi_i}{\psi_i})+\nabla U_{\omega/\sqrt{\rho}} \right|^2,
    \quad \forall \omega,\rho>0,
   \end{split}
\end{align*}
whereas ${\bm \psi}=(\psi_1,\cdots,\psi_n)>0$ normalized by $\int_0^1\!\!\int_\Omega {\bm \varphi}^{\rm T}{\bm \psi}=1$   denotes the principal eigenfunction of the adjoint problem \eqref{Liu2}.
% of \eqref{Liu1} given by
% \begin{equation}\label{Liu2}
%  \left\{
% \begin{array}{ll}
% \medskip
% -\omega\partial_t {\bm \psi}-\rho {\bf D}\Delta{\bm \psi}-{\bf A}(x,t){\bm \psi}=\lambda{\bm \psi}~~&\mathrm{in}~\Omega\times \mathbb{R},\\
% \medskip
%   \nabla {\bm \psi}\cdot\nu =0,~~&\mathrm{on}~\partial\Omega\times\mathbb{R},\\
%   {\bm \psi}(x,t)={\bm \psi}(x,t+1),&\mathrm{in}~\Omega\times \mathbb{R}.
%   \end{array}
%  \right.
%  \end{equation}
\end{theorem}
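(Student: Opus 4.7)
The plan is to combine the exact identity in Lemma \ref{L2} with a Donsker--Varadhan-type duality that inserts the Lipschitz viscosity solution $U_\vartheta$ via the Rayleigh characterization of $H$. Introduce the symmetric/antisymmetric coordinates $\Phi_i := \sqrt{\varphi_i\psi_i}$ and $h_i := \tfrac12\log(\varphi_i/\psi_i)$, so that $\varphi_i=\Phi_ie^{h_i}$, $\psi_i=\Phi_ie^{-h_i}$, $\nabla\varphi_i\cdot\nabla\psi_i=|\nabla\Phi_i|^2-\Phi_i^2|\nabla h_i|^2$, $|\nabla\log(\psi_i/\varphi_i)|=2|\nabla h_i|$, $\varphi_j\psi_i=\Phi_i\Phi_je^{h_j-h_i}$, and the normalization $\int\bm{\varphi}^{\rm T}\bm{\psi}=1$ becomes $\int\sum_i\Phi_i^2=1$. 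Note $|\nabla\sqrt{\varphi_i\psi_i}|=|\nabla\Phi_i|$ and $\tfrac{\sqrt{\rho}}{2}\nabla\log(\varphi_i/\psi_i)=\sqrt{\rho}\nabla h_i$, so these variables match the theorem's right-hand side.

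\emph{Step 1 (exact formula for $\lambda$).} Multiplying \eqref{Liu1} by $\bm{\psi}$, integrating, summing over $i$, and substituting Lemma \ref{L2} for $\omega\int\bm{\psi}^{\rm T}\partial_t\bm{\varphi}$ yields
\begin{equation*}
\lambda(\omega,\rho) = \rho\sum_i d_i\int|\nabla\Phi_i|^2 + \rho\sum_i d_i\int\Phi_i^2|\nabla h_i|^2 + \sum_{i,j}\int a_{ij}\Phi_i\Phi_j\bigl[u\sinh u - e^{u}\bigr],
\end{equation*}
where $u := h_j-h_i$.

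\emph{Step 2 (cross-identity and Rayleigh bound).} Subtracting the equations \eqref{Liu1} and \eqref{Liu2} after multiplication by $\psi_i$ and $\varphi_i$ respectively, the $\lambda$-terms cancel and one obtains
$\omega\partial_t(\varphi_i\psi_i)=2\rho d_i\nabla\cdot(\Phi_i^2\nabla h_i)+\sum_j a_{ij}(\psi_i\varphi_j-\varphi_i\psi_j)$, using $\psi_i\nabla\varphi_i-\varphi_i\nabla\psi_i=2\Phi_i^2\nabla h_i$. Integrate against $U_\vartheta$ over $\Omega\times(0,1)$ and sum over $i$: the interaction sum vanishes by symmetry of ${\bf A}$, and the spatial boundary term vanishes from the Neumann BC on $\varphi_i,\psi_i$ alone, so no BC for $U_\vartheta$ is required. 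This yields
\begin{equation*}
\vartheta\int(\partial_t U_\vartheta)\sum_i\Phi_i^2 = 2\sqrt{\rho}\sum_i d_i\int\Phi_i^2\,\nabla h_i\cdot\nabla U_\vartheta,\qquad\vartheta := \omega/\sqrt{\rho}.
\end{equation*}
Apply the Rayleigh principle to the symmetric matrix ${\rm diag}(d_i|\nabla U_\vartheta|^2)+{\bf A}$ pointwise with test vector $w_i=\Phi_i(x,t)$: $H(\nabla U_\vartheta,x,t)\sum_i\Phi_i^2 \geq \sum_i d_i|\nabla U_\vartheta|^2\Phi_i^2+\sum_{i,j}a_{ij}\Phi_i\Phi_j$. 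Integrate this together with the a.e.\ HJ equation $\vartheta\partial_t U_\vartheta+H(\nabla U_\vartheta)=-C(\vartheta)$ (valid as $U_\vartheta\in W^{1,\infty}$ is classically differentiable a.e.\ by Rademacher), substitute the cross-identity, and complete the square in $\sqrt{\rho}\nabla h_i+\nabla U_\vartheta$ to get
\begin{equation*}
\rho\sum_i d_i\int\Phi_i^2|\nabla h_i|^2 \geq C(\vartheta)+\sum_i d_i\int\Phi_i^2\bigl|\sqrt{\rho}\nabla h_i+\nabla U_\vartheta\bigr|^2+\sum_{i,j}\int a_{ij}\Phi_i\Phi_j.
\end{equation*}

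\emph{Step 3 (conclusion) and main obstacle.} Combining the last two displays gives $\lambda-C(\vartheta) \geq \rho\sum_i d_i\int|\nabla\Phi_i|^2 + \sum_i d_i\int\Phi_i^2|\sqrt{\rho}\nabla h_i+\nabla U_\vartheta|^2 + R$, with residual
$R = \sum_{i,j}\int a_{ij}\Phi_i\Phi_j[1+u\sinh u - e^{u}]$; symmetrizing in $i\leftrightarrow j$ via $a_{ij}=a_{ji}$, $R = \sum_{i,j}\int a_{ij}\Phi_i\Phi_j\bigl[u\sinh u - 2\sinh^2(u/2)\bigr]$, which is nonneg by the elementary inequality $u\sinh u \geq 2\sinh^2(u/2)$ (equivalent to $u\cosh(u/2)\geq\sinh(u/2)$ for $u\geq 0$, easily checked by Taylor comparison) combined with $a_{ij}\geq 0$ for $i\neq j$ (and $u\equiv 0$ on the diagonal). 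This delivers the claimed inequality. The principal difficulty is the rigorous handling of the Lipschitz viscosity solution $U_\vartheta$: the HJ equation holds only a.e., so the pointwise Rayleigh bound and the HJ equation must be combined at differentiability points, and the integration by parts is valid at the $W^{1,\infty}$ level. Both facts are standard for Lipschitz viscosity solutions of coercive HJ equations; alternatively one may approximate $U_\vartheta$ by sup/inf-convolution sub- and super-solutions, run the argument for smooth approximants, and pass to the limit using the continuity of $C(\vartheta)$ established in Theorem \ref{TH-liu-20240106}.
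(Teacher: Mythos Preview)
Your proof is correct and follows the same architecture as the paper's: derive an exact expression for $\lambda$ by testing \eqref{Liu1} against $\bm{\psi}$ and substituting Lemma~\ref{L2}; obtain the cross-identity $\omega\partial_t(\varphi_i\psi_i)=2\rho d_i\nabla\!\cdot(\Phi_i^2\nabla h_i)+\cdots$ and integrate it against $U_\vartheta$; insert the a.e.\ Hamilton--Jacobi equation together with a Rayleigh bound for $H$; and complete the square in $\sqrt{\rho}\,\nabla h_i+\nabla U_\vartheta$. The paper uses the same variables (its $\alpha_i,\beta_i$ are your $\Phi_i,-\sqrt{\rho}\,h_i$) and the same cross-identity.

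The one substantive difference is the Rayleigh step. The paper bounds $H\sum_i\alpha_i^2\ge \bm{\psi}^{\rm T}\bigl({\bf A}+\mathrm{diag}(d_i)|\nabla U|^2\bigr)\bm{\varphi}$, a \emph{bilinear} Rayleigh inequality in the distinct positive vectors $\bm{\varphi},\bm{\psi}$, and then discards the nonnegative symmetrized log term $\sum_{i,j}a_{ij}\Phi_i\Phi_j\,u\sinh u$. You instead apply the textbook Rayleigh inequality with test vector $\Phi=(\Phi_i)$, getting $H\sum_i\Phi_i^2\ge \bm{\Phi}^{\rm T}\bigl({\bf A}+\mathrm{diag}(d_i)|\nabla U|^2\bigr)\bm{\Phi}$, and then control the residual $R=\sum_{i,j}a_{ij}\Phi_i\Phi_j\bigl[1+u\sinh u-\cosh u\bigr]\ge 0$ via the elementary inequality $1+u\sinh u\ge\cosh u$. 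Your route is the safer one: the bilinear bound $\mu(M)\,\bm{\psi}^{\rm T}\bm{\varphi}\ge \bm{\psi}^{\rm T}M\bm{\varphi}$ is \emph{not} a general consequence of symmetry of $M$ and positivity of $\bm{\varphi},\bm{\psi}$ (take $M=\bigl(\begin{smallmatrix}0&2\\2&0\end{smallmatrix}\bigr)$, $\bm{\varphi}=(1,\varepsilon)$, $\bm{\psi}=(\varepsilon,1)$), so the paper's intermediate inequality needs extra justification, whereas yours is immediate. Note also that both your computation and the paper's actually produce the second sum with a weight $d_i$, i.e.\ $\sum_i d_i\int\Phi_i^2\bigl|\sqrt{\rho}\,\nabla h_i+\nabla U_\vartheta\bigr|^2$; the absence of $d_i$ in the displayed statement appears to be a typo.
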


\begin{proof}%[Proof of Theorem  {\rm \ref{liutheorem-0229}}]
Recall that ${\bm \varphi}=(\varphi_1,\cdots,\varphi_n)>0$ and ${\bm \psi}=(\psi_1,\cdots,\psi_n)>0$ are respectively the principal eigenfunctions of \eqref{Liu1} and its adjoint problem \eqref{Liu2} associated with $\lambda(\omega,\rho)$, which  can be normalized by $\int_0^1\!\!\int_\Omega {\bm \varphi}^{\rm T}{\bm \psi}=1$.  Define
\begin{equation}\label{liu-0703-1}
\alpha_i:=\sqrt{\varphi_i\psi_i}\quad\text{ and }\quad\beta_i:=-\frac{\sqrt{\rho}}{2}\log \left(\frac{\varphi_i}{\psi_i}\right).
\end{equation}
By \eqref{Liu1} and \eqref{Liu2}, we can calculate from \eqref{liu-0703-1}  that
\begin{align}\label{liu-20231231-2}
  d_i \nabla\cdot \left(\alpha_i^2 \nabla\beta_i\right)
    = &\frac{\sqrt{\rho }d_i}{2}\nabla\cdot \left(\varphi_i \nabla\psi_i-\psi_i \nabla\varphi_i \right)\notag\\
    =&\frac{\sqrt{\rho}d_i}{2} \left(\varphi_i \Delta\psi_i-\psi_i \Delta\varphi_i\right)\\
    =&-\frac{\omega}{2\sqrt{\rho }}\partial_t(\alpha_i^2)+\frac{1}{2\sqrt{\rho}}\sum_{j=1}^n a_{ij} (\varphi_j\psi_i-\psi_j\varphi_i).\notag
\end{align}
%For each $\vartheta>0$,
We write $U=U_{\omega/\sqrt{\rho}}$  for  simplicity, which is any Lipschitz viscosity solution of \eqref{Liu01-20210716} associated to $C(\omega/\sqrt{\rho})$, %i.e. %Let $U$ be the any Lipschitz viscosity solution of \eqref{Liu01-20210716} associated to $C(\omega/\sqrt{\rho})$, %and in particular
i.e.
\begin{equation}\label{liu-20231231-3}
   \frac{\omega}{\sqrt{\rho}} \partial_{t}U+%|\nabla U|^2+\ell (x,t)
   {\rm H}(\nabla U,x,t)= -C(\omega/\sqrt{\rho})\quad \text{ a.e. in } \Omega\times\mathbb{R}.
\end{equation}
Multiplying both sides of \eqref{liu-20231231-2} by $U$ yields that
%\begin{equation}\label{liu-20231231-1}
\begin{align*}
   d_i\int_0^1\!\!\!\int_\Omega\alpha_i^2 \nabla\beta_i\cdot \nabla U=
    -\frac{\omega}{2\sqrt{\rho}}\int_0^1\!\!\!\int_\Omega\alpha_i^2\partial_t U-\frac{1}{2\sqrt{\rho}}\sum_{j=1}^n  \int_0^1\!\!\!\int_\Omega a_{ij}(\varphi_j\psi_i-\psi_j\varphi_i)U.
    \end{align*}
%\end{equation}
Summing  both sides of the above equality over index $i$ from $1$ to $n$, we derive %we arrive at
\begin{equation*}%\label{liu-20231231-1}
   \sum_{i=1}^n d_i\int_0^1\!\!\!\int_\Omega\alpha_i^2 \nabla\beta_i\cdot \nabla U+\frac{\omega}{2\sqrt{\rho}}\sum_{i=1}^n\int_0^1\!\!\!\int_\Omega\alpha_i^2\partial_t U=0.
\end{equation*}
Hence, in view of $\sum_{i=1}^n\int_0^1\!\int_\Omega\alpha_i^2=1$,  by \eqref{liu-20231231-3} we deduce that
\begin{equation}\label{liu-20231231-1}
   2\sum_{i=1}^n d_i\int_0^1\!\!\!\int_\Omega\alpha_i^2 \nabla\beta_i\cdot \nabla U=C(\omega/\sqrt{\rho})+\sum_{i=1}^n\int_0^1\!\!\!\int_\Omega\alpha_i^2 {\rm H}(\nabla U,x,t).
\end{equation}
By \eqref{liu-20231231-3} and \eqref{liu-20231231-1}, direct calculations yield
%\begin{equation}\label{liu0412-2}
\begin{align}\label{liu-20231231-4}
    &\rho\sum_{i=1}^n d_i\int_0^1\!\!\!\int_\Omega |\nabla \alpha_i|^2+\sum_{i=1}^n d_i\int_0^1\!\!\!\int_\Omega \alpha_i^2\left|\nabla\beta_i-\nabla U \right|^2 \notag\\
    =&\frac{\rho}{4}\sum_{i=1}^n d_i\int_0^1\!\!\!\int_\Omega \alpha_i^2|\nabla\log(\alpha_i^2)|^2+\sum_{i=1}^n d_i\int_0^1\!\!\!\int_\Omega \alpha_i^2|\nabla\beta_i|^2\notag\\
    &-2\sum_{i=1}^n  d_i\int_0^1\!\!\!\int_\Omega \alpha_i^2\nabla\beta_i\cdot\nabla U+ \sum_{i=1}^n  d_i\int_0^1\!\!\!\int_\Omega \alpha_i^2|\nabla U|^2 \notag\\
    =&\rho\sum_{i=1}^n d_i\int_0^1\!\!\!\int_\Omega \nabla\varphi_i\cdot\nabla\psi_i+2\sum_{i=1}^n d_i\int_0^1\!\!\!\int_\Omega \alpha_i^2|\nabla\beta_i|^2%-2\sum_{i=1}^n\int_0^1\!\!\!\int_\Omega \alpha_i^2\nabla\beta_i\cdot\nabla U
    \\
&-2\sum_{i=1}^n  d_i\int_0^1\!\!\!\int_\Omega \alpha_i^2\nabla\beta_i\cdot\nabla U+ \sum_{i=1}^n  d_i\int_0^1\!\!\!\int_\Omega \alpha_i^2|\nabla U|^2 \notag\\
=&2\sum_{i=1}^n d_i\int_0^1\!\!\!\int_\Omega \alpha_i^2|\nabla\beta_i|^2+\rho\sum_{i=1}^n d_i\int_0^1\!\!\!\int_\Omega \nabla\varphi_i\cdot\nabla\psi_i%-2\sum_{i=1}^n\int_0^1\!\!\!\int_\Omega \alpha_i^2\nabla\beta_i\cdot\nabla U
\notag\\
&%-\frac{\omega}{\sqrt{d}} \sum_{i=1}^n\int_0^1\!\!\!\int_\Omega \alpha_i^2 \partial_{t}U
+ \sum_{i=1}^n  d_i\int_0^1\!\!\!\int_\Omega \alpha_i^2|\nabla U|^2-\sum_{i=1}^n\int_0^1\!\!\!\int_\Omega\alpha_i^2 {\rm H}(\nabla U,x,t) -C(\omega/\sqrt{\rho}). \notag
%\\
    % =&2\sum_{i=1}^n d_i\int_0^1\!\!\!\int_\Omega \alpha_i^2|\nabla\beta_i|^2+\rho\sum_{i=1}^n d_i\int_0^1\!\!\!\int_\Omega \nabla\varphi_i\cdot\nabla\psi_i-C(\omega/\sqrt{d})-\sum_{i=1}^n\int_0^1\!\!\!\int_\Omega \alpha_i^2\ell (x,t) \notag.
\end{align}
%\end{equation}

Observe from Lemma \ref{L2} that
\begin{equation}\label{liu-20231231-5}
    \begin{split}
         2\sum_{i=1}^n d_i\int_0^1\!\!\!\int_\Omega \alpha_i^2|\nabla\beta_i|^2=&\frac{\rho}{2}\sum_{i=1}^n d_i\int_0^1\!\!\!\int_\Omega \alpha_i^2|\nabla\log(\varphi_i/\psi_i)|^2\\
    =& \omega\int_0^1\!\!\!\int_\Omega  {\bm\psi}^{\rm T} \partial_t {\bm \varphi}
    -\frac{1}{2}\sum_{i,j=1}^n\int_0^1\!\!\!\int_\Omega a_{ij} \varphi_j\psi_i\log\left(\frac{\psi_i\varphi_j}{\varphi_i\psi_j}\right).
    \end{split}
\end{equation}
Multiply both sides of \eqref{Liu1} by ${\bm\psi}^{\mathrm{T}}$ and integrate over $(0,1)\times \Omega$, then
\begin{align*}
\omega\int_0^1\!\!\!\int_\Omega  {\bm \psi}^{\rm T} \partial_t {\bm \varphi}
    +\rho\sum_{i=1}^n d_i\int_0^1\!\!\!\int_\Omega \nabla\varphi_i\cdot\nabla\psi_i- \sum_{i,j=1}^n\int_0^1\!\!\!\int_\Omega a_{ij} \varphi_j\psi_i=\lambda(\omega,\rho).
   \end{align*}
This together with %\eqref{liu-20231231-4} and
\eqref{liu-20231231-5} yields
\begin{align*}
&2\sum_{i=1}^n d_i\int_0^1\!\!\!\int_\Omega \alpha_i^2|\nabla\beta_i|^2+\rho\sum_{i=1}^n d_i\int_0^1\!\!\!\int_\Omega \nabla\varphi_i\cdot\nabla\psi_i\\
=&\lambda(\omega,\rho)+\sum_{i,j=1}^n\int_0^1\!\!\!\int_\Omega a_{ij}\varphi_j\psi_i-\frac{1}{2}\sum_{i,j=1}^n\int_0^1\!\!\!\int_\Omega a_{ij} \varphi_j\psi_i\log\left(\frac{\psi_i\varphi_j}{\varphi_i\psi_j}\right).
% \\
% \leq &\lambda(\omega,\rho)+\sum_{i,j=1}^n\int_0^1\!\!\!\int_\Omega a_{ij}\varphi_j\psi_i.
\end{align*}
Substituting the above equality into \eqref{liu-20231231-4} gives
\begin{equation}\label{liu-20231231-6}
    \begin{split}
        &\rho\sum_{i=1}^n\int_0^1\!\!\!\int_\Omega d_i|\nabla \alpha_i|^2+\sum_{i=1}^n\int_0^1\!\!\!\int_\Omega \alpha_i^2\left|\nabla\beta_i-\nabla U \right|^2 \\ =&\lambda(\omega,\rho)-C(\omega/\sqrt{\rho}) -\frac{1}{2}\sum_{i,j=1}^n\int_0^1\!\!\!\int_\Omega a_{ij} \varphi_j\psi_i\log\left(\frac{\psi_i\varphi_j}{\varphi_i\psi_j}\right)%-\sum_{i=1}^n\int_0^1\!\!\!\int_\Omega \alpha_i^2\ell (x,t)
        \\
        &+ \sum_{i=1}^n  d_i \int_0^1\!\!\!\int_\Omega \alpha_i^2|\nabla U|^2-\sum_{i=1}^n\int_0^1\!\!\!\int_\Omega\alpha_i^2 {\rm H}(\nabla U,x,t) +\sum_{i,j=1}^n\int_0^1\!\!\!\int_\Omega a_{ij}\varphi_j\psi_i.
    \end{split}
\end{equation}
Recall that ${\rm H}(\nabla U,x,t)$ defines the the maximal eigenvalue of matrix $${\bf A}+{\rm{diag}}\{d_1,\cdots, d_n\}|\nabla U|^2.$$
%Write ${\bm \alpha}:=(\alpha_1,\cdots,\alpha_n)$.
By the symmetry of matrix ${\bf A}$,
%Since $\ell (x,t)$ is the minimal eigenvalue of matrix $A$,
we observe %it follows
that
\begin{align*}
    {\rm H}(\nabla U,x,t)\sum_{i=1}^n \alpha_i^2 &= {\rm H}(\nabla U,x,t) {\bm\psi}^{\rm T}{\bm\varphi}\\
    &\geq {\bm\psi}^{\rm T} ({\bf A}+{\rm{diag}}\{d_1,\cdots,d_n\}|\nabla U|^2) {\bm\varphi} \\
    % &= \sum_{i,j=1}^n a_{ij}\sqrt{\varphi_i\psi_j}\sqrt{\varphi_j\psi_i} + |\nabla U|^2\sum_{i=1}^n d_i \alpha_i^2 \\
    &= \sum_{i,j=1}^n a_{ij} \varphi_j\psi_i+ |\nabla U|^2\sum_{i=1}^n d_i \alpha_i^2.
\end{align*}
%$$H(\nabla U,x,t)\sum_{i=1}^n \alpha_i^2\leq \sum_{i,j=1}^n a_{ij}\alpha_i \alpha_j=\sum_{i,j=1}^n a_{ij}\sqrt{\varphi_i\psi_j}\sqrt{\varphi_j\psi_i}\leq \frac{1}{2}\sum_{i,j=1}^n a_{ij}(\varphi_j\psi_i+\varphi_j\psi_i).$$
Then we can derive from \eqref{liu-20231231-6} that
\begin{align*}%\label{liu0412-3}
%\begin{split}
   \lambda(\omega,\rho)-C(\omega/\sqrt{\rho})
    \geq \rho\sum_{i=1}^n\int_0^1\!\!\!\int_\Omega d_i|\nabla \alpha_i|^2+\sum_{i=1}^n\int_0^1\!\!\!\int_\Omega \alpha_i^2\left|\nabla\beta_i-\nabla U \right|^2.
  %  \end{split}
\end{align*}
This proves Theorem \ref{liutheorem-0229}.
\end{proof}

%The main result can be stated as follows,
Based on the equality in Lemma \ref{L2}, we can also establish the following monotonicity result,
which implies
%Next, we  present the proof of
Theorem {\rm\ref{TH1-1}}.
\begin{theorem}\label{TH1}
Let $\lambda(\omega,\rho)$ denote the principal eigenvalue of \eqref{Liu1}.
Suppose that $\rho=\rho(\omega)\in C^1((0,\infty))$,
$\rho'(\omega)\geq 0$, and $\left[\rho(\omega)/\omega^2\right]'\leq 0$ %$\left[\frac{d(\omega)}{\omega^2}\right]'\leq 0$
in \eqref{Liu1}. Then $\lambda(\omega,\rho(\omega))$ is non-decreasing in $\omega$.   %Then
% $\Lambda'(\omega) \geq 0$ for  all $\omega>0$. %, and either $\Lambda'(\omega)> 0$ for all $\omega>0$, %$\omega>0$,  or $ \Lambda'(\omega)\equiv 0$.

 Furthermore, %he followings hold.
%\begin{itemize}
%  \item [\rm{(i)}] If $\rho'(\omega)>0$, then $\Lambda'(\omega)= 0$ if and only if
% ${\bf A}={\bf A}(t)$  is independent of $x\in\Omega$;
   % \item [\rm{(ii)}]
    if $\rho'(\omega)= 0$, then $(\lambda(\omega,\rho(\omega)))'= 0$ if and only if
   \begin{equation}\label{n-s-condition}
       {\bf A}{\bm \phi}=\widehat{\bf A}(x){\bm \phi}+g(t){\bm \phi},\quad \forall (x,t)\in\Omega\times\mathbb{R}
   \end{equation}
  for  some periodic function $g\in C(\mathbb{R})$, where $\widehat{\bf A}$ is the temporally averaged matrix defined in Proposition {\rm\ref{TH-liu-20240227}}
  and
  ${\bm \phi}={\bm \phi}(x)$ is the principal eigenfunction of elliptic problem \eqref{liu-20240227-1}.
%\end{itemize}

In particular, for each %fixed
$\rho>0$,
 either $\partial\lambda(\omega, \rho)/\partial \omega> 0$ for all $\omega>0$, or $\partial\lambda(\omega, \rho)/\partial \omega\equiv 0$.
%Moreover, $\partial\lambda(\omega, d)/\partial \omega\equiv 0$ %for all $\omega=0$
%if and only if
%$\gamma-\lambda^* \beta-\hat{\gamma}(x)+\lambda^*\hat{\beta}(x)$ is independent of $x$.
\end{theorem}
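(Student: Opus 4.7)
The plan is to differentiate the eigenvalue equation in $\omega$, pair the result against the adjoint eigenfunction, and then use Lemma~\ref{L2} together with a pointwise calculus identity to rewrite $\bigl(\lambda(\omega,\rho(\omega))\bigr)'$ as a weighted sum of three manifestly nonnegative quantities whose coefficients are controlled precisely by the hypotheses $\rho'(\omega)\ge 0$ and $[\rho/\omega^2]'\le 0$. First I would differentiate $\omega\partial_t\bm\varphi-\rho(\omega){\bf D}\Delta\bm\varphi-{\bf A}\bm\varphi=\lambda(\omega)\bm\varphi$ in $\omega$, multiply by $\bm\psi^{\rm T}$, integrate over $\Omega\times(0,1)$, and use the adjoint equation~\eqref{Liu2} (relying on the symmetry of ${\bf A}$) to cancel all terms containing $\partial_\omega\bm\varphi$. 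Under the normalization $\int_0^1\!\!\int_\Omega\bm\psi^{\rm T}\bm\varphi=1$ this produces
\begin{equation*}
\bigl(\lambda(\omega,\rho(\omega))\bigr)'=\int_0^1\!\!\!\int_\Omega\bm\psi^{\rm T}\partial_t\bm\varphi+\rho'(\omega)\sum_{i=1}^n d_i\int_0^1\!\!\!\int_\Omega\nabla\varphi_i\cdot\nabla\psi_i.
\end{equation*}

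Next I would substitute Lemma~\ref{L2} for the first integral and apply the pointwise identity $\nabla\varphi_i\cdot\nabla\psi_i=\bigl|\nabla\sqrt{\varphi_i\psi_i}\bigr|^2-\tfrac14\varphi_i\psi_i\bigl|\nabla\log(\psi_i/\varphi_i)\bigr|^2$ to arrive at
\begin{align*}
\bigl(\lambda(\omega,\rho(\omega))\bigr)'=&\left(\frac{\rho(\omega)}{2\omega}-\frac{\rho'(\omega)}{4}\right)\sum_{i=1}^n d_i\int_0^1\!\!\!\int_\Omega\varphi_i\psi_i\left|\nabla\log\frac{\psi_i}{\varphi_i}\right|^2\\
&+\frac{1}{4\omega}\sum_{i,j=1}^n\int_0^1\!\!\!\int_\Omega a_{ij}(\varphi_j\psi_i-\psi_j\varphi_i)\log\frac{\psi_i\varphi_j}{\varphi_i\psi_j}\\
&+\rho'(\omega)\sum_{i=1}^n d_i\int_0^1\!\!\!\int_\Omega\bigl|\nabla\sqrt{\varphi_i\psi_i}\bigr|^2.
\end{align*}
The first coefficient is nonnegative since $[\rho/\omega^2]'\le 0$ rearranges to $\omega\rho'(\omega)\le 2\rho(\omega)$; the second sum is nonnegative since $a_{ij}\ge 0$ for $i\neq j$ and $(u-v)\log(u/v)\ge 0$ for all $u,v>0$; and the third is nonnegative by $\rho'(\omega)\ge 0$. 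This proves the monotonicity.

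For the equality characterization, I would specialize to $\rho'\equiv 0$ and examine what $(\lambda(\omega,\rho(\omega)))'=0$ forces. Vanishing of the first term gives $\nabla\log(\psi_i/\varphi_i)\equiv 0$, so $\psi_i(x,t)=C_i(t)\varphi_i(x,t)$ for smooth positive $C_i$. Vanishing of the second term (each summand being nonnegative) imposes the pointwise constraint $(C_i(t)-C_j(t))a_{ij}(x,t)\equiv 0$. Substituting $\psi_i=C_i\varphi_i$ into~\eqref{Liu2} and subtracting~\eqref{Liu1} yields the pointwise identity $\omega\partial_t(C_i\varphi_i^2)=\sum_j(C_i-C_j)a_{ij}\varphi_i\varphi_j$, whose right-hand side now vanishes; hence $C_i(t)\varphi_i(x,t)^2$ is independent of $t$, so $\varphi_i(x,t)=\phi_i(x)/\sqrt{C_i(t)}$ for some positive $\phi_i\in C^2(\overline\Omega)$. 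Plugging this separated form back into~\eqref{Liu1}, multiplying by $\sqrt{C_i}$, and using the pointwise identity $a_{ij}(C_i/C_j)^{1/2}=a_{ij}$ (from the previous step), I obtain
\begin{equation*}
-\rho d_i\Delta\phi_i(x)-\sum_{j=1}^n a_{ij}(x,t)\phi_j(x)+g_i(t)\phi_i(x)=\lambda\phi_i(x),\qquad g_i(t):=-\frac{\omega(\log C_i)'(t)}{2},
\end{equation*}
whose time-average (using $\int_0^1 g_i\,dt=0$ by periodicity of $C_i$) shows $\bm\phi$ satisfies~\eqref{liu-20240227-1} with $\lambda=\overline\lambda(\rho)$, while the fluctuating part gives $\sum_j[a_{ij}(x,t)-\widehat{\bf A}_{ij}(x)]\phi_j(x)=g_i(t)\phi_i(x)$. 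The full-coupling hypothesis, combined with $(C_i-C_j)a_{ij}\equiv 0$, then forces the $g_i$ to collapse into a single scalar $g(t)$, yielding~\eqref{n-s-condition}. Conversely, given~\eqref{n-s-condition} with $\bm\phi$ the principal eigenfunction of~\eqref{liu-20240227-1}, the explicit formula $\bm\varphi(x,t)=\bm\phi(x)\exp\!\bigl(\tfrac{1}{\omega}\int_0^t(g(s)-\bar g)\,ds\bigr)$ (with $\bar g=\int_0^1 g\,ds$) gives a positive time-periodic eigenfunction of~\eqref{Liu1} with eigenvalue $\overline\lambda(\rho)-\bar g$, independent of $\omega$, so $\partial_\omega\lambda\equiv 0$.

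The dichotomy for fixed $\rho$ is then immediate: condition~\eqref{n-s-condition} does not involve $\omega$, so the set of $\omega$ where $\partial_\omega\lambda(\omega,\rho)$ vanishes is either empty or all of $(0,\infty)$. I expect the main obstacle to be the final step of the forward equality analysis: upgrading the pointwise constraint $(C_i-C_j)a_{ij}\equiv 0$ to the uniform equality $g_1=\cdots=g_n=:g$ required by~\eqref{n-s-condition}. While the constraint yields $C_i(t)\equiv C_j(t)$ on the open time set where $a_{ij}(\cdot,t)\not\equiv 0$, extending the equality of logarithmic derivatives to all of $[0,1]$ requires a careful propagation argument through the full-coupling graph of ${\bf A}$, combined with the component-wise structure of the derived relation $\sum_j\tilde a_{ij}\phi_j=g_i\phi_i$.
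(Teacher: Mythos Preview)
Your argument for the monotonicity (Step~1) is essentially identical to the paper's: differentiate in $\omega$, pair with the adjoint, invoke Lemma~\ref{L2}, and split using the identity $\nabla\varphi_i\cdot\nabla\psi_i=\bigl|\nabla\sqrt{\varphi_i\psi_i}\bigr|^2-\tfrac14\varphi_i\psi_i\bigl|\nabla\log(\psi_i/\varphi_i)\bigr|^2$ (the paper writes the equivalent form $\tfrac14\varphi_i\psi_i\bigl|\nabla\log(\varphi_i\psi_i)\bigr|^2$ instead of $\bigl|\nabla\sqrt{\varphi_i\psi_i}\bigr|^2$). The three nonnegative pieces and the role of the two hypotheses are exactly the same.

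For the equality characterization your route diverges slightly from the paper's, and in an instructive way. The paper, after observing that both nonnegative terms vanish, simply \emph{asserts} $\varphi_i\psi_j=\psi_i\varphi_j$ for all $i,j$, whence $c_i=c_j=c(t)$, and then proceeds to the separated form $\psi_i=f_i(x)c(t)^{-\alpha}$ by substituting $\varphi=c\psi$ into \eqref{Liu1} and comparing with \eqref{Liu2}. You instead keep the honest pointwise constraint $(C_i-C_j)a_{ij}\equiv 0$ and derive the separation \emph{componentwise} via the neat identity $\omega\,\partial_t(C_i\varphi_i^2)=\sum_j(C_i-C_j)a_{ij}\varphi_i\varphi_j=0$, which avoids assuming all $C_i$ coincide. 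This is cleaner and gets you just as far: the individual relations $\sum_j(a_{ij}-\widehat a_{ij})\phi_j=g_i\phi_i$ with $g_i=-\tfrac{\omega}{2}(\log C_i)'$.

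The obstacle you flag---collapsing the $g_i$ to a single $g$---is real, and it is precisely the point the paper skips. The vanishing of the second term only gives $\varphi_j\psi_i=\psi_j\varphi_i$ at points where $a_{ij}(x,t)>0$; the paper's blanket assertion $\varphi_i\psi_j=\psi_i\varphi_j$ (hence $c_i\equiv c_j$) is not justified there. Full coupling guarantees that the \emph{union over $t$} of the coupling graphs is connected, but for a fixed $t$ the graph $\{(i,j):a_{ij}(\cdot,t)\not\equiv 0\}$ need not be, so one cannot conclude $g_i(t)=g_j(t)$ for every $t$ from $(C_i-C_j)a_{ij}\equiv 0$ alone. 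Your awareness that an additional propagation argument (or a structural use of the relation $(A-\widehat A)\phi=G\phi$ with $G=\mathrm{diag}(g_i)$ symmetric) is needed is accurate; neither your proposal nor the paper supplies it. If you can close this, you will have a strictly more complete proof than the published one.
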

% \section{Proof of Theorem \ref{TH1}}\label{sub3.2}
\begin{proof}%[Proof of  Theorem {\rm\ref{TH1}}]
%To prove Theorem \ref{TH1}, we substitute
 %$\varphi=\varphi$ in (\ref{Liu01}),

In the proof, we denote ${\bm \varphi}=(\varphi_1,\cdots,\varphi_n)>0$ and ${\bm \psi}=(\psi_1,\cdots,\psi_n)>0$ by   the principal eigenfunctions of \eqref{Liu1} and \eqref{Liu2},  respectively. The proof is completed by the following two steps.

\smallskip

 {\it Step 1}. We first show that the principal eigenvalue $\lambda(\omega,\rho(\omega))$
  is non-decreasing  in $\omega\geq0$. Let $\rho=\rho(\omega)$ in \eqref{Liu1} and define $\Lambda(\omega):=\lambda(\omega,\rho(\omega))$. Differentiate both sides of \eqref{Liu1} with respect to $\omega$ and denote ${\bm \varphi}'=\frac{\partial {\bm \varphi}}{\partial\omega}$ for brevity. Then %we have
 \begin{equation*}
 \partial_t{\bm \varphi}+\omega \partial_t{\bm \varphi}'-\rho'(\omega){\bf D}\Delta{\bm \varphi}-\rho(\omega){\bf D}\Delta{\bm \varphi}'-{\bf A}(x,t) {\bm \varphi}'=\Lambda(\omega) {\bm \varphi}'+\Lambda'(\omega) {\bm \varphi} \quad \text{in}~~\Omega\times\mathbb{R}.
 \end{equation*}
Recall  that ${\bm \psi}$ satisfies \eqref{Liu2}. Multiply the above equation by ${\bm \psi}^{\mathrm{T}}$ and integrate the resulting equation over $\Omega\times(0,1)$, then
\begin{equation*}%\label{Liukey1}
   \Lambda'(\omega)\int_0^1\!\!\!\int_\Omega{\bm \psi}^{\mathrm{T}}{\bm \varphi} =\int_0^1\!\!\!\int_\Omega{\bm \psi}^{\mathrm{T}}\partial_t{\bm \varphi} +\rho'(\omega)\sum_{i=1}^n d_i \int_0^1\!\!\!\int_\Omega \nabla\varphi_i\cdot \nabla \psi_i.
\end{equation*}
% Observe from the definitions  functional $\mathcal{F}$ that
% \begin{equation*}
% \begin{split}
%   \frac{\partial\lambda}{\partial\omega}\int\psi^{\mathrm{T}}\varphi=\sum_{i=1}^{n}\int\psi_{i}\partial_t\varphi_{i}
%   % \\
%   %  =&\frac{1}{2\omega}\sum_{i=1}^{n}\left[\int\psi_{i} L^i \varphi_{i}  -\int\varphi_{i}  L^i \psi_{i}  \right]\\
%     =\frac{1}{2\omega}\Big[\mathcal{F}(\varphi)-\mathcal{F}(\psi)\Big].
% \end{split}
% \end{equation*}
% In view of $\psi\in\mathbb{S}$,
By  Lemma \ref{L2} and the symmetry of matrix ${\bf A}$, we derive
\begin{align}\label{liu-20231222-1}
%\begin{split}
    &\Lambda'(\omega)\int_0^1\!\!\!\int_\Omega{\bm \psi}^{\mathrm{T}}{\bm \varphi}\notag\\
    =&\frac{\rho(\omega)}{2\omega} \sum_{i=1}^n d_i \int_0^1\!\!\!\int_\Omega\varphi_i\psi_i\left|\nabla\log \left(\frac{\psi_i}{\varphi_i}\right)\right|^2\notag+\rho'(\omega)\sum_{i=1}^n d_i \int_0^1\!\!\!\int_\Omega \nabla\varphi_i\cdot \nabla \psi_i\notag\\
    &+\frac{1}{4\omega}\sum_{i,j=1}^n\int_0^1\!\!\!\int_\Omega a_{ij}( \varphi_j\psi_i-\psi_j \varphi_i)\log\left(\frac{\psi_i\varphi_j}{\varphi_i\psi_j}\right)\notag\\
    \geq&\left[\frac{\rho(\omega)}{2\omega}-\frac{\rho'(\omega)}{4}\right] \sum_{i=1}^n d_i \int_0^1\!\!\!\int_\Omega\varphi_i\psi_i\left|\nabla\log \left(\frac{\psi_i}{\varphi_i}\right)\right|^2\\
     &+\frac{\rho'(\omega)}{4} \sum_{i=1}^n d_i \int_0^1\!\!\!\int_\Omega\varphi_i\psi_i\left|\nabla\log \left(\frac{\psi_i}{\varphi_i}\right)\right|^2+\rho'(\omega)\sum_{i=1}^n d_i \int_0^1\!\!\!\int_\Omega \nabla\varphi_i\cdot \nabla \psi_i
     \notag\\
     =&-\frac{\omega^2}{4}\left[\frac{\rho(\omega)}{\omega^2}\right]'\sum_{i=1}^n d_i \int_0^1\!\!\!\int_\Omega\varphi_i\psi_i\left|\nabla\log \left(\frac{\psi_i}{\varphi_i}\right)\right|^2\notag\\
     &+\frac{\rho'(\omega)}{4} \sum_{i=1}^n d_i \int_0^1\!\!\!\int_\Omega\varphi_i\psi_i\left|\nabla\log \left(\varphi_i\psi_i\right)\right|^2.
     \notag
\end{align}
% By symmetry
% \begin{equation*}
% \begin{split}
%     2\omega\frac{\partial\lambda}{\partial\omega}(\omega)\geq -\int\Big(\varphi^{\rm T}A\psi\Big)\log\left(\frac{\psi^{\rm T}A\varphi}{\varphi^{\rm T}A\psi}\right)   +\sum_{i=1}^{n} d_i\int\varphi_{i} \psi_{i}\left|\nabla\log\left(\frac{\psi_{i}}{\varphi_{i}}\right)\right|^{2}  ?\\
% %    &=\sum_{i=1}^{n} d_i\int\varphi_{i} \psi_{i}\left|\nabla\log\left(\frac{\psi_{i}}{\varphi_{i}}\right)\right|^{2}  ,
% \end{split}
% \end{equation*}
% %where the last equality follows from $A=A^\mathrm{T}$.
Due to $\rho'(\omega)\geq 0$ and $\left[\rho(\omega)/\omega^2\right]'\leq 0$, \eqref{liu-20231222-1} implies $\Lambda'(\omega) \geq 0$ for all $\omega>0$. This proves the monotonicity of $\Lambda(\omega)$ and Step 1 is complete.

{\it Step 2}. We assume $\rho'(\omega)=0$ and prove $\Lambda'(\omega)= 0$ if and only if ${\bf A}{\bm \phi}=\widehat{\bf A}(x){\bm \phi}+g(t){\bm \phi}$ for all $(x,t)\in\Omega\times\mathbb{R}$ for some periodic function $g$, where  ${\bm \phi}={\bm \phi}(x)>0$ is the principal eigenfunction of \eqref{liu-20240227-1} with $\rho=\rho(\omega)$ correspoding to $\overline{\lambda}(\rho(\omega))$.
%parts {\rm (i)} and {\rm (ii)} in Theorem {\rm\ref{TH1}}.
First, we assume  ${\bf A}{\bm\phi}=\hat{{\bf A}}(x){\bm\phi}+g(t){\bm\phi}$ with some periodic function $g$. Set $\overline{\bm \varphi}(x,t)=e^{\left(\frac{1}{\omega}\int_{0}^{t}g(s)\mathrm{d}s\right)}{\bm \phi}$. Then  $\overline{\bm \varphi}$ is $1$-periodic and solves
\begin{equation*}
 \begin{cases}
\begin{array}{ll}
\smallskip
\omega\partial_t \overline{\bm \varphi}-\rho(\omega) {\bf D}\Delta \overline{\bm \varphi}-{\bf A}(x,t)\overline{\bm \varphi}=\overline{\lambda}(\rho(\omega))\overline{\bm \varphi} &\text{ in }\,\,\,\Omega\times\mathbb{R},\\
\smallskip
  \nabla \overline{\bm \varphi}\cdot\nu =0 &\text{ on }\,\,\,\Omega\times\mathbb{R},\\
  \overline{\bm \varphi}(x,t)=\overline{\bm \varphi}(x,t+1) &\text{ in }\,\,\,\Omega\times\mathbb{R}.
  \end{array}
 \end{cases}
 \end{equation*}
%Observe that $\hat{A}$ and $\hat{V}$ are independent of $t$.
By the uniqueness of principal eigenvalue, %it is clear that
we have $\Lambda(\omega)\equiv \overline{\lambda}(\rho(\omega))$ for all $\omega>0$, and hence $\Lambda'(\omega)=0$ due to $\rho'(\omega)=0$. %This proves part (i). %in Theorem \ref{TH1}.

Conversely, we assume $\Lambda'(\omega_*)=0$ for some $\omega_*>0$ and show \eqref{n-s-condition} holds. %prove that ${\bf A}{\bm \phi}=\hat{{\bf A}}(x){\bm \phi}+g(t){\bm \phi}$ for some $1$-periodic function $g$.
Due to $\rho'(\omega_*)=0$,  we have $\left[\rho(\omega)/\omega^2\right]'|_{\omega=\omega_*}< 0$.
It follows from \eqref{liu-20231222-1} %and the irreducibility of matrix ${\bf A}$
that  $\varphi_{i}\psi_{j}=\psi_{i}\varphi_{j}$ and $\varphi_i=c_i(t)\psi_i$ for all $1\leq i,j\leq n$, where $c_i(t)>0$ is some $1$-periodic function. In particular, substituting $\varphi_i=c_i(t)\psi_i$ into $\psi_{j}\varphi_{i}=\psi_{i}\varphi_{j}$ gives $c_i(t)=c_j(t)=c(t)$. We then substitute $\varphi_i=c(t)\psi_i$ in \eqref{Liu1} to derive that
$$\omega_*\partial_t{\bm \psi}+\omega_*(\log c)'{\bm \psi}-\rho(\omega_*) {\bf D}\Delta{\bm\psi}-{\bf A}{\bm\psi}=\Lambda(\omega_*){\bm\psi}\quad \text{ in } \,\,\Omega\times\mathbb{R}.$$
By \eqref{Liu2} and the symmetry of ${\bf A}$, we have
$2(\log c)'{\bm\psi}+\partial_t{\bm\psi}=0$, and thus
%\begin{equation*}
%\begin{array}{l}
   $\frac{\partial\log \psi_i}{\partial t}=-2(\log c)'$
%\end{array}
%\end{equation*}
 for all  $1\leq i\leq n$. Hence, $\psi_i=f_i(x)/c^2(t)$ with some positive function $f_i(x)$. %To Set $X_{\omega_*}(x)=(x^1_{\omega_*},x^2_{\omega_*})^\mathrm{T}$ and substitute $\psi_{\omega_*}=\tau_{\omega_*} X_{\omega_*}$ into (\ref{Liu2}) to deduce
 Then by  \eqref{Liu2},
\begin{equation}\label{liu-20240229-1}
    -\rho {\bf D}\Delta {\bm f}-{\bf A}{\bm f}=\left(\Lambda(\omega_*)-2\omega_*(\log c)'\right){\bm f} \quad \text{ in } \,\,\Omega\times\mathbb{R}.
\end{equation}
Integrating the above equation with respect to $t$ on $(0,1)$ gives
\begin{equation*}
 \left\{
\begin{array}{ll}
\medskip
-\rho {\bf D}\Delta {\bm f}-\hat{{\bf A}}(x){\bm f}=\lambda(\omega_*,\rho){\bm f} &\mathrm{in}~\,\,\,\Omega,\\
  \nabla  {\bm f}\cdot\nu =0,~~&\mathrm{on}\,\,\,~\partial\Omega.\\
  \end{array}
 \right.
 \end{equation*}
This implies that ${\bm f}>0$ is a eigenfunction of problem \eqref{liu-20240227-1}, whence $\Lambda(\omega_*)=\overline{\lambda}(\rho(\omega_*))$ and ${\bm f}= \tilde{c}{\bm \phi}$ for some constant $\tilde{c}>0$. Substituting this into \eqref{liu-20240229-1} gives
$${\bf A}{\bm \phi}=-\rho {\bf D}\Delta {\bm \phi}+\left(2\omega_*(\log c)'-\overline{\lambda}(\rho(\omega_*))\right){\bm \phi} \quad \text{ in } \,\,\Omega\times\mathbb{R}.$$
Hence, \eqref{n-s-condition} holds %that ${\bf A}{\bm\phi}=\widehat{\bf A}(x){\bm \phi}+g(t){\bm\phi}$
with % periodic function
$g(t)=2\omega_*(\log c)'-\overline{\lambda}(\rho(\omega_*))$. The proof is  now complete.
\end{proof}

We are in a position to prove Theorems \ref{liutheorem-0229-1} and \ref{TH1-1}.

\begin{proof}[Proof of Theorem {\rm\ref{liutheorem-0229-1}}]
    Let $C(\omega/\sqrt{\rho})$ be the critical value of %any Lipschitz  viscosity solution of
\eqref{Liu01-20210716} with $\vartheta=\omega/\sqrt{\rho}$.
A direct application of Theorem \ref{liutheorem-0229} yields   $\lambda(\omega,\rho)\geq C(\omega/\sqrt{\rho})$ for all $\omega,\rho>0$. It remains to show that $\vartheta\mapsto C(\vartheta)$ is non-decreasing. By Theorem \ref{TH1} we observe that $\vartheta\mapsto \lambda(\vartheta\sqrt{\rho},\rho)$ is non-decreasing. Due to $\lambda(\vartheta\sqrt{\rho},\rho)\to C(\vartheta)$ as $\rho \to 0$ as shown in Proposition \ref{prop-20240521}, this implies directly the monotonicity of $\vartheta\mapsto C(\vartheta)$. The proof is complete.
\end{proof}

\begin{proof}[Proof of Theorem {\rm\ref{TH1-1}}]
Theorem \ref{TH1-1} is a direct consequence of Theorem \ref{TH1}.
\end{proof}

Finally, we conclude this section by proving the limiting result \eqref{liu-20240531-3}, which completes the proof of Theorem \ref{TH-liu-20240106}.

\begin{proof}[Proof of  {\rm \eqref{liu-20240531-3}}]
 For each $\vartheta>0$, we apply Theorem \ref{TH1} with $\rho(\omega)=\omega^2/\vartheta^2$  to obtain that $\lambda(\omega,\rho(\omega))=\lambda(\vartheta\sqrt{\rho},\rho)$ is non-decreasing in $\rho$ for any fix $\vartheta>0$. Hence, for any $\rho_*>0$, %we have
 \begin{equation}\label{liu-36}
\lambda(\vartheta\sqrt{\rho}, \rho)\leq \lambda(\vartheta\sqrt{\rho_*}, \rho_*) \quad \text{ for all } \rho\leq \rho_* \text{ and }  \vartheta>0.
 \end{equation}
Proposition \ref{TH-liu-20240227}{\rm(ii)} implies $\lambda(\vartheta\sqrt{\rho_*}, \rho_*)\to \underline{\lambda}(\rho_*)$ as $\vartheta\to 0$, where
$\underline{\lambda}(\rho_*)$ is defined by Proposition \ref{TH-liu-20240227}.
Hence, by \eqref{liu-36} we derive that
 \begin{equation}\label{liu-20240531-4}
  \limsup_{  (\vartheta,\rho)\to (0,0)}\lambda(\vartheta\sqrt{\rho}, \rho)\leq \lim_{  \rho_*\to 0}\underline{\lambda}(\rho_*)=\underline{C},
\end{equation}
where we applied \cite[Theorem 1]{D2009} to problem \eqref{liu-20240227-2} to obtain the limit of $\underline{\lambda}(\rho_*)$ as $\rho_*\to 0$.

It follows from Theorem \ref{liutheorem-0229} that $\lambda(\vartheta\sqrt{\rho}, \rho)\geq  C(\vartheta)$ for all $\vartheta,\rho>0$. Hence,
by Proposition \ref{liu-20240326} we deduce that
$$ \limsup_{ (\vartheta,\rho)\to (0,0)}\lambda(\vartheta\sqrt{\rho}, \rho)\geq  \lim_{  \vartheta\to 0}C(\vartheta)=\underline{C},$$
which together with \eqref{liu-20240531-4} proves \eqref{liu-20240531-3}. The proof of Theorem \ref{TH-liu-20240106} is now complete.
\end{proof}

\medskip
\section{\bf Level sets of the principle eigenvalue}\label{Sect.5}
In this section, we will classify
%topological
structure of the level sets of  principal eigenvalue %$\lambda(\omega, \rho)$
for problem \eqref{Liu1}. %and prove Theorem \ref{liu-levelset}.
To this end, we first introduce some notations. % as illustrated in Fig. \ref{liufig1}.
Recall that ${\bf A}(x,t)=(a_{ij}(x,t))_{n\times n}$ with $a_{ij}$ being some continuous time-periodic function with unit period. We  define
\begin{equation}\label{def_underlineC2}
    \begin{split}
        C_*^+:=-\max_{x\in \overline{\Omega}} \mu (\widehat{\bf A}(x)), \quad
        \underline{C}^+:=-\int_0^1 \mu (\overline{\bf A}(t) ){\rm d}t,\quad
        \overline{C}:=-\mu(\overline{\widehat{\bf A}}),
    \end{split}%\quad\text{and}\quad \overline{C}:=-\int_0^1\!\!\int_\Omega \mu(x,t){\rm d}x{\rm d}t.
\end{equation}
where $\widehat{\bf A}$, $\overline{\bf A}$ and $\overline{\widehat{\bf A}}$ denote  the temporally or/and spatially averaged matrices %of ${\bf A}$
with entries
$$(\widehat{\bf A})_{ij}(x)=\int_0^1 a_{ij}(x,t){\rm d}t, \quad (\overline{\bf A})_{ij}(t)=\frac{1}{|\Omega|}\int_\Omega a_{ij}(x,t){\rm d}x, \quad (\overline{\widehat{\bf A}})_{ij}=\frac{1}{|\Omega|}\int_0^1\!\!\!\int_\Omega a_{ij}(x,t){\rm d}x.$$
%In this subsection we consider the case that $A$ is a symmetric matrix.

We first show the following inequalities associated with  quantities defined in \eqref{def_underlineC} and \eqref{def_underlineC2}.
\begin{lemma}\label{liu-20240526}
  % Assume that matrix ${\bf A}(x,t)$ is  symmetric  for all $(x,t)\in \Omega\times\mathbb{R}$.
  Let $\underline{C}$ and $C_*$ be defined by
   \eqref{def_underlineC} and let  $C_*^+$,
   $\underline{C}^+$ and $\overline{C}$ be defined by \eqref{def_underlineC2}. Then there holds
$\underline{C}\leq C_*\leq \underline{C}^+,C_*^+\leq \overline{C}$.
\end{lemma}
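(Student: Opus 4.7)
\smallskip

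\noindent\textbf{Proof proposal.} The five quantities are all of the form ``$-$(a maximum or mean of the principal eigenvalue $\mu$ of some averaged version of ${\bf A}$)'', so the whole lemma reduces to comparing how one averages/maximises $\mu(\cdot)$ in the two variables $x$ and $t$. The key observation is that, since every ${\bf A}(x,t)$ is symmetric, its largest eigenvalue admits the Rayleigh representation
\begin{equation*}
\mu({\bf A}(x,t))=\sup_{|\xi|=1}\xi^{\rm T}{\bf A}(x,t)\xi,
\end{equation*}
so $\mu(\cdot)$ is a supremum of affine functionals on the space of symmetric matrices and is therefore \emph{convex}. Applying Jensen's inequality to any probability measure $\sigma$ then gives
\begin{equation*}
\mu\Bigl(\int {\bf A}\,{\rm d}\sigma\Bigr)\leq \int \mu({\bf A})\,{\rm d}\sigma.
\end{equation*}
This single convexity inequality, combined with the elementary bound $\max\!\int(\cdot)\leq \int\!\max(\cdot)$, will yield every required comparison.

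More concretely, my plan is to verify the inequalities one by one. First, $\underline{C}\leq C_*$ follows immediately from
\begin{equation*}
\max_{x\in\overline\Omega}\int_0^1\mu({\bf A}(x,t))\,{\rm d}t\leq \int_0^1\max_{x\in\overline\Omega}\mu({\bf A}(x,t))\,{\rm d}t,
\end{equation*}
after multiplying by $-1$. Next, for $C_*\leq C_*^+$, I apply Jensen's inequality in the $t$-variable at each fixed $x$:
\begin{equation*}
\mu(\widehat{\bf A}(x))=\mu\Bigl(\int_0^1{\bf A}(x,t)\,{\rm d}t\Bigr)\leq \int_0^1\mu({\bf A}(x,t))\,{\rm d}t,
\end{equation*}
then take the maximum over $x\in\overline\Omega$ and negate. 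The inequality $C_*\leq \underline{C}^+$ comes from Jensen's inequality in the $x$-variable at each $t$:
\begin{equation*}
\mu(\overline{\bf A}(t))\leq \frac{1}{|\Omega|}\int_\Omega\mu({\bf A}(x,t))\,{\rm d}x\leq \max_{x\in\overline\Omega}\mu({\bf A}(x,t)),
\end{equation*}
integrating in $t$ and using the elementary bound $\int\frac{1}{|\Omega|}\int \mu\,{\rm d}x\,{\rm d}t\leq \max_x\int\mu\,{\rm d}t$, then negating.

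Finally, the two remaining inequalities $C_*^+\leq \overline{C}$ and $\underline{C}^+\leq \overline{C}$ are handled by the same convexity, applied once in each variable (the order of averaging is irrelevant, so $\overline{\widehat{\bf A}}=\widehat{\overline{\bf A}}$): for $C_*^+\leq \overline{C}$ we use
\begin{equation*}
\mu(\overline{\widehat{\bf A}})=\mu\Bigl(\tfrac{1}{|\Omega|}\!\int_\Omega \widehat{\bf A}(x)\,{\rm d}x\Bigr)\leq \tfrac{1}{|\Omega|}\!\int_\Omega\mu(\widehat{\bf A}(x))\,{\rm d}x\leq\max_{x\in\overline\Omega}\mu(\widehat{\bf A}(x)),
\end{equation*}
and for $\underline{C}^+\leq \overline{C}$ we analogously apply Jensen's inequality in $t$ to $\overline{\widehat{\bf A}}=\int_0^1\overline{\bf A}(t)\,{\rm d}t$. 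There is no real obstacle: the only ingredient beyond a book-keeping of definitions is the convexity of the largest eigenvalue of symmetric matrices, which is immediate from the Rayleigh quotient. The proof is essentially a clean Jensen/Fubini argument once that observation is made.
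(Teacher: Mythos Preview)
Your proof is correct and considerably more elementary than the paper's. You identify the single mechanism behind all five inequalities---convexity of the largest eigenvalue of a symmetric matrix, via the Rayleigh quotient---and apply Jensen's inequality uniformly in $x$ or $t$ as needed. The paper, by contrast, treats the inequalities heterogeneously: $\underline C\le C_*$ is done as you do it; $C_*\le \underline C^+$ is done via the Rayleigh quotient (essentially your Jensen argument, \eqref{liu-20240528-3}); but $C_*\le C_*^+$ is deduced by invoking Theorem~\ref{liutheorem-0229-1}, Proposition~\ref{TH-liu-20240227}, Proposition~\ref{liu-20240326} and \cite{D2009} to sandwich limits of $\lambda(\omega,\rho)$, and $\underline C^+\le\overline C$, $C_*^+\le\overline C$ are obtained from the monotonicity of $\overline h(\omega)$ and $\overline\lambda(\rho)$ (using \cite{LLS2022}, \cite{D2009}, and Corollary~\ref{cor-liu-20240429}) together with their endpoint limits.

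What each buys: the paper's route exhibits these constants as endpoint values of the monotone functions $\overline h$, $\underline h$, $\overline\lambda$, $\underline\lambda$, reinforcing the picture in Fig.~\ref{liufig1} and tying the inequalities to the eigenvalue asymptotics that drive the rest of Section~\ref{Sect.5}. Your route is self-contained, requires nothing beyond the symmetry assumption on ${\bf A}$, and would serve equally well as a proof of Lemma~\ref{liu-20240526} placed anywhere in the paper, independent of the surrounding machinery.
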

\begin{proof}
   % We first show $\underline{C}\leq C_*$. Indeed,
   The fact that $\underline{C}\leq C_*$ can be observed directly by definition \eqref{def_underlineC}. We first show $C_*\leq C_*^+$.
   A direct application of Theorem \ref{liutheorem-0229-1} yields
    \begin{equation}\label{liu-20240526-1}
         \lambda(\omega,\rho)\geq C(\omega/\sqrt{\rho}), \quad \forall \omega,\rho>0,
    \end{equation}
    where $C(\omega/\sqrt{\rho})$ is defined as the critical value of Hamilton-Jacobi
equation \eqref{Liu01-20210716} with $\vartheta=\omega/\sqrt{\rho}$. %Fix any $\omega_1>0$.
% In view of  \eqref{liu-20240526-1}, by  Theorem \ref{TH1-1} and Proposition \ref{liu-20240326} we derive that
% $$\lambda(\omega_1,\rho)\geq \lim_{\omega\to 0}\lambda(\omega,\rho)\geq \lim_{\omega\to 0}C(\omega/\sqrt{\rho})=\underline{C}, \quad \forall \rho>0,$$
% which implies that
% $$C_*=\lim_{\omega_1\to0}\lim_{\rho\to 0}\lambda(\omega_1,\rho)\geq \underline{C}.$$
Letting $\omega\to +\infty$ in \eqref{liu-20240526-1}, by Propositions \ref{TH-liu-20240227} and  \ref{liu-20240326} one has $\overline{\lambda}(\rho)\geq C_*$ for all $\rho>0$, for which sending $\rho\to 0$ and applying \cite[Theorem 1]{D2009} yield $C_*^+\geq C_*$.

Next we show $C_*\leq \underline{C}^+$. Note that %for all $(x,t)\in \Omega\times(0,1)$,
\begin{equation}\label{liu-20240528-3}
    \mu({\bf A}(x,t))=\max_{{\bm \phi}\geq 0}\frac{{\bm \phi}^{\rm T}{\bf A}(x,t){\bm \phi}}{{\bm \phi}^{\rm T}{\bm \phi}},\quad \forall (x,t)\in \Omega\times \mathbb{R}.
\end{equation}
 Let $\overline{\bm \phi}(t)\geq 0$ be the principal eigenvector of $\overline{\bf A}(t)$ associated to $\mu(\overline{\bf A}(t))$. Choose $\overline{\bm \phi}(t)$ as the test vector in \eqref{liu-20240528-3} and integrate the resulting inequality over $\Omega$, then
$$ \frac{1}{|\Omega|}\int_\Omega \mu({\bf A}(x,t)){\rm d}x \geq \frac{\overline{\bm \phi}^{\rm T}(t)\overline{\bf A}(t)\overline{\bm \phi}(t)}{\overline{\bm \phi}^{\rm T}\overline{\bm \phi}}=\mu(\overline{\bf A}(t)), \quad \forall t\in\mathbb{R},$$
and thus it follows by \eqref{def_underlineC} that
$$ C_*\leq -\frac{1}{|\Omega|}\int_0^1\!\!\int_\Omega \mu({\bf A}(x,t)){\rm d}x{\rm d }t \leq -\int_0^1 \mu(\overline{\bf A}(t)){\rm d }t=\underline{C}^+.$$
This proves $C_*\leq \underline{C}^+$.

Finally, we show $\underline{C}^+\leq \overline{C}$ and $C_*^+\leq \overline{C}$.  Let $\overline{h}(\omega)$ be the principal eigenvalue of  \eqref{liu-20240525-3}, serving as the limit of $\lambda(\omega,\rho)$ as $\rho\to+\infty$. Note from \cite[Theorem 2.1]{LLS2022} %Proposition \ref{TH-liu-20240227} (see also )
that
\begin{equation}\label{liu-20240529-1}
    \lim_{\omega\to 0}\overline{h}(\omega)=\underline{C}^+\quad\text{and}\quad \lim_{\omega\to +\infty}\overline{h}(\omega)=\overline{C}.
\end{equation}
Due to the symmetry of matrix ${\bf A}$, \cite[Theorem 1.1]{LLS2022} implies that $\overline{h}(\omega)$ is non-decreasing in $\omega$. This together with \eqref{liu-20240529-1} yields $\underline{C}^+\leq\overline{C}$. %It remains to
To prove $C_*^+\leq \overline{C}$, we recall that $\overline{\lambda}(\rho)$ denotes the principal eigenvalue of \eqref{liu-20240227-1}. By \cite[Theorem 1]{D2009} and Corollary \ref{cor-liu-20240429} we derive
\begin{equation}\label{liu-20240529-2}
   \lim_{\rho\to 0}\overline{\lambda}(\rho)=C_*^+\quad\text{and}\quad \lim_{\rho\to +\infty}\overline{\lambda}(\rho)=\overline{C}.
\end{equation}
Since matrix ${\bf A}$ is symmetric, by the variational structure for problem \eqref{liu-20240227-1},
 $\overline{\lambda}(\rho)$ is non-decreasing  in $\rho$, so that \eqref{liu-20240529-2} implies $C_*^+\leq\overline{C}$. This completes the proof.
\end{proof}

We next establish the following double limits of principal eigenvalue $\lambda(\omega,\rho)$.
%To this end,
\begin{lemma}\label{liu-20240516}
%   Assume that ${\bf A}(x,t)$ is a symmetric matrix for all $(x,t)\in \Omega\times\mathbb{R}$.
Let $\underline{C}$ be defined in
   \eqref{def_underlineC} and let  $C_*^+$,
   $\underline{C}^+$ and $\overline{C}$ be defined by \eqref{def_underlineC2}. Then $\underline{C}\leq \lambda(\omega,\rho)\leq \overline{C}$ for all $\omega,\rho>0$ and the following double limits hold:
 \begin{align*}
  \lim_{(\omega,\rho)\to (+\infty,0)}\lambda(\omega,\rho)= C_*^+, \quad
\lim_{(\omega,\rho)\to (0,+\infty)}\lambda(\omega,\rho)=\underline{C}^+, \quad
\lim_{(\omega,\rho)\to (+\infty,+\infty)}\lambda(\omega,\rho)=\overline{C}.
          \end{align*}
\end{lemma}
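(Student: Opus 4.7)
All four assertions follow from assembling existing ingredients: (a) the monotonicity of $\omega\mapsto\lambda(\omega,\rho)$ established in Theorem \ref{TH1-1}; (b) the one-parameter limits already proved in Propositions \ref{TH-liu-20240227}, \ref{TH-liu-20240429}, and Theorem \ref{Bai-He-2020}; and (c) the auxiliary asymptotics of the scalar functions $\underline{h}$, $\overline{h}$, $\underline{\lambda}$, $\overline{\lambda}$ at the endpoints of their parameter, several of which appear already in the proof of Lemma \ref{liu-20240526}. The organising observation is that, combining Theorem \ref{TH1-1} with Proposition \ref{TH-liu-20240227}, for all $0<\omega_1\leq\omega\leq\omega_2$ and $\rho>0$ one has the sandwich
$$
\underline{\lambda}(\rho)\;\leq\;\lambda(\omega_1,\rho)\;\leq\;\lambda(\omega,\rho)\;\leq\;\lambda(\omega_2,\rho)\;\leq\;\overline{\lambda}(\rho).
$$

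\textbf{Uniform bounds.} Sending $\omega_2\to\infty$ gives $\lambda(\omega,\rho)\leq\overline{\lambda}(\rho)$. Since ${\bf A}$ is symmetric, problem \eqref{liu-20240227-1} has a Rayleigh quotient, so $\overline{\lambda}(\rho)$ is non-decreasing in $\rho$; combined with $\overline{\lambda}(\rho)\to\overline{C}$ as $\rho\to+\infty$ from \eqref{liu-20240529-2}, this yields $\lambda(\omega,\rho)\leq\overline{C}$. For the lower bound, Theorem \ref{liutheorem-0229-1} gives $\lambda(\omega,\rho)\geq C(\omega/\sqrt{\rho})$, and since $C(\vartheta)$ is non-decreasing in $\vartheta$ (Theorem \ref{liutheorem-0229-1}) with $C(\vartheta)\to\underline{C}$ as $\vartheta\to 0$ (Proposition \ref{liu-20240326}), we conclude $\lambda(\omega,\rho)\geq\underline{C}$.

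\textbf{Double limits.} I would handle each limit by the same sandwich pattern, taking the inner limit in $\rho$ first and then letting the auxiliary parameter $\omega_0$ drift to the appropriate endpoint. For $(\omega,\rho)\to(+\infty,0)$: the upper half is $\lambda(\omega,\rho)\leq\overline{\lambda}(\rho)\to C_*^+$ by \eqref{liu-20240529-2}; for the lower half, fix $\omega_0>0$ and use $\lambda(\omega,\rho)\geq\lambda(\omega_0,\rho)$ for $\omega\geq\omega_0$, then Theorem \ref{Bai-He-2020} yields $\lambda(\omega_0,\rho)\to\underline{h}(\omega_0)$ as $\rho\to 0$, and finally letting $\omega_0\to+\infty$ reduces to the auxiliary claim $\underline{h}(\omega_0)\to C_*^+$. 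The case $(\omega,\rho)\to(0,+\infty)$ is the mirror image: the lower half is $\lambda(\omega,\rho)\geq\underline{\lambda}(\rho)\to\underline{C}^+$, while the upper half uses $\lambda(\omega,\rho)\leq\lambda(\omega_0,\rho)\to\overline{h}(\omega_0)$ (Proposition \ref{TH-liu-20240429}) followed by $\overline{h}(\omega_0)\to\underline{C}^+$ as $\omega_0\to 0$, which is exactly \eqref{liu-20240529-1}. The case $(\omega,\rho)\to(+\infty,+\infty)$ is immediate: the upper half is $\lambda\leq\overline{\lambda}(\rho)\to\overline{C}$; the lower half uses $\lambda\geq\lambda(\omega_0,\rho)\to\overline{h}(\omega_0)$ and $\overline{h}(\omega_0)\to\overline{C}$ from \eqref{liu-20240529-1}.

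\textbf{Main obstacle.} The only non-routine items are the two auxiliary limits $\underline{h}(\omega)\to C_*^+$ as $\omega\to+\infty$ and $\underline{\lambda}(\rho)\to\underline{C}^+$ as $\rho\to+\infty$, each of which requires interchanging a minimum (resp.\ integral) with a parameter limit. Both are handled by a Dini argument exploiting symmetry of ${\bf A}$: for the first, $h(x,\omega)$ is continuous in $x$, non-decreasing in $\omega$ by \cite[Theorem~1.1]{LLS2022}, and converges pointwise to $-\mu(\widehat{\bf A}(x))$ by \cite[Theorem~2.1]{LLS2022}, so the convergence is uniform on $\overline{\Omega}$ and $\min_{x}h(x,\omega)\to-\max_{x}\mu(\widehat{\bf A}(x))=C_*^+$; for the second, $\lambda_0(t,\rho)$ is continuous in $t$, non-decreasing in $\rho$ by the variational structure of \eqref{liu-20240227-2}, and converges pointwise to $-\mu(\overline{\bf A}(t))$ by Corollary \ref{cor-liu-20240429}, so uniform convergence on $[0,1]$ lets us pass to the limit under $\int_0^1\cdot\,\mathrm{d}t$. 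Once these are in place the remainder is bookkeeping.
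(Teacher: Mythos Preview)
Your proposal is correct and follows essentially the same sandwich-and-pass-to-the-limit strategy as the paper. The only substantive difference is your derivation of the global lower bound $\lambda(\omega,\rho)\geq\underline{C}$: you go through $\lambda\geq C(\omega/\sqrt{\rho})\geq\underline{C}$ via Theorem~\ref{liutheorem-0229-1} and Proposition~\ref{liu-20240326}, whereas the paper uses $\lambda\geq\underline{\lambda}(\rho)$ together with the monotonicity of $\lambda_0(t,\rho)$ in $\rho$ and Dancer's small-diffusion limit to reach $\underline{C}$ directly. Both routes are short and valid. Your explicit Dini justification for $\underline{h}(\omega)\to C_*^+$ and $\underline{\lambda}(\rho)\to\underline{C}^+$ is a nice touch---the paper simply cites \cite[Theorem~2.1]{LLS2022} and Corollary~\ref{cor-liu-20240429} for the pointwise limits and leaves the interchange implicit; note also that for the integral $\int_0^1\lambda_0(t,\rho)\,\mathrm{d}t$ monotone convergence already suffices, so Dini is slightly more than needed there.
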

\begin{proof}
An  application of  Theorem \ref{TH1-1} and Proposition \ref{TH-liu-20240227}-{(i)} yields that for any fixed $\rho>0$,
\begin{equation}\label{liu-20240518-5}
    \lambda(\omega,\rho)\leq \lim_{\omega\to +\infty}\lambda(\omega,\rho)=\overline{\lambda}(\rho), \quad \forall \omega>0,
\end{equation}
     where $\overline{\lambda}(\rho)$ is the principal eigenvalue of elliptic problem \eqref{liu-20240227-1}. %Due to the symmetry of matrix ${\bf A}$, by the variational characterization it is easily seen that
     Note that $\overline{\lambda}(\rho)$ is non-decreasing  in $\rho$, so that \eqref{liu-20240529-2} implies $\overline{\lambda}(\rho)\leq \overline{C}$ for all $\rho >0$. % where the limit of $\overline{\lambda}(\rho)$ as $\rho \to +\infty$ follows by Proposition \ref{TH-liu-20240429}.
     This and \eqref{liu-20240518-5} imply
     $\lambda(\omega,\rho)\leq \overline{C}$ for all $\omega,\rho>0$.
     Similarly, it follows from Theorem \ref{TH1-1} and Proposition \ref{TH-liu-20240227}-{(ii)}  that %for any $\rho>0$,
  \begin{equation}\label{liu-20240518-3}
      \lambda(\omega,\rho)\geq \lim_{\omega\to 0}\lambda(\omega,\rho)=\int_0^1 \lambda_0(t,\rho){\rm d}t, \quad \forall \omega, \rho>0,
  \end{equation}
    where $\lambda_0(t,\rho)$ denotes the principal eigenvalue of \eqref{liu-20240227-2}. Since $\rho\mapsto\lambda_0(t,\rho)$ is non-decreasing  due to  the symmetry of
${\bf A}$, we apply the small diffusion limit established by \cite[Theorem 1]{D2009} or \cite[Theorem 1.4]{LL2016} to problem \eqref{liu-20240227-2}, then
$$\lambda_0(t,\rho)\geq \lim_{\rho\to 0}\lambda_0(t,\rho)=-\max_{x\in \overline{\Omega}}\mu({\bf A}(x,t)) \quad\text{for all }\rho>0 \text{ and }t\in \mathbb{R},$$
which together with \eqref{liu-20240518-3} implies      $\lambda(\omega,\rho)\geq \underline{C}$ for all $\omega,\rho>0$.

Hence, it remains to prove the double limits in Lemma \ref{liu-20240516}.

{\it Step 1}.    We first show $\lambda(\omega,\rho)\to C_*^+$ as $(\omega,\rho)\to (+\infty,0)$.
 Applying % the small diffusion limit established by
 \cite[Theorem 1]{D2009} or \cite[Theorem 1.4]{LL2016} to  \eqref{liu-20240227-1} yields  $\overline{\lambda}(\rho)\to\underline{C}^+$ as $\rho \to 0$.
    Hence, %by the small diffusion limit established in  \cite[Theorem 1]{D2009} or \cite[Theorem 1.4]{LL2016}  for elliptic case, we derive that
    letting $\rho \to 0$ in \eqref{liu-20240518-5} gives
    \begin{equation}\label{liu-20240518-1}
        \limsup_{(\omega,\rho)\to (+\infty,0)}\lambda(\omega,\rho)\leq \lim_{\rho\to 0}\overline{\lambda}(\rho)=C_*^+.
    \end{equation}
    Fix any $\omega_1>0$. Theorem \ref{TH1-1} implies $\lambda(\omega,\rho)\geq \lambda(\omega_1,\rho)$ for all $\omega\geq \omega_1$ and $\rho>0$, and thus by Theorem \ref{Bai-He-2020} we derive that
\begin{equation}\label{liu-20240518-2}
    \liminf_{(\omega,\rho)\to (+\infty,0)}\lambda(\omega,\rho)\geq \lim_{\rho\to 0}\lambda(\omega_1,\rho)=\underline{h}(\omega_1),%\min_{x\in\overline{\Omega}} h({\bf A}(x,\cdot),\omega_1),
    \quad\forall \omega_1>0,
\end{equation}
    where $\underline{h}(\omega_1)$ is the principal eigenvalue of \eqref{liu-20240317-1} with $\omega=\omega_1$. %the problem
%     \begin{equation*}%\label{liu-20240317-1}
%     \omega_1\frac{{\rm d}{\bm\phi}}{{\rm d} t}-{\bf A}(x,t){\bm\phi}= h{\bm\phi},
% \qquad
%   {\bm\phi}(t)={\bm\phi}(t+1), \,\,\,\,t\in\mathbb{R},
% \end{equation*}
Applying \cite[Theorem 2.1]{LLS2022} to \eqref{liu-20240317-1} %the above prove
gives $ \underline{h}(\omega_1)\to -\mu(\widehat{\bf A}(x))$ as $\omega_1\to +\infty$, and hence letting $\omega_1\to+\infty$  in \eqref{liu-20240518-2} gives
$$    \liminf_{(\omega,\rho)\to (+\infty,0)}\lambda(\omega,\rho)\geq C_*^+,$$
which and \eqref{liu-20240518-1} together   complete  Step 1. %proves $\lambda(\omega,\rho)\to \underline{C}^+$ as $(\omega,\rho)\to (+\infty,0)$.

  {\it Step 2}.  We next show $\lambda(\omega,\rho)\to \underline{C}^+$ as $(\omega,\rho)\to (0,+\infty)$. %Similar to Step 1,   it follows from Theorem \ref{TH1-1} and Proposition \ref{TH-liu-20240227}-{(ii)}  that for any $\rho>0$,
 %  \begin{equation}\label{liu-20240518-3}
 %      \lambda(\omega,\rho)\geq \lim_{\omega\to 0}\lambda(\omega,\rho)=\underline{\lambda}(\rho), \quad \forall \omega>0,
 %  \end{equation}
 %    where $\lambda_0(t,\rho)$ denotes the principal eigenvalue of \eqref{liu-20240227-2}.
 Recall that $\lambda_0(t,\rho)$ denotes the principal eigenvalue of \eqref{liu-20240227-2} for each $t\in \mathbb{R}$.
   We apply Proposition \ref{TH-liu-20240429}
    to \eqref{liu-20240227-2} %, where variable $t$ is a parameter,
    and derive $\lambda_0(t,\rho)\to -\mu(\overline{\bf A}(t))$ as $\rho\to+\infty$ for all $t\in\mathbb{R}$. This together with \eqref{liu-20240518-3} implies
    \begin{equation}\label{liu-20240518-4}
         \liminf_{(\omega,\rho)\to (0,+\infty)}\lambda(\omega,\rho)\geq \lim_{\rho\to +\infty}\underline{\lambda}(\rho)=\underline{C}^+.
    \end{equation}
 For any given $\omega_1>0$, we apply Theorem \ref{TH1-1} once again to obtain $\lambda(\omega,\rho) \leq \lambda(\omega_1,\rho)$ for all $\omega\in (0,\omega_1)$ and $\rho>0$. Hence, by Proposition \ref{TH-liu-20240429} we deduce
 \begin{equation}\label{liu-20240718-1}
     \limsup_{(\omega,\rho)\to (0,+\infty)}\lambda(\omega,\rho)\leq \lim_{\rho\to +\infty} \lambda(\omega_1,\rho)=\overline{h} (\omega_1),
 \end{equation}
where $\overline{h} (\omega_1)$ denotes the principal eigenvalue of
\eqref{liu-20240525-3} with $\omega=\omega_1$.
 Letting $\omega_1\to 0$ in \eqref{liu-20240718-1} and applying \cite[Theorem 2.1]{LLS2022} to \eqref{liu-20240525-3}, we have % yield
  $$\limsup_{(\omega,\rho)\to (0,+\infty)}\lambda(\omega,\rho)\leq\underline{C}^+.$$
  This and \eqref{liu-20240518-4} together establish Step 2.

  {\it Step 3}.    We finally prove $\lambda(\omega,\rho)\to \overline{C}$ as $(\omega,\rho)\to (+\infty, +\infty)$. Since $\lambda(\omega,\rho)\leq \overline{C}$ for all $\omega,\rho>0$, it suffices to show
  \begin{equation}\label{liu-20240518-7}
      \liminf_{(\omega,\rho)\to (+\infty,+\infty)}\lambda(\omega,\rho)\geq\overline{C}.
  \end{equation}
  Indeed, for any given $\omega_1>0$, it follows by Theorem \ref{TH1-1} that
  $\lambda(\omega,\rho)\geq \lambda(\omega_1,\rho)$ for all $\omega\geq \omega_1$ and $\rho>0$, which together with Proposition \ref{TH-liu-20240429} implies
  $$\liminf_{(\omega,\rho)\to (+\infty,+\infty)}\lambda(\omega,\rho)\geq \lim_{\rho\to+\infty}\lambda(\omega_1,\rho)=\overline{h} (\omega_1), \quad \forall\omega\geq \omega_1,$$
  for which, as in Step 2, letting $\omega_1\to +\infty$ and applying \cite[Theorem 2.1]{LLS2022} prove \eqref{liu-20240518-7}. The proof is now complete.
\end{proof}

 \begin{remark}
      The quantities in \eqref{def_underlineC} and \eqref{def_underlineC2} provide some double limits of $\lambda(\omega,\rho)$ %established by
      in Theorem \ref{TH-liu-20240106} and Lemma \ref{liu-20240516}; see Fig. \ref{liufig1}. While  the inequalities
    $\underline{C}\leq C_*\leq \min\{\underline{C}^+,C_*^+\}$ and $\max\{\underline{C}^+,C_*^+\} \leq \overline{C}$ hold as proved by Lemma \ref{liu-20240526},   %we mention that $\underline{C}^+$ and $C_*^+$ have no defined order relations
    there is no  specified order relationship between %between
    $\underline{C}^+$ and $C_*^+$.
For instance,    %Specifically, it can be observed that
if ${\bf A}$ is independent of $t$, then  %in general
$\underline{C}^+=\overline{C}> C_*^+$, and conversely, if ${\bf A}$ is independent of $x$, then  %in general
$C_*^+=\overline{C}>\underline{C}^+$.

\begin{figure}[htb]
    \centering
    \includegraphics[width=0.7\linewidth]{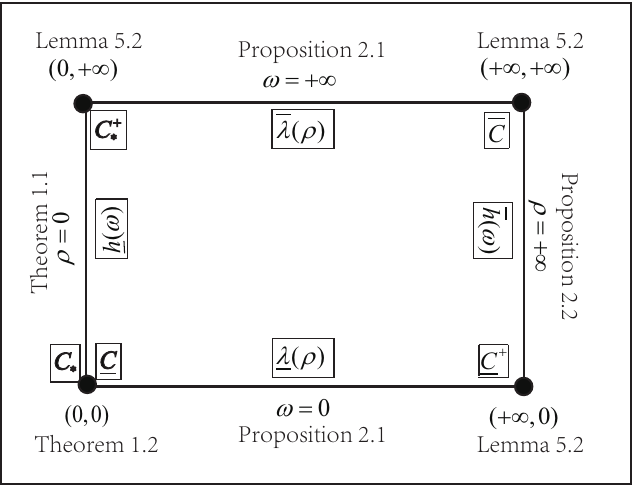}
    \caption{\small
    { The illustrations of the quantities defined in \eqref{def_underlineC} and \eqref{def_underlineC2}, which are some double limit values of the principal eigenvalue $\lambda(\omega,\rho)$ when $\omega$ tends to $0$ or $+\infty$ and/or $\rho$ tends to $0$ or $+\infty$.
    } }
  \label{liufig1}
   \end{figure}

In addition to the notations in \eqref{def_underlineC} and \eqref{def_underlineC2}, some limit values provided in Theorem \ref{Bai-He-2020}, Propositions \ref{TH-liu-20240227} and \ref{TH-liu-20240429} are  summarized in Fig. \ref{liufig1}, where
\begin{equation*}
\begin{split}
    \underline{h}(\omega)=\lim_{\rho\to 0}\lambda(\omega,\rho), \quad \overline{h}(\omega)=\lim_{\rho\to +\infty}\lambda(\omega,\rho), \quad \forall \omega>0,\\
    \underline{\lambda}(\rho)=\lim_{\omega\to 0}\lambda(\omega,\rho), \quad \overline{\lambda}(\rho)=\lim_{\omega\to +\infty}\lambda(\omega,\rho), \quad \forall \rho>0.
\end{split}
\end{equation*}
%For any $\omega>0$, let $\underline{h}(\omega)$ and $\overline{h}(\omega)$ be defined in Theorem \ref{Bai-He-2020} and Proposition \ref{TH-liu-20240429},  serving as the limits of $\lambda(\omega,\rho)$ as $\rho\to 0$ and $\rho\to+\infty$, respectively.
%It can be shown by \cite[Theorems 1.1 and 1.2]{LLS2022} that $C_*\leq \underline{h}(\omega)\leq C_*^+$ and $\underline{C}^+\leq \overline{h}(\omega)\leq \overline{C}$ for all $\omega>0$, and %moreover
%both of them are non-decreasing in $\omega$.
% These notations
%  are crucial  in classifying the topological structures of level sets.
\end{remark}

Based on the above notations, we have the following result.

\begin{lemma}\label{d-ell}
%   Assume that ${\bf A}(x,t)$ is a symmetric matrix for all $(x,t)\in \Omega\times\mathbb{R}$.
For each %$t\in\mathbb{R}$ and
$\rho>0$, let $\overline{\lambda}(\rho)$ and $\underline{\lambda}(\rho)$ be defined in Proposition {\rm \ref{TH-liu-20240227}}. %the principal eigenvalues of \eqref{liu-20240227-1} and \eqref{liu-20240227-2}, respectively.
   \begin{itemize}
       \item [{\rm (i)}] If $\underline{C}<\underline{C}^+$, then for $\ell\in (\underline{C},\underline{C}^+)$, there exists a unique $\rho_\ell>0$ such that $\underline{\lambda}(\rho_\ell)=\ell$. Moreover, $\rho_\ell$ is increasing in $\ell$, $\rho_\ell \to 0$ as $\ell\to \underline{C}$, and $\rho_\ell \to +\infty$ as $\ell\to \underline{C}^+$.
\smallskip
       \item [{\rm (ii)}] If $C_*^+<\overline{C}$, then  for $\ell\in (C_*^+,\overline{C})$, there exists a unique $\underline{\rho}_\ell \in (0,\rho_\ell]$ such that $\overline{\lambda}(\underline{\rho}_\ell)=\ell$. Moreover, $\underline{\rho}_\ell$ is increasing in $\ell$, $\underline{\rho}_\ell \to 0$ as $\ell\to C_*^+$, and $\underline{\rho}_\ell \to +\infty$ as $\ell\to \overline{C}$.
   \end{itemize}
  Moreover, $\rho_\ell=\underline{\rho}_\ell$ if and only if \eqref{n-s-condition} holds.
    % $\underline{\lambda}(\rho)$ be defined in Theorem {\rm\ref{TH-liu-20240227}}-{\rm(ii)}. % be the principal eigenvalue of \eqref{liu-20240227-2} with $\omega=0$.
\end{lemma}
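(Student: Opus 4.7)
My approach is to reduce the lemma to two separate assertions: continuity together with strict monotonicity of $\underline{\lambda}(\rho)$ and $\overline{\lambda}(\rho)$ on the relevant intervals, combined with the four boundary limits (already contained in the proof of Lemma~\ref{liu-20240516} via \cite{D2009,LL2016}, Proposition~\ref{TH-liu-20240429} and Corollary~\ref{cor-liu-20240429}). Because ${\bf A}(x,t)$ is symmetric, both $\lambda_0(t,\rho)$ in \eqref{liu-20240227-2} and $\overline{\lambda}(\rho)$ in \eqref{liu-20240227-1} admit Rayleigh quotient characterisations. Thus they are Lipschitz continuous in $\rho$, and by the envelope theorem one obtains $\partial_\rho\overline{\lambda}(\rho)=\sum_i d_i\int_\Omega|\nabla\phi_i|^2\big/\int_\Omega|{\bm\phi}|^2\ge 0$, together with the pointwise analogue for $\lambda_0(t,\rho)$. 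Differentiating $\underline{\lambda}(\rho)=\int_0^1\lambda_0(t,\rho)\,dt$ under the integral sign then yields non-decreasing monotonicity of $\underline{\lambda}$ as well.

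The crucial step is to upgrade these inequalities to strict ones. For (i), I argue by contradiction: if $\underline{\lambda}(\rho_1)=\underline{\lambda}(\rho_2)$ for some $0<\rho_1<\rho_2$, then $\underline{\lambda}$ is constant on $[\rho_1,\rho_2]$, and the derivative formula forces $\nabla_x\phi_i(\cdot,t,\rho)\equiv 0$ for a.e.\ $(t,\rho)$. Hence the principal eigenfunction ${\bm\phi}(t,\rho)$ of \eqref{liu-20240227-2} is $x$-independent, so ${\bf A}(x,t){\bm\phi}(t,\rho)=-\lambda_0(t,\rho){\bm\phi}(t,\rho)$ for every $x\in\Omega$. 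Since ${\bm\phi}>0$, Krein--Rutman identifies $-\lambda_0(t,\rho)=\mu({\bf A}(x,t))$, independent of $x$; averaging in $x$ then gives $\overline{\bf A}(t){\bm\phi}=-\lambda_0(t,\rho){\bm\phi}$, so $\mu(\overline{\bf A}(t))=\max_{x\in\overline\Omega}\mu({\bf A}(x,t))$ for a.e.\ $t$, and integrating yields $\underline{C}=\underline{C}^+$, contradicting the hypothesis. Combined with the continuity and boundary limits, the intermediate value theorem now produces the unique $\rho_\ell$, its monotone dependence on $\ell$, and the correct limits at the endpoints of $(\underline{C},\underline{C}^+)$. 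Part (ii) follows by the same scheme: local constancy of $\overline{\lambda}$ forces the eigenfunction of \eqref{liu-20240227-1} to be $x$-independent, which via the same Krein--Rutman argument gives $\mu(\widehat{\bf A}(x))\equiv\mu(\overline{\widehat{\bf A}})$, hence $C_*^+=\overline{C}$, contradicting the hypothesis of (ii).

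For the comparison $\underline{\rho}_\ell\le\rho_\ell$ (in the regime where both are defined), I invoke Theorem~\ref{TH1-1}: the monotonicity of $\omega\mapsto\lambda(\omega,\rho)$ passes to the limits $\omega\to 0^+$ and $\omega\to+\infty$, giving $\underline{\lambda}(\rho)\le\overline{\lambda}(\rho)$ for every $\rho>0$; at $\rho=\rho_\ell$ this reads $\overline{\lambda}(\rho_\ell)\ge\ell=\overline{\lambda}(\underline{\rho}_\ell)$, and the monotonicity of $\overline{\lambda}$ then forces $\rho_\ell\ge\underline{\rho}_\ell$. For the final equivalence, $\rho_\ell=\underline{\rho}_\ell$ is equivalent to $\underline{\lambda}(\rho_\ell)=\overline{\lambda}(\rho_\ell)$, which (using once more that $\omega\mapsto\lambda(\omega,\rho_\ell)$ is non-decreasing with precisely these two limits) means $\omega\mapsto\lambda(\omega,\rho_\ell)$ is constant. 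Theorem~\ref{TH1}, applied with the constant choice $\rho(\omega)\equiv\rho_\ell$ (so $\rho'\equiv 0$), then identifies this with condition \eqref{n-s-condition}. The main technical obstacle is the strict monotonicity step, which requires carefully converting local constancy of a system eigenvalue into an $x$-independence statement for the eigenfunction, and then leveraging the Perron--Frobenius structure of the cooperative symmetric matrix to collapse the relevant boundary constants in \eqref{def_underlineC} and \eqref{def_underlineC2}.
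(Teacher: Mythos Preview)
Your proposal is correct and follows essentially the same route as the paper: both arguments use the variational structure of the symmetric elliptic problems \eqref{liu-20240227-1} and \eqref{liu-20240227-2} to obtain monotonicity in $\rho$, invoke the endpoint limits (via \cite{D2009,LL2016} and Corollary~\ref{cor-liu-20240429}) for existence, and derive uniqueness by showing that any failure of strict monotonicity collapses the relevant boundary constants to a contradiction. The paper phrases the uniqueness step via the dichotomy ``$\lambda_0(t,\rho)$ is either strictly increasing or constant in $\rho$,'' whereas you make the mechanism explicit through the derivative formula $\partial_\rho\lambda_0=\sum_i d_i\int_\Omega|\nabla\phi_i|^2/\int_\Omega|{\bm\phi}|^2$ and the resulting $x$-independence of the eigenfunction; both lead to the same collapse $\underline{C}=\underline{C}^+$ (resp.\ $C_*^+=\overline{C}$). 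You also spell out the inequality $\underline{\rho}_\ell\le\rho_\ell$ via $\underline{\lambda}(\rho)\le\overline{\lambda}(\rho)$ and unpack the final equivalence through Theorem~\ref{TH1} more explicitly than the paper, which simply cites Theorem~\ref{TH1} as a direct consequence.
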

\begin{proof}
%For any $t\in\mathbb{R}$ and $\rho>0$, recall that  $\lambda_0(t,\rho)$ is the principal eigenvalue of problem \eqref{liu-20240227-2}. Applying
We shall prove part (i), and part (ii) can follow by the rather similar arguments.  A direct application of \cite[Theorem 1]{D2009} or \cite[Theorem 1.4]{LL2016} to problem \eqref{liu-20240227-2} yields  $\lambda_0(t,\rho)\to -\max_{x\in \overline{\Omega}} \mu({\bf A}(x,t))$ as $\rho \to 0$, where     $\lambda_0(t,\rho)$ denotes the principal eigenvalue of \eqref{liu-20240227-2}. %$\mu({\bf A}(x,t))$ denotes the maximal eigenvalue of ${\bf A}(x,t)$.
Then applying Corollary \ref{cor-liu-20240429} to problem \eqref{liu-20240227-2}, we have $\lambda_0(t,\rho)\to -\mu(\overline{\bf A}(t))$ as $\rho \to +\infty$.
Hence, recall from the definitions in \eqref{def_underlineC} and  \eqref{def_underlineC2} that %, we derive
\begin{equation}\label{liu-33}
    \lim_{\rho\to 0}\underline{\lambda}(\rho)= \underline{C}\quad\text{and}\quad \lim_{\rho\to +\infty}\underline{\lambda}(\rho)= \underline{C}^+.
\end{equation}
%$\underline{\lambda}(\rho)\to \underline{C}$ as $\rho\to 0$ and $\underline{\lambda}(\rho)\to \underline{C}^+$ as $\rho\to\infty$.
By continuity, for each $\ell\in (\underline{C},\underline{C}^+)$, there exists some $\rho_\ell>0$ such that $\underline{\lambda}(\rho_\ell)=\ell$.

We next prove the uniqueness of such $\rho_\ell$. Suppose that   $\underline{\lambda}(\rho_*)=\ell$ for some $\rho_*\neq \rho_\ell$.  Without loss of generality, assume $\rho_*< \rho_\ell$. Due to the symmetry of matrix ${\bf A}(x,t)$, it follows from the variational structure of %problem
\eqref{liu-20240227-2} that  $\lambda_0(t,\rho)$ is increasing in $\rho$ if ${\bf A}$ is a non-constant matrix in $x$, and otherwise  $\lambda_0(t,\rho)$ is constant in $\rho$. Hence,
%Recall that $\mu_0(d, A, t)$ denotes the principal eigenvalue of \eqref{liu-8} with $\lambda=A$. By the monotonicity of $\mu_0$ with respect to $d$, we have %it holds that
%\begin{equation}\label{liu-37}
  $\lambda_0(t,\rho_*)\leq \lambda_0(t,\rho_\ell)$ for all  $t\in\mathbb{R}$.
%\end{equation}
%$\mu_0(\rho_*, A, t)\leq \mu_0(d_A, A, t)$ for all $t\in\mathbb{R}$.
 By our assumption $\underline{\lambda}(\rho_*)=\underline{\lambda}(\rho_\ell)=\ell$, it follows that
 %Letting $d=\rho_*$ and $d=d_A$ in  \eqref{liu-8-1}, respectively, gives $\hat{\mu}_0(\rho_*, A)=\hat{\mu}_0(d_A, A)= 0$ due to  $\underline{\lambda}(d_A)=\underline{\lambda}(\rho_*)=A$. This together with \eqref{liu-37} yields
 $\lambda_0(t,\rho_*)= \lambda_0(t,\rho_\ell)$ for all  $t\in\mathbb{R}$.
%Consequently, %by \eqref{liu-8},
This implies $\lambda_0(t,\rho)=\lambda_0(t)$ independent of $\rho$, and thus $\underline{\lambda}(\rho)\equiv\ell$ for all $\rho>0$. By \eqref{liu-33}, one has  $\underline{C}=\underline{C}^+=\ell$, contradicting our assumption $\underline{C}<\underline{C}^+$. This proves the uniqueness of $\rho_\ell$.

Since $\underline{\lambda}(\rho)$ is non-decreasing in $\rho$,  the uniqueness of $\rho_\ell$ implies that $\underline{\lambda}(\rho)$ is increasing in $\rho$, and thus by definition,  $\rho_\ell$ is increasing in $\ell$. Moreover, due to \eqref{liu-33}, it is clear that $\rho_\ell\to 0$ as $\ell\to\underline{C}$ and $\rho_\ell\to +\infty$ as $\ell\to\underline{C}^+$.

Finally, the fact that $\rho_\ell=\underline{\rho}_\ell$ if and only if \eqref{n-s-condition} holds is a direct consequence of Theorem \ref{TH1}. Lemma \ref{d-ell} is thus proved.
\end{proof}

For each value
$\ell\in (\underline{C}, \overline{C})$, %we  consider
define the level set
$$
\Sigma_\ell:=\{(\omega,\rho): \, \omega,\rho>0, \quad  \lambda(\omega,\rho)=\ell\}.
$$
% As the principle eigenvalue $\lambda(\omega,\rho)$ is monotone in $\omega$ as shown in Theorem \ref{TH1}, each level set can be parameterized
%as a graph of some function.
%We begin
Our aim is to  characterize  the topological structures of all level sets. % we prepare the following %result, for which the proof can be found
%To this end, we first prepare the following  result.

%The main result in this section can be stated as follows.

\begin{theorem}\label{liu-levelset}
%Let $\lambda(\omega,\rho)$ be the principal eigenvalue of \eqref{Liu1}.
%Assume $c_2-c_1\not\equiv \hat{c}_2-\hat{c}_1$.
% Suppose that matrix ${\bf A}(x,t)$ is symmetric for all $(x,t)\in \Omega\times\mathbb{R}$.
%For any $\ell\in (\underline{C},\underline{C}^+)$ and $\ell\in (C_*^+,\overline{C})$,
%let $\rho_\ell>0$ and $\underline{\rho}_\ell>0$ be given by Lemma {\rm\ref{d-ell}}. %We assume additionally {\rm (H)} holds for the case $C_*=C_*^+$.
%Assume  $\varphi_{\ell}\not\in {\bf I}$, then  the following assertions hold.
For any $\ell\in(\underline{C},\overline{C})$, let constants $0<\underline{\rho}_\ell\leq \rho_\ell$ be defined  by Lemma {\rm\ref{d-ell}} such that $\underline{\lambda}(\rho_\ell)=\overline{\lambda}(\underline{\rho}_\ell)=\ell$.
If $\underline{\rho}_\ell= \rho_\ell$, then
$\Sigma_\ell=\{(\omega,\rho): \, \rho=\rho_\ell, \,\, \,\omega>0 \}$; Otherwise
% If $\underline{\rho}_\ell< \rho_\ell$, then
there exist uniquely a continuous function $\omega_\ell:{\rm dom}(\omega_\ell)\mapsto (0,+\infty)$ such that
$$\Sigma_\ell=\{(\omega,\rho): \, \omega=\omega_\ell(\rho), \,\, \,\rho\in {\rm dom}(\omega_\ell) \}.$$
 The domain ${\rm dom}(\omega_\ell)$ and
  asymptotic behaviors of function $\omega_\ell$  can be characterized as follows.

% where the domain ${\rm dom}(\omega_\ell)$ of function $\omega_\ell$ is defined as
% \begin{equation*}
%     {\rm dom}(\omega_\ell)=\left\{\begin{array}{ll}
%     \smallskip
%      (0,\rho_\ell],    & \text{for } \ell \in (\underline{C}, \min\{C_*^+,\underline{C}^+\}),\\
%       \smallskip
%       (\underline{\rho}_\ell,\rho_\ell],    &  \text{for }  \ell \in (C_*^+, \underline{C}^+) \text{ and } C_*^+< \underline{C}^+,\\
%       \smallskip
%       (0, +\infty), &  \text{for }  \ell \in [\underline{C}^+, C_*^+) \text{ and } \underline{C}^+<C_*^+,\\
%        (\underline{\rho}_\ell, +\infty), &  \text{for }  \ell \in (\max\{C_*^+,\underline{C}^+\}, \overline{C}),
%     \end{array}\right.
% \end{equation*}
% with $0<\underline{\rho}_\ell<\rho_\ell$ being  defined  by Lemma {\rm\ref{d-ell}}.

\begin{itemize}
    \item[{\rm(1)}] If
    $\ell\in (\underline{C}, \min\{C_*^+,\underline{C}^+\})$,
    then $ {\rm dom}(\omega_\ell)= (0,\rho_\ell)$ and
    %there exist a continuous
    %function
%$\omega_\ell$ satisfies
$\omega_\ell(\rho)\to 0$ as $\rho \nearrow \rho_\ell$. %$\omega_\ell(\rho)>0$ for $\rho\in(0,\rho_\ell)$.  %Furthermore,
The asymptotic behaviors of $\omega_\ell(\rho)$ at  $\rho=0$ are %presented
as follows.
 \smallskip
\begin{itemize}
 \item [{\rm(i)}] If $\ell\in (\underline{C}, C_*]$, %or $\ell=C_*<C_*^+$, %and $C_*<C_*^+$,
 then
 $\omega_\ell(\rho)\to 0$ as $\rho\searrow 0$. Furthermore, if $\ell\in (\underline{C}, C_*)$,
 there exist $0<{\underline c}<{\overline c}$ independent of $\rho$ such that
%\begin{equation}%\label{liu20221208-1}
    ${\underline c} \sqrt{\rho}\leq \omega_\ell(\rho)\leq {\overline c} \sqrt{\rho}$
 for small $\rho>0$.

 % \smallskip
 %  \item [{\rm(ii)}] If $\ell= C_*=C_*^+$, we assume additionally {\rm (H) holds}, then $\omega_\ell(\rho)\to 0$ as $\rho\to 0$.
 \smallskip
 \item [{\rm(ii)}] If $\ell\in (C_*,\min\{C_*^+,\underline{C}^+\})$, then %$\underline{\rho}_\ell=0$ and
 $\omega_\ell(\rho)\to \underline{h}^{-1}(\ell)$ as $\rho\searrow 0$. % where function $\underline{h}$ is defined in Theorem {\rm\ref{Bai-He-2020}}.
 %Furthermore, if the assumption holds, then  $w'_\ell(0)\leq 0$.
\end{itemize}

\medskip
\item [{\rm(2)}]   If $C_*^+<\underline{C}^+$ and $\ell\in (C_*^+,\underline{C}^+)$, then
$ {\rm dom}(\omega_\ell)=(\underline{\rho}_\ell,\rho_\ell)$,
and function
 $\omega_\ell$
 satisfies
$\omega_\ell(\rho)\to 0$ as $\rho\nearrow\rho_\ell$   and $\omega_\ell(\rho)\to +\infty$ as $\rho\searrow\underline{\rho}_\ell$. % and $\omega_\ell(\rho)>0$ for $\rho\in(\underline{\rho}_\ell,\rho_\ell)$.

\medskip
\item [{\rm(3)}]   If $\underline{C}^+<C_*^+$ and $\ell\in [\underline{C}^+,C_*^+)$, then $ {\rm dom}(\omega_\ell)= (0,+\infty)$,  and
 function %$\omega_\ell:(0,+\infty)\mapsto [0,+\infty)$
 $\omega_\ell$ satisfies
 %$\omega_\ell(\rho)>0$, % for all $\rho\in (0,+\infty)$,
$\omega_\ell(\rho)\to \underline{h}^{-1}(\ell)$ as $\rho\searrow 0$  and $\omega_\ell(\rho)\to \overline{h}^{-1}(\ell)$ as $\rho\nearrow +\infty$.

\medskip
\item[{\rm (4)}] If $ \ell \in (\max\{C_*^+,\underline{C}^+\}, \overline{C})$, then $ {\rm dom}(\omega_\ell)=(\underline{\rho}_\ell, +\infty)$, and
function %$\omega_\ell:(\underline{\rho}_\ell, +\infty)\mapsto [0,+\infty)$
$\omega_\ell$ satisfies %$\omega_\ell>0$,
$\omega_\ell(\rho)\to +\infty$ as $\rho\searrow\underline{\rho}_\ell$ and $\omega_\ell(\rho)\to \overline{h}^{-1}(\ell)$ as $\rho\nearrow +\infty$.
\end{itemize}
\end{theorem}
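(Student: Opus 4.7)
The plan is to exploit the strict monotonicity of $\omega\mapsto\lambda(\omega,\rho)$ from Theorem \ref{TH1-1} to define $\omega_\ell(\rho)$ implicitly as the unique solution of $\lambda(\omega,\rho)=\ell$, and then to read off its domain and boundary behavior from the asymptotic results already established. By the dichotomy in Theorem \ref{TH1}, for each fixed $\rho>0$ the map $\omega\mapsto\lambda(\omega,\rho)$ is either strictly increasing on $(0,+\infty)$ or identically constant; the constant case corresponds, via Lemma \ref{d-ell}, precisely to $\rho=\rho_\ell=\underline{\rho}_\ell$ and yields the vertical-line level set $\{(\omega,\rho):\rho=\rho_\ell,\,\omega>0\}$. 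Under the generic assumption $\underline{\rho}_\ell<\rho_\ell$, Proposition \ref{TH-liu-20240227} makes $\omega\mapsto\lambda(\omega,\rho)$ a continuous strictly increasing bijection from $(0,+\infty)$ onto $(\underline{\lambda}(\rho),\overline{\lambda}(\rho))$, so $\omega_\ell(\rho)$ is uniquely defined and, by continuity of $\lambda$ together with uniqueness, continuous in $\rho$ precisely when $\underline{\lambda}(\rho)<\ell<\overline{\lambda}(\rho)$.

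Next I will translate this admissibility condition into a range of $\rho$ using Lemma \ref{d-ell}. Since $\underline{\lambda}$ is non-decreasing from $\underline{C}$ to $\underline{C}^+$, the condition $\underline{\lambda}(\rho)<\ell$ reduces to $\rho<\rho_\ell$ when $\ell<\underline{C}^+$ and holds automatically when $\ell\geq\underline{C}^+$. Symmetrically, $\overline{\lambda}(\rho)>\ell$ reduces to $\rho>\underline{\rho}_\ell$ when $\ell>C_*^+$ and holds automatically when $\ell\leq C_*^+$. Intersecting these four sub-conditions reproduces the four domain descriptions $(0,\rho_\ell)$, $(\underline{\rho}_\ell,\rho_\ell)$, $(0,+\infty)$, $(\underline{\rho}_\ell,+\infty)$ of cases (1)--(4). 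The behavior at the finite endpoints is then automatic: as $\rho\nearrow\rho_\ell$, $\underline{\lambda}(\rho)\nearrow\ell$ forces $\omega_\ell(\rho)\to 0$ by strict monotonicity, and symmetrically $\omega_\ell(\rho)\to+\infty$ as $\rho\searrow\underline{\rho}_\ell$.

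For the unbounded endpoints $\rho\to+\infty$ and $\rho\to 0$ with $\ell>C_*$, I plan to combine the relevant one-parameter asymptotic with a contradiction argument. At $\rho\to+\infty$ (cases (3) and (4)), Proposition \ref{TH-liu-20240429} gives $\lambda(\omega,\rho)\to\overline{h}(\omega)$; boundedness of $\omega_\ell$ away from $0$ and $+\infty$ (using $\overline{h}(0^+)=\underline{C}^+\leq\ell$ and $\overline{h}(+\infty)=\overline{C}>\ell$) then yields $\omega_\ell(\rho)\to\overline{h}^{-1}(\ell)$. At $\rho\to 0$ with $\ell>C_*$ (cases (1)(ii) and (3)), Theorem \ref{Bai-He-2020} gives $\lambda(\omega,\rho)\to\underline{h}(\omega)$, and the bounds $\ell>C_*=\underline{h}(0^+)$ (excluding $\omega_\ell\to 0$) together with $\ell<C_*^+=\overline{\lambda}(0^+)$ (excluding $\omega_\ell\to+\infty$) analogously give $\omega_\ell(\rho)\to\underline{h}^{-1}(\ell)$.

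The hardest part is case (1)(i), $\ell\in(\underline{C},C_*]$, where $\omega_\ell(\rho)\to 0$ as $\rho\to 0$ along the critical scaling $\omega\asymp\sqrt{\rho}$, so none of the one-parameter limits applies directly. Here I will invoke Theorem \ref{TH-liu-20240106}: along any subsequence with $\omega_\ell(\rho_k)/\sqrt{\rho_k}\to\vartheta\in[0,+\infty]$, continuity forces $C(\vartheta)=\ell$. For $\ell\in(\underline{C},C_*)$, the continuity and monotonicity of $C$ with $C(0^+)=\underline{C}<\ell<C_*=C(+\infty)$ (Theorem \ref{liutheorem-0229-1} and Proposition \ref{liu-20240326}) make $C^{-1}(\ell)$ a compact interval $[\underline{c},\overline{c}]\subset(0,+\infty)$, immediately giving $\underline{c}\sqrt{\rho}\leq\omega_\ell(\rho)\leq\overline{c}\sqrt{\rho}$ for small $\rho$. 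At the borderline $\ell=C_*$, no finite $\vartheta$ satisfies $C(\vartheta)=C_*$, so $\omega_\ell(\rho)/\sqrt{\rho}\to+\infty$; to conclude $\omega_\ell(\rho)\to 0$ itself, I will exclude the alternatives $\omega_\ell\to\omega^*>0$ (which would give $\underline{h}(\omega^*)>C_*=\ell$) and $\omega_\ell\to+\infty$ (which would give a limit $\geq C_*^+>C_*$), using the strict monotonicity of $\underline{h}$ and $\overline{\lambda}$ inherited from the symmetry of $\mathbf{A}$.
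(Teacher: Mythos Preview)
Your approach is essentially the same as the paper's: define $\omega_\ell(\rho)$ via the strict monotonicity in $\omega$ (Theorem~\ref{TH1}), read off the domain from the inequality $\underline{\lambda}(\rho)<\ell<\overline{\lambda}(\rho)$, and identify the endpoint behavior from Theorem~\ref{Bai-He-2020}, Propositions~\ref{TH-liu-20240227} and~\ref{TH-liu-20240429}, Lemma~\ref{liu-20240516}, and Theorem~\ref{TH-liu-20240106}.

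One small ordering issue in case~(1)(i): your sentence ``along any subsequence with $\omega_\ell(\rho_k)/\sqrt{\rho_k}\to\vartheta\in[0,+\infty]$, continuity forces $C(\vartheta)=\ell$'' tacitly assumes $(\omega_\ell(\rho_k),\rho_k)\to(0,0)$, which is exactly the hypothesis of Theorem~\ref{TH-liu-20240106}. For $\ell<C_*$ you have not yet established $\omega_\ell(\rho)\to 0$, so the theorem does not directly apply when $\vartheta=+\infty$. The paper (and you, for the borderline $\ell=C_*$) handles this by first ruling out $\omega_\ell(\rho_k)\to\omega_*\in(0,+\infty)$ via Theorem~\ref{Bai-He-2020} (giving $\ell=\underline{h}(\omega_*)\geq C_*$) and $\omega_\ell(\rho_k)\to+\infty$ via Lemma~\ref{liu-20240516} (giving $\ell=C_*^+$), both contradictions; only then does Theorem~\ref{TH-liu-20240106} legitimately yield the $\sqrt{\rho}$ bounds. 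Reorder that paragraph accordingly and the proof is complete.
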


Theorem \ref{liu-levelset} yields five types of topological structures for level sets $\Sigma_\ell$,   %for $\ell\in (\underline{C},C_*)$, $\ell\in (C_*,C_*^+)$, and $\ell\in(C_*^+,\overline{C})$, respectively, as
which are illustrated by various shaded areas in Fig. \ref{liufig2}. The pictures presented in Fig. \ref{liufig2} are focused on  the scenarios where all quantities in \eqref{def_underlineC} and \eqref{def_underlineC2} are distinct. % while the topological structures  will be some special and limit cases of the outcomings in Fig. \ref{liufig2}   if  some qualities equal. %   $\underline{C}< C_*<C_*^+<\underline{C}^+ <\overline{C}$ and $\underline{C}< C_*<\underline{C}^+<C_*^+< \overline{C}$.
%This is more complicated than the findings in \cite[Theorem 1.3]{LL2022} concerning  scalar periodic-parabolic operators, where two different topological structures were found.

 \begin{figure}[htb]
    \centering
\includegraphics[width=1\linewidth]{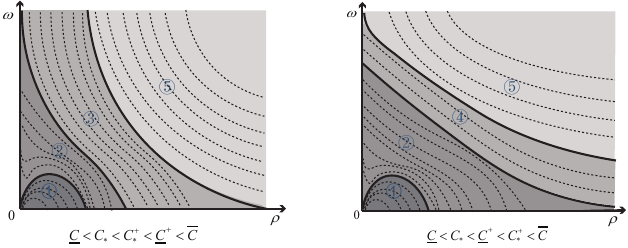}
    \caption{\small
    {A sketch of the  level  sets  of principal eigenvalue $\lambda(\omega,\rho)$ in  $\rho$-$\omega$  plane as shown in Theorem \ref{liu-levelset}, which  are illustrated by the bounded or unbounded curves presenting the graphs of function $\omega=\omega_{\ell}(\rho)$  for the cases     $\underline{C}< C_*<C_*^+<\underline{C}^+ <\overline{C}$ and $\underline{C}< C_*<\underline{C}^+<C_*^+< \overline{C}$.  %$\underline{A}=A_*<\overline{A}$, $\underline{A}<A_*<\overline{A}$, and $\underline{A}<A_*=\overline{A}$, respectively.
    } }
  \label{liufig2}
   \end{figure}

\begin{itemize}
  \item[\textcircled{1}]
 For
 $\ell\in (\underline{C},C_*)$,
   part (1)-{(i)} of Theorem \ref{liu-levelset}
  asserts  all level sets $\Sigma_\ell$  are characterized by bounded curves
   connecting  $(0,0)$ and $(\rho_\ell, 0)$ in $\rho$-$\omega$ plane as illustrated by  curves in  %shaded
   areas \textcircled{1}  of Fig. \ref{liufig2}.
   The estimate in part {\rm(1)}-{\rm(i)}  indicates the curve $\omega=\omega_\ell(\rho)$ has asymptotic behavior of the form   $\omega\sim \sqrt{\rho}$ at the origin. %This answers a question in \cite{K2002} for the case
\smallskip
   \item [\textcircled{2}]
  For   $\ell\in (C_*,\min\{C_*^+,\underline{C}^+\})$,
   part (1)-{(ii)} in  Theorem \ref{liu-levelset}
  suggests level sets $\Sigma_\ell$  are characterized by bounded curves
   connecting   points $(\rho_\ell,0)$ and $(0,\underline{h}^{-1}(\ell))$, which are illustrated by  curves in  %shaded
   areas \textcircled{2} of Fig. \ref{liufig2}.

\smallskip
\item[\textcircled{3}]
For $C_*^+<\underline{C}^+$ and $\ell\in (C_*^+,\underline{C}^+)$, by part {\rm (2)} of Theorem \ref{liu-levelset},  the level sets $\Sigma_\ell$ will connect $(\rho_\ell,0)$ and approach asymptotically to the vertical line $\rho=\underline{\rho}_\ell$, which are illustrated by
    curves in  unbounded %shaded
    areas \textcircled{3} in the left figure of Fig. \ref{liufig2}

 \smallskip
\item[\textcircled{4}]
For $\underline{C}^+<C_*^+$ and $\ell\in (\underline{C}^+,C_*^+)$, the level sets $\Sigma_\ell$ are illustrated by
    curves in  unbounded
    areas \textcircled{4} in the right figure of Fig. \ref{liufig2}. % in Fig. \ref{liufig2}.
   In particular,
$\Sigma_\ell$ will connect $(0, \underline{h}^{-1}(\ell))$ and approach asymptotically to the horizontal line  $\omega=\underline{h}^{-1}(\ell)$ as proved by part {\rm (3)} of Theorem \ref{liu-levelset}.
\smallskip

 \item[\textcircled{5}]
For $\ell\in (\max\{C_*^+,\underline{C}^+\},\overline{C})$,
   part (4) of Theorem \ref{liu-levelset}
implies the level sets $\Sigma_\ell$ are
given  by unbounded curves
approaching asymptotically to both the vertical line $\rho=\underline{\rho}_\ell$ and the horizontal line  $\omega=\underline{h}^{-1}(\ell)$, see %shaded
areas \textcircled{5} of Fig. \ref{liufig2}.
%\end{itemize}
% For $\underline{C}=C_*<C_*^+=\overline{C}$, part {\rm (3)} yields all level set curves in $\rho$-$\omega$ plane are unbounded and approach
%  asymptotically to $\omega$-axis as in Fig. \ref{liufig2}{\rm (3)}, where $\omega=\omega_\ell(\rho)$ has
% asymptotic behavior $A\sim 1/D$ at $(\omega,\rho)=(0,+\infty)$.
% For
%   $\underline{C}=C_*=C_*^+<\overline{C}$,
%   part (4) implies all level sets
%   are characterized by unbounded curves in $D$-$A$ plane intersecting $D$-axis at $D=\rho_\ell$ and approaching
%   the vertical  asymptotics $D=\underline{\rho}_\ell$ as in Fig. \ref{liufig2}{\rm (1)}. We conjecture that $\lambda(\omega,\rho)$ is monotone increasing in $D$ provided that  $\underline{C}=C_*$.

\end{itemize}

For any $\ell \in (\underline{C},{\overline{C}})$,  the topological structure of level sets $\Sigma_\ell$  is a combination of these five types, which are separated by the level sets $\Sigma_{C_*}$,   $\Sigma_{C_*^+}$ and $\Sigma_{\underline{C}}$. In particular, if $C_*=C_*^+$ and $\underline{C}^+ =\overline{C}$, then the corresponding  structure of level sets is analogue to that of scalar case   in \cite{LL2022} and spatially homogeneous case in \cite{LL2024}.

 We are in a position to prove Theorem \ref{liu-levelset}.

\begin{proof}[Proof of Theorem {\rm \ref{liu-levelset}}]
 % If $\underline{C}=\overline{C}$,  a direct application of Lemma \ref{liuprop-1}{\rm(i)} yields $\lambda(\omega, \rho)\equiv \underline{C}=\overline{C}$ for all $\omega, \rho>0$. It remains to consider the case $\underline{C}<\overline{C}$ and prove parts {\rm(i)}-{\rm(ii)}.

Let %$\underline{\lambda}(\rho)$ and
$\overline{\lambda}(\rho)$ be the principal eigenvalue of problem \eqref{liu-20240227-1}
%defined
in Proposition \ref{TH-liu-20240227},
which serves as the limit of $\lambda(\omega,\rho)$ as %$\omega\to 0$ and
$\omega\to+\infty$. %respectively.
By our assumption, $\widehat{\bf A}(x)$ is a symmetric matrix for all $x\in \Omega$. Hence, by  the variational structure of \eqref{liu-20240227-1},   $\overline{\lambda}(\rho)$ is non-decreasing in $\rho$, and either $\overline{\lambda}'(\rho)>0$ for all $\rho>0$, or $\overline{\lambda}'(\rho)\equiv 0$.  It was shown in \cite[Theorem 1]{D2009} or \cite[Theorem 1.4]{LL2016} %\cite{D2009,LL2016}
that
$\overline{\lambda}(\rho)\to C_*^+$ as $\rho\to 0$, and hence %it holds %\eqref{liu-33} implies
\begin{equation}\label{liu-12-1}
C_*^+ \leq \overline{\lambda}(\rho) \leq \overline{C}\quad \text{for all }\, \rho>0,
\end{equation}
and  $\overline{\lambda}(\rho)$ is strictly increasing in $\rho$ if $C_*^+  <\overline{C}$.
% For each $d>0$, letting $\omega\to+\infty$ in \eqref{liu-5}, we apply  \cite[Theorem 1.3]{LLPZ20191} to \eqref{liu-4} and derive that $\lambda(\infty,d):=\lim\limits_{\omega\to+\infty}\lambda(\omega,\rho)$ defines the principal eigenvalue of \eqref{liu6}, i.e. $\lambda(\infty,d)=\lambda^*(d)$.
% By  the variational structure of \eqref{liu6} (see e.g.  \cite[Chapter 2]{CC2003}), it is easily seen that  $\lambda(\infty,d)$ is non-decreasing in $d$, and either $\lambda'(\infty,d)>0$ for all $d>0$, or $\lambda'(\infty,d)\equiv 0$, and moreover
% \begin{equation}\label{liu-12}
%  \lim_{d\to0} \lambda(\infty,d)=A_* \quad\text{and}\quad\lim_{d\to\infty} \lambda(\infty,d)=\overline{A};
% \end{equation}
%The proof is separated into the following two steps.

\smallskip
 {\it Step 1.} We assume   $\ell\in (\underline{C}, \min\{C_*^+,\underline{C}^+\})$  and prove part {\rm(1)}.
%In this case,
In view of $\underline{C}<\underline{C}^+$, let  $\rho_\ell$ be determined by Lemma \ref{d-ell}. %As in the proof of Proposition \eqref{liuprop-1},
Since $\underline{\lambda}(t, \rho)$ is non-decreasing in $\rho$,  the uniqueness of $\rho_\ell$ implies
\begin{equation}\label{liu-13}
 \underline{\lambda}(\rho)<\underline{\lambda}(\rho_\ell)=\ell \quad \text{for any }\, \rho\in (0,\rho_\ell). %quad \text{and}\quad \lambda(\infty,d)> \lim_{d\to0}\lambda(\infty,d)=A_*.
\end{equation}
This together with  \eqref{liu-12-1} implies that $\underline{\lambda}(\rho)<\ell\leq C_*^+ \leq \overline{\lambda}(\rho)$ for all  $\rho\in (0, \rho_\ell)$.
Applying the monotonicity %of $\lambda(\omega,\rho)$ with respect to $\omega$
established in Theorem \ref{TH1-1}, we find $\lambda(\omega, \rho)$ is strictly increasing in $\omega$ for any $\rho\in (0\rho_\ell)$, %deduce $\partial\lambda(\omega, \rho)/\partial\omega>0$,
and hence by Proposition \ref{TH-liu-20240227}, there is a unique continuous  non-negative function $\omega_\ell(\rho):(0, \rho_\ell)\mapsto [0,\infty)$ such that 	
\begin{equation}\label{liu-20240404-1}
    \lambda(\omega_\ell(\rho),\rho)\equiv \ell, \quad \forall \rho \in (0, \rho_\ell).
\end{equation}
where the continuity follows from the implicit function theorem.
%{\color{red} (Here we need to ensure $\partial\lambda(\omega, d)/\partial\omega>0$ in order to apply the IFT; the uniqueness of $\omega_\ell(\rho)$  also needs the strict monotonicity)}
%In view of $\int_0^1\underline{\lambda}(t,\rho_\ell){\rm d}t=\ell$ and  $\underline{\lambda}(\rho)<\ell$ for $\rho\in (0, \rho_\ell)$ as in
By \eqref{liu-13}, it is easily seen that $\omega_{\ell}(\rho_\ell)=0$ and  $\omega_{\ell}(\rho_\ell)>0$ for $\rho\in (0, \rho_\ell)$. We shall prove part {\rm(1)} by
considering the following two cases:

Case 1: We assume $\ell\in (\underline{C}, C_*]$ %or $\ell=C_*<C_*^+$
to  establish part {\rm(1)}-{\rm(i)}. We first claim $\omega_\ell(\rho)\to 0$ as $\rho\to 0$. If not, then there exists some sequence $\{\rho_n\}_{n\geq 1}$ such that $\rho_n\to 0$ and $\omega_\ell(\rho_n)\to \omega_*$ as $n\to +\infty$ for some $\omega_*\in(0,+\infty]$. Then we shall reach a contradiction by the following two cases:
{\rm(i)} If $\omega_*=+\infty$, then a direct application of Lemma \ref{liu-20240516} yields $\lambda(\omega_\ell(\rho_n),\rho_n)\to C_*^+$
as $n\to +\infty$. Note from \eqref{liu-20240404-1} that  %$\omega_\ell$ implies that
$\lambda(\omega_\ell(\rho_n),\rho_n)\equiv \ell$
for all $n$, so that $\ell=C_*^+$, which is a contradiction since $\ell< \min\{C_*^+,\underline{C}^+)$ %or $\ell\leq C_*<C_*^+$
by our assumption.
 {\rm(ii)} If $\omega_*<+\infty$, it follows by Theorem \ref{Bai-He-2020} that $\lambda(\omega_\ell(\rho_n),\rho_n)\to \underline{h}(\omega_*)$
as $n\to+\infty$. %The definition of $\omega_\ell$ implies that
%Noting from \eqref{liu-20240404-1} that
Since $\lambda(\omega_\ell(\rho_n),\rho_n)\equiv \ell$ for all $n$,
one has $\ell=\underline{h}(\omega_*)\leq C_*$. Due to the symmetry of ${\bf A}$, we can apply the monotonicity result in \cite[Theorem 1.1]{LLS2022} to \eqref{liu-20240317-1} and obtain that either $\underline{h}(\omega)$ is strictly increasing in $\omega$, or  $\underline{h}(\omega)$ is constant in $\omega$. Due to $\underline{h}(\omega)\to C_*$ as $\omega\to 0$ (by applying \cite[Theorem 2.1]{LLS2022}), it holds $\underline{h}(\omega_*)= C_*$, and
furthermore  $\underline{h}(\omega)\equiv C_*$ for all $\omega>0$. By $\ell=\underline{h}(\omega_*)$ one has $\ell=C_*<C_*+$ by our assumption. In particular, according to the fact that $\underline{h}(\omega)\to C_*^+$ as $\omega\to +\infty$, we derive $C_*=C_*^+$, which is a contradiction.  Therefore, $\omega_\ell(\rho)\to 0$ as $\rho\to 0$.

% The monotonicity of ${\omega_\ell(\rho)}/{\sqrt{d}}$ proved in {\bf (1)} implies particularly that
% ${\omega_\ell(\rho)}/{\sqrt{d}}
% \geq {\omega_A(\rho_\ell/2)}/{\sqrt{\rho_\ell/2}}$ for all $\rho\in (0, \rho_\ell/2]$,
% i.e.

It remains to prove that for the case $\ell\in(\underline{C},C_*)$, it holds $c_1{\sqrt{\rho}}\leq {\omega_\ell(\rho)}
\leq c_2 \sqrt{\rho}$ for some constants $0<c_1<c_2$ independent of $\rho$. We first claim ${\omega_\ell(\rho)}
\geq c_1{\sqrt{\rho}}$  holds for some constant $c_1>0$.
%which is independent of $\rho$.
If not,  then there exists some sequence $\{\underline{\rho}_n\}_{n=1}^\infty$ such that $\omega_\ell(\underline{\rho}_n)/\sqrt{\underline{\rho}_n}\to 0$ and $\underline{\rho}_n\to 0$ as $n\to+\infty$. Then a direct application of Theorem \ref{TH-liu-20240106} yields $\lambda(\omega_\ell(\underline{\rho}_n), \underline{\rho}_n)\to \underline{C}$ as $n\to+\infty$, so that $\ell=\underline{C}$, which is a contradiction.  We next show $\omega_\ell(\rho)\leq c_2 \sqrt{\rho}$ for some $c_2>0$ independent of $\rho$.
Suppose on the contrary that  there exists some sequence $\{\rho_n\}_{n=1}^\infty$ such that $\overline{\rho}_n\to 0$ and $\omega_\ell(\overline{\rho}_n)/\sqrt{\overline{\rho}_n}\to+\infty$  as $n\to+\infty$. Then
Theorem \ref{TH-liu-20240106} implies $\lambda(\omega_\ell(\overline{\rho}_n), \overline{\rho}_n)\to C_*$ as $n\to+\infty$, which contradicts $\lambda(\omega_\ell(\overline{\rho}_n), \overline{\rho}_n)\equiv \ell< C_*$. %The  claim thus holds.
Therefore, %for all $\rho\in (0, \rho_A/2]$, there holds
$0<c_1\leq \omega_\ell(\rho)/\sqrt{\rho}\leq c_2$.
%so that by the monotoicity, the limit of $\omega_\ell(\rho)/\sqrt{\rho}$ as $\rho\to 0$ exists and is positive.	In particular, we have
	%$\omega_\ell(\rho)\to 0$ as $\rho\to 0$.
 Part {\rm(1)}-{\rm(i)} is thus proved. %Step 1 is complete.

Case 2: We assume $\ell\in ( C_*,\min\{C_*^+,\underline{C}^+\})$  to  establish part {\rm(1)}-{\rm(ii)}. Due to $C_*<C_*^+$, %as stated in Case 1,
we apply \cite[Theorem 1.1]{LLS2022} to problem \eqref{liu-20240317-1} and derive that $\underline{h}(\omega)$ is strictly increasing in $\omega$, with   $\underline{h}(\omega)$ being  defined as the limit of $\lambda(\omega,\rho)$ as $\rho \to 0$, see Theorem \ref{Bai-He-2020}. Hence, there is the unique $\omega=\underline{h}^{-1}(\ell)$ such that $\underline{h}(\omega)=\ell$. Fix any sequence  $\{\rho_n\}_{n=1}^\infty$
such that $\rho_n\to 0$ as $n\to+\infty$. If
$\omega_{\ell}(\rho_n)\to \omega_*$ for some $\omega_*\in [0,+\infty]$, then due to $\ell\in ( C_*, C_*^+)$, %we can claim $\omega_*\in (0,+\infty)$.
by Theorem \ref{TH-liu-20240106} and Lemma \ref{liu-20240516} we have $\omega_*\in (0,+\infty)$. %Indeed, if $\omega_*$, Theorem \ref{TH-liu-20240106} implies
% $\ell=\lambda(\omega_\ell(\rho_n),\rho_n)$
Hence, Theorem \ref{Bai-He-2020} implies $\lambda(\omega_\ell(\rho_n),\rho_n)\to \underline{h}(\omega_*)$ as $n\to+\infty$, which together with \eqref{liu-20240404-1} implies $\ell=\underline{h}(\omega_*)$, and thus $\omega_*=\underline{h}^{-1}(\ell)$. This proves part {\rm(1)}-{\rm(ii)}.

\smallskip
 {\it Step 2.} We assume  $\ell\in (C_*^+,\underline{C}^+)$ and $C_*^+<\underline{C}^+$ to  prove part {\rm(2)}.
In view of $\underline{C}\leq C_*<\underline{C}^+$, let
$\rho_\ell>0$ be given by Lemma \ref{d-ell}.
% $\overline{\lambda}(\rho)$ is strictly increasing in $\rho$, and thus by \eqref{liu-12-1} there is a unique $\underline{\rho}_\ell\in(0,\infty)$ such that $\overline{\lambda}(\underline{\rho}_\ell)=\ell$.
Note that $\underline{\lambda}(\rho_\ell)=\ell$ as given in Lemma \ref{d-ell}. Let ${\bm \phi}>0$ be the  principal eigenfunction of %problem
\eqref{liu-20240227-1} with $\rho=\rho_\ell$. If ${\bf A}{\bm \phi}=\widehat{\bf A}(x){\bm\phi}+g(t){\bm\phi}$ for some $1$-periodic function $g$,
% If $\gamma-\hat{\gamma}-A (\beta-\hat{\beta})$ %$\gamma- A \beta-\hat{\gamma}+A\hat{\beta}$
% is independent of $x$,  by \eqref{liu-8} and \eqref{liu-8-1} we can verify that $A$ is the principal eigenvalue of \eqref{liu6} with $d=\rho_\ell$, so that
Theorem \ref{TH1}{\rm(ii)} yields $\lambda(\omega,\rho_\ell)\equiv \ell$ for all $\omega>0$; Otherwise, $\lambda(\omega,\rho_\ell)$ is strictly increasing in $\omega>0$, so that $\overline{\lambda}(\rho_\ell)>\underline{\lambda}(\rho_\ell)= \ell$, which implies
 $\underline{\rho}_\ell< \rho_\ell$. For each $\rho\in (\underline{\rho}_\ell, \rho_\ell)$,  by \eqref{liu-13} and the monotonicity of $\overline{\lambda}(\rho)$,  it holds that $\underline{\lambda}(\rho)< \overline{\lambda}(\underline{\rho}_\ell)<\overline{\lambda}(\rho)$. Hence, by  the monotonicity of $\omega\mapsto\lambda(\omega,\rho)$ in Theorem \ref{TH1-1}, there is a unique continuous  non-negative function $\omega_\ell(\rho):(\underline{\rho}_\ell, \rho_\ell]\to [0,\infty)$ such that	
 \begin{equation}\label{liu-20240404-3}
    \lambda(\omega_\ell(\rho),\rho)\equiv \ell, \quad \forall \rho \in (\underline{\rho}_\ell, \rho_\ell).
\end{equation}
%$\lambda(\omega_\ell(\rho),\rho)\equiv \ell$.
Clearly,  $\omega_{\ell}(\rho_\ell)=0$ and  $\omega_{\ell}(d)>0$ for $\rho\in (\underline{\rho}_\ell, \rho_\ell)$.
To part {\rm(2)}, it remains to show $\omega_{\ell}(\rho)\to+\infty$ as $\rho\searrow  \underline{\rho}_\ell$.
If not, then there exist some $\omega_*\geq 0$ and sequence $\{\rho_n\}_{n=1}^{\infty}$ such that $\rho_n \searrow \underline{\rho}_\ell$ and $\omega_{A}(\rho_n)\to \omega_*$    as $n\to+\infty$.
By \eqref{liu-20240404-3}, one has $\lambda(\omega_*,\underline{\rho}_\ell)=\ell$, and thus  $\overline{\lambda}(\underline{\rho}_\ell)=\lambda(\omega_*,\underline{\rho}_\ell)$. Applying Theorem \ref{TH1-1} once again yields  $\lambda(\omega, \underline{\rho}_\ell)\equiv \ell$ for all $\omega>0$, which contradicts $\underline{\lambda}(\underline{\rho}_\ell)< \ell$ as stated in \eqref{liu-13}.  This proves part {\rm(2)}. %-{\rm{(a)}}. %Then
%Part {\rm(ii)}-\rm{(b)}  can be proved by the similar arguments as in {\bf (1)}.

\smallskip
 {\it Step 3.} We assume  $\ell\in (\underline{C}^+,C_*^+)$ and $\underline{C}^+<C_*^+$ to  prove part {\rm(3)}. We observe that
$$\underline{\lambda}(\rho)\leq \underline{C}^+<\ell<C_*^+\leq \overline{\lambda}(\rho), \quad \forall \rho>0.$$
Hence, Theorem \ref{TH1-1} implies that $\lambda(\omega,\rho)$ is strictly increasing in $\omega$, and
there is a unique continuous  function $\omega_\ell(\rho):(0, +\infty)\to [0,\infty)$ such that	
$\lambda(\omega_\ell(\rho),\rho)\equiv \ell$ for all $\rho>0$.

Let $\underline{h}(\omega)$ be defined in Theorem \ref{Bai-He-2020}, serving as the limit of $\lambda(\omega,\rho)$ as $\rho\to 0$. Note that
\begin{equation}\label{liu-20240525-2}
   \lim_{\omega\to 0}\underline{h}(\omega)= C_*\leq \underline{C}^+<\ell<C_*^+=\lim_{\omega\to +\infty}\underline{h}(\omega).
\end{equation}
In view of $C_*\leq \underline{C}^+<C_*^+$, as in Step 1,  we may apply \cite[Theorem 1.1]{LLS2022} %to problem \eqref{liu-20240317-1} and
to derive that $\underline{h}(\omega)$ is strictly increasing in $\omega$. This  together with \eqref{liu-20240525-2} yields $\underline{h}^{-1}(\ell)$ exists. By similar arguments as in Case 2 of Step 1, we find   $\omega_\ell(\rho) \to \underline{h}^{-1}(\ell)$ as $\rho \to 0$.

It remains to show $\omega_\ell(\rho) \to \overline{h}^{-1}(\ell)$ as $\rho \to +\infty$, where $\overline{h}(\omega)$ is the limit of $\lambda(\omega,\rho)$ as $\rho\to+\infty$; see Proposition \ref{TH-liu-20240429}. Similar to \eqref{liu-20240525-2}, we find
\begin{equation*}
   \lim_{\omega\to 0}\overline{h}(\omega)= \underline{C}^+<\ell<C_*^+\leq \overline{C}=\lim_{\omega\to +\infty}\overline{h}(\omega).
\end{equation*}
Hence,  by applying \cite[Theorem 1.1]{LLS2022} to problem \eqref{liu-20240525-3} yields that $\overline{h}(\omega)$ is strictly increasing in $\omega$, so that $\overline{h}^{-1}(\ell)$ exists uniquely. Given any sequence  $\{\rho_n\}_{n=1}^\infty$
such that $\rho_n\to +\infty$ as $n\to+\infty$. Assume
$\omega_{\ell}(\rho_n)\to \omega_*$ for some $\omega_*\in [0,+\infty]$, then due to $\ell\in ( \underline{C}, \overline{C})$, %we can claim $\omega_*\in (0,+\infty)$.
by Theorem \ref{TH-liu-20240106} and Lemma \ref{liu-20240516} we have $\omega_*\in (0,+\infty)$.
Hence, Proposition \ref{TH-liu-20240429} implies $\lambda(\omega_\ell(\rho_n),\rho_n)\to \overline{h}(\omega_*)$ as $n\to+\infty$.
In view of $\lambda(\omega_\ell(\rho),\rho)\equiv \ell$, this implies  $\ell=\overline{h}(\omega_*)$, and thus $\omega_*=\overline{h}^{-1}(\ell)$. This completes Step 3.

\smallskip
\noindent {\it Step 4.} We assume  $ \ell \in (\max\{C_*^+,\underline{C}^+\}, \overline{C})$ and   prove part {\rm(4)}. Since $C_*^+<\overline{C}$, let $\underline{\rho}_\ell>0$ be defined by Lemma \ref{d-ell}.
Due to $\overline{\lambda}(\underline{\rho}_\ell)=\ell$, the monotonicity of $\overline{\lambda}(\rho)$ implies $\overline{\lambda}(\rho)>\ell$ for all $\rho>\underline{\rho}_\ell$, and thus
$\underline{\lambda}(\rho)\leq \underline{C}^+<\ell< \overline{\lambda}(\rho)$ for all $\rho>\underline{\rho}_\ell$. This together with
Theorem \ref{TH1-1} implies that $\lambda(\omega,\rho)$ is increasing in $\omega$, and thus
there is a unique positive continuous  function $\omega_\ell(\rho):(\underline{\rho}_\ell, +\infty)\mapsto [0,\infty)$ such that	
$\lambda(\omega_\ell(\rho),\rho)\equiv \ell$ for all $\rho\in (\underline{\rho}_\ell,+\infty)$.
Then by the same arguments in Steps 2 and 3, we can show
$\omega_{\ell}(\rho)\to+\infty$ as $\rho\to  \underline{\rho}_\ell$ and $\omega_{\ell}(\rho)\to \overline{h}^{-1}(\ell)$ as $\rho\to  +\infty$. Therefore, part (4) is proved.

The proof of Theorem \ref{liu-levelset} is now complete.
\end{proof}

For the temporally constant case, i.e. matrix {\bf A} is independent of $t$,  it can be observed that $\underline{C}=C_*=C_*^+$.  The variational structure of \eqref{Liu1} indicates  that principal eigenvalue is monotone non-increasing
in $\rho$. In contrast, Theorem \ref{liu-levelset} turns out to imply some non-monotone dependence of principal eigenvalue
on %diffusion rate
$\rho$ when $\underline{C}<C_*$.

% Another interesting consequence of
% Theorem \ref{liu-levelset} is the implication on the non-monotone dependence of pricnipal eigenvalue
% on diffusion rate $\rho$. This is in strong contrast to

\begin{corollary}
    \label{liucor-1}
%Let $\lambda(\omega,\rho)$ be the principal eigenvalue of \eqref{Liu1}.
For each $x\in\Omega$ and $\omega>0$, let $h(x,\omega)$ be the principal eigenvalue of
\eqref{liu-20240317-1}.
Assume   $\underline{C}<C_*$, then the followings hold.

\begin{itemize}
\item [{\rm(i)}]  If   $C_*=\overline{C}$,
then for each $\omega>0$, $\lambda(\omega,\rho)$ attains its global minimum exactly
at both $\rho=0$ and $\rho=+\infty$.
\smallskip

\item [{\rm(ii)}] If   $C_*<\overline{C}$ and  for each $\omega>0$, function $h(x,\omega)$ admits finite number of strict local minimal points, all of which are  non-degenerate, i.e. the associated Hessian matrices are positive defined. % (which implies $C_*<C_*^+$).
 Then  for any $\omega>0$,   $\lambda(\omega, \rho)$ attains a local minimum at $\rho=0$,
 %there exist $\eta,\rho_0>0$ such
% that
% % for any %$\omega>\eta$ and
% % $\rho<\rho_0$,  there holds
% \begin{equation*}%\label{eq:lowerbound}
% \lambda(\omega, \rho)>\lambda(\omega, 0)+\eta  \sqrt{\rho}, \quad \forall \rho<\rho_0,
% %\qquad
% %\mbox{for} \quad
% %0<d<d_0(\eta), \ \omega>\eta.
% \end{equation*}
% %$\partial_\rho\lambda(\omega,0)>0$ for all $\omega>0$
and there exists some $\omega_*>0$
such that for each $0<\omega<\omega_*$,
there exist two positive constants
$0<\underline{\rho}<\overline{\rho}$ such that
%the followings hold:
\begin{itemize}
\item [{\rm(a)}]
$\lambda(\omega, \underline{\rho})=\lambda(\omega, \overline{\rho})
=C_*$,
\item [{\rm(b)}]
$\lambda(\omega, \rho)>C_*$ for all
$\rho\in (0, \underline{\rho})\cup (\overline{\rho}, +\infty)$, and
\item [{\rm(c)}]
{$\lambda(\omega, \rho)
< C_*$
for some $\rho\in (\underline{\rho}, \overline{\rho})$.}

\end{itemize}
 In particular, $\lambda(\omega,\rho)$ attains its %global minimum  at $d=\infty$, the
 global minimum in $(\underline{\rho}, \overline{\rho})$.
%and a local (but not global) maximum at $\rho=0$.

%\smallskip

%\item [{\rm(iii)}]  Suppose that $\underline{A}<A_*=\overline{A}$.
%Then for each $\omega>0$, $\lambda(\omega,d_I)$ attains its global minimum exactly
%at both $d_I=0$ and $d_I=\infty$.
%In particular, $\lambda(\omega,\rho)$ is not monotonic in $d$.
\end{itemize}
\end{corollary}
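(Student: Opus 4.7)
The plan is to treat the two parts separately. Part (i) reduces to a limit-coincidence argument combined with Lemma \ref{d-ell}; part (ii) combines a small-$\rho$ expansion with the near-origin asymptotics of Theorem \ref{TH-liu-20240106}.

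For part (i), the hypothesis $C_*=\overline{C}$ together with Lemma \ref{liu-20240526} forces $C_*=C_*^+=\underline{C}^+=\overline{C}$. Since $\underline{h}(\omega)$ is non-decreasing by \cite{LLS2022} with $\underline{h}(0^+)=C_*$ and $\underline{h}(+\infty)=C_*^+=C_*$, we get $\underline{h}\equiv C_*$; similarly $\overline{h}\equiv C_*$. Hence for every $\omega>0$, $\lim_{\rho\to 0^+}\lambda(\omega,\rho)=\lim_{\rho\to+\infty}\lambda(\omega,\rho)=C_*=\overline{C}$, and Lemma \ref{liu-20240516} provides the uniform upper bound $\lambda\le\overline{C}$. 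To rule out attainment at any interior $\rho_0\in(0,+\infty)$, I would argue by contradiction: if $\lambda(\omega_0,\rho_0)=C_*$, then Theorem \ref{TH1-1} together with $\lambda\le C_*$ forces $\lambda(\cdot,\rho_0)\equiv C_*$ on $[\omega_0,+\infty)$, the dichotomy in Theorem \ref{TH1} upgrades this to $\lambda(\cdot,\rho_0)\equiv C_*$ on $(0,+\infty)$, and passing $\omega\to 0^+$ yields $\underline{\lambda}(\rho_0)=C_*$. But Lemma \ref{d-ell}, applied in the range $(\underline{C},\underline{C}^+)=(\underline{C},C_*)$, gives $\underline{\lambda}(\rho)<C_*$ for every finite $\rho>0$, a contradiction.

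For part (ii), I would proceed in three stages. First, under the non-degeneracy of the local minimal points of $x\mapsto h(x,\omega)$, I would establish the small-$\rho$ asymptotic expansion
\[
\lambda(\omega,\rho)=\underline{h}(\omega)+c(\omega)\sqrt{\rho}+o(\sqrt{\rho})\quad\text{as }\rho\to 0^+,
\]
with $c(\omega)>0$, via a WKB/semiclassical ansatz ${\bm\varphi}=e^{-\Phi/\sqrt{\rho}}({\bm u}+\sqrt{\rho}\,{\bm w}+\cdots)$ localized at each argmin of $h(\cdot,\omega)$, where $\Phi$ is quadratic with Hessian determined by that of $h$; the sub-leading order determines $c(\omega)$ as a Gaussian-type eigenvalue, strictly positive thanks to the Hessian nondegeneracy. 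This expansion yields $\lambda(\omega,\rho)>\underline{h}(\omega)\ge C_*$ for small $\rho>0$, giving the claimed local minimum at $\rho=0$. The symmetric statement $\lambda(\omega,\rho)>C_*$ for large $\rho$ follows from $\overline{h}(\omega)>C_*$ for all $\omega>0$: either $\underline{C}^+>C_*$, or $\underline{C}^+=C_*<\overline{C}$ and then \cite{LLS2022} gives strict monotonicity of $\overline{h}$.

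To construct $\omega_*>0$, Theorem \ref{TH-liu-20240106} gives $\lambda(\omega,\rho)\to\underline{C}<C_*$ along any sequence $(\omega_n,\rho_n)\to(0,0)$ with $\omega_n/\sqrt{\rho_n}\to 0$. Fixing such a sequence with $\lambda(\omega_n,\rho_n)<C_*$ and using monotonicity in $\omega$ (Theorem \ref{TH1-1}), one has $\lambda(\omega,\rho_n)<C_*$ for all $\omega\le\omega_n$, so $\omega_*:=\sup\{\omega>0:\inf_{\rho>0}\lambda(\omega,\rho)<C_*\}$ is strictly positive. For $0<\omega<\omega_*$, pick $\rho_\omega$ with $\lambda(\omega,\rho_\omega)<C_*$; continuity of $\rho\mapsto\lambda(\omega,\rho)$ combined with the first stage makes
\[
\underline{\rho}:=\inf\{\rho>0:\lambda(\omega,\rho)\le C_*\},\qquad \overline{\rho}:=\sup\{\rho>0:\lambda(\omega,\rho)\le C_*\}
\]
well-defined in $(0,+\infty)$ with $\underline{\rho}<\rho_\omega<\overline{\rho}$. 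The intermediate value theorem and the definitions deliver (a) and (b), while $\rho_\omega$ itself witnesses (c). The main obstacle is the $\sqrt{\rho}$-expansion above: one must lift scalar semiclassical techniques to the coupled symmetric cooperative system \eqref{Liu1} and extract the strict sign of $c(\omega)$ from the Hessian nondegeneracy. The symmetry of ${\bf A}$ should, however, supply a variational characterization of $\lambda(\omega,\rho)$ that makes the harmonic-oscillator matching around each non-degenerate argmin of $h(\cdot,\omega)$ tractable.
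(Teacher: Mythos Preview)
Your part (i) is essentially the paper's argument: collapse all the limit constants via Lemma \ref{liu-20240526}, use the $\omega$-monotonicity dichotomy of Theorem \ref{TH1} to force $\lambda(\cdot,\rho_0)\equiv C_*$, and contradict the strict increase of $\underline{\lambda}$ (the paper concludes $\underline{C}=\overline{C}$ directly rather than invoking Lemma \ref{d-ell}, but the content is the same).

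For part (ii) the key discrepancy is how you obtain the local minimum at $\rho=0$. You propose a two-term semiclassical expansion $\lambda(\omega,\rho)=\underline{h}(\omega)+c(\omega)\sqrt{\rho}+o(\sqrt{\rho})$ and correctly flag this as the main obstacle: subleading matching for a coupled cooperative system is genuinely delicate, and extracting the strict sign of $c(\omega)$ is nontrivial. The paper sidesteps this entirely. It only needs the one-sided lower bound $\lambda(\omega,\rho)\ge\underline{h}(\omega)+\eta\sqrt{\rho}$ for small $\rho$, and obtains it (Lemma \ref{liulem-4}) by an explicit super-solution
\[
\overline{\bm\varphi}(x,t)={\bm\phi}(x,t)\exp\bigl(-v(x)/\sqrt{\rho}\bigr),
\]
where ${\bm\phi}$ is the positive eigenfunction of \eqref{liu-20240317-1} and $v\in C^2(\overline{\Omega})$ is a scalar function with $\nabla v(x_k)=0$, $\Delta v(x_k)>0$ at each minimizer $x_k$ of $h(\cdot,\omega)$, $\nabla v\cdot\nu>0$ on $\partial\Omega$, and $h(x,\omega)-\underline{h}(\omega)-\overline{d}|\nabla v|^2\ge 0$. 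A direct calculation plus the comparison principle of \cite{BH2020} then gives the bound. This is far lighter than a full WKB matching and uses the nondegeneracy hypothesis in exactly the same place (to construct $v$), so you should replace your expansion by this super-solution argument.

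For (a)--(c) your route differs from the paper's but is sound. The paper works through the level-set curve $\omega_{C_*}(\rho)$ of Theorem \ref{liu-levelset}: it sets $\omega_*=\max_{[0,\rho_{C_*}]}\omega_{C_*}$ and, for $\omega<\omega_*$, takes $\underline{\rho},\overline{\rho}$ as the first and last $\rho$ at which $\omega_{C_*}(\rho)=\omega$; monotonicity in $\omega$ then yields (b) and (c). Your direct approach---using Theorem \ref{TH-liu-20240106} to produce a point with $\lambda<C_*$, then taking $\underline{\rho},\overline{\rho}$ as the endpoints of $\{\rho:\lambda(\omega,\rho)\le C_*\}$ via the intermediate value theorem---is a legitimate and somewhat more elementary alternative that bypasses the level-set classification; its only extra ingredient is the inequality $\overline{h}(\omega)>C_*$ to ensure $\overline{\rho}<\infty$, which you handle correctly via Lemma \ref{liu-20240526} and the dichotomy in \cite{LLS2022}.
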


%Illustrations of Corollary  \ref{liucor-1} is  given in Fig. \ref{liufig3}. %{\color{blue}Thanks! I will add the corresponding picture here and modify Fig. 2.}
%Among other things,
Corollary \ref{liucor-1} implies that
%(i.e. cases (ii)-(iii)).
% For temporally constant case  (an example for the case $\underline{C}=C_*$),  principal eigenvalue $\lambda$ is monotone increasing in $\rho$.
%as proved in \cite[Theorem 2]{ABLN2008};
%see Fig. \ref{liufig3}(a).
%This implies that increasing the movement of infected populations will decrease $\lambda$,
%i.e. the disease outbreak becomes more unlikely if the infected populations move faster,
%which seems to be against
%our common sense.
for temporally varying environment, the
principal eigenvalue $\lambda(\omega,\rho)$ may not
depend on %diffusion rate
diffusion rate $\rho$
monotonically, which is illustrated in Fig. \ref{liufig3}. It could occur that $\lambda(\omega,\rho)$ is monotone decreasing for some ranges of
$\rho$, and the global minimum of $\lambda(\omega,\rho)$ is attained at some intermediate value of $\rho$. %; see Fig. \ref{liufig3}.
 This suggests the effect of diffusion rate  on the principal eigenvalue of \eqref{Liu1}
can be rather complicate in contrast  to  temporally constant case.
%environment %(i.e. case (i))
%that including seasonality in disease models  could be a better fit to reality in some  situations.
% Moreover,
% Theorem  \ref{liucor-1}(ii) suggests some
% subtle difference between small and large
% frequency: In contrast to
% the small frequency case as shown in Fig. \ref{liufig3}(b),
% we conjecture that if $A_*<\overline{A}$,
% there exists some $\omega^*>0$ large such that
%  for each fixed $\omega>\omega^*$, $\lambda(\omega, d)$
% is monotone decreasing in $d$; i.e.
% the chance of disease outbreak in
% environments with large temporal frequency
% is similar to that in temporally constant environments.

\begin{figure}[htb]
  \centering
  \includegraphics[width=0.95\linewidth]{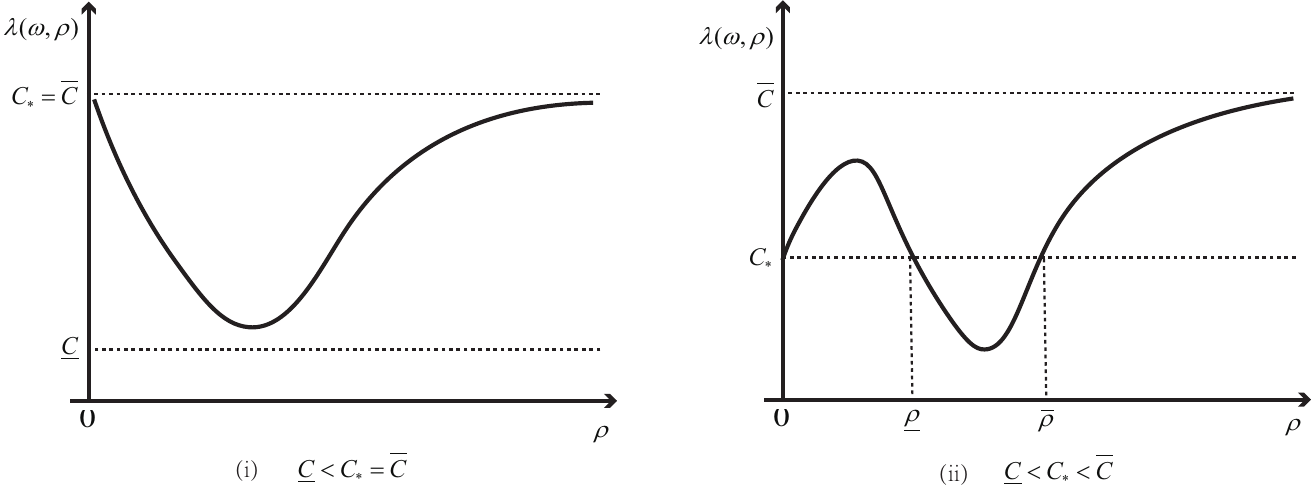}
  \caption{\small
  {%The possible graphs
  Graphs of $\lambda(\omega,\rho)$
  as functions of $\rho$, with fixed $\omega$, for the cases given in Theorem  \ref{liucor-1}.
  These figures are for illustration purpose only, as the exact shapes of the graphs could be more complex.
  % Under the conditions of Theorem  \ref{liucor-1-a}
  % or for the temporally constant case, the graph of $\mathcal{R}_0$ is monotone decreasing from  $1/A_*$ to $ 1/\overline{A}$ as illustrated by Fig. \ref{liufig3}(a).
  %, which was first proved by \cite[Theorem 2]{ABLN2008}.
  %We conjecture the monotonicity remains true for more general case $\underline{A}=A_*<\overline{A}$.
% For $\underline{C}<C_*=\overline{C}$, the graph of $\lambda$ is not monotonic either,
% %and
% %it attains the global maximum at some positive $d_I$,
% as illustrated by Fig. \ref{liufig3}(c).
  }
%{\color{red} It is difficult to tell the difference between black and blue colors.}
}
\label{liufig3}
 \end{figure}
%\section{Dependence on the diffusion rate}
In what follows,
%as an application of %the characterization of level sets in
%Theorem \ref{liu-levelset},
 we are devoted into proving %concerned of the dependence of principal eigenvalue on diffusion rate and prove
 Corollary \ref{liucor-1}. %  in this section.
 %To this end,
We first establish the following lower bound of $\lambda(\omega, \rho)$,
which  implies that for each fixed $\omega>0$, $\lambda(\omega, \rho)$ attains a local minimum at $\rho=0$.

\begin{lemma}\label{liulem-4}
For any $\omega>0$ and $x\in\overline{\Omega}$, let $h(x, \omega)$ be the principal eigenvalue of  problem \eqref{liu-20240317-1}. Suppose the assumptions in  Corollary {\rm \ref{liucor-1}}-{\rm(ii)} hold. %Assume $h(\cdot,\omega)\in C^{2}(\Omega)$ and there exist finite number of strict local minimal points in $\Omega$, all of which are  non-degenerate (the associated Hessian matrices are positive defined).
Then for any $\omega>0$,   there exist some $\eta,\rho_0>0$ independent of $\rho>0$ such
that
% for any %$\omega>\eta$ and
% $\rho<\rho_0$,  there holds
\begin{equation}\label{eq:lowerbound}
\lambda(\omega, \rho)>\underline{h}(\omega)+\eta  \sqrt{\rho}, \quad \forall \rho<\rho_0,
%\qquad
%\mbox{for} \quad
%0<d<d_0(\eta), \ \omega>\eta.
\end{equation}
where $\underline{h}(\omega)=\min_{x\in\overline{\Omega}}h(x,\omega)$ is the limit of $\lambda(\omega,\rho)$ as $\rho\to 0$ given by Theorem {\rm \ref{Bai-He-2020}}.
\end{lemma}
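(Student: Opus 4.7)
My plan is to construct an explicit positive super-solution of \eqref{Liu1} with spectral value $\underline{h}(\omega)+\eta\sqrt{\rho}$ and then invoke the comparison principle in \cite[Sect.~2]{BH2020}, following the WKB-type strategy used in the scalar case of \cite{LL2022} and in the commented-out symmetric-mutation lemma above. Take the ansatz
$$\overline{\bm\varphi}(x,t) := \bm\phi(x,t)\exp\!\bigl(-v(x)/\sqrt{\rho}\bigr),$$
where, for each parameter $x\in\overline\Omega$, $\bm\phi(x,\cdot)=(\phi_1(x,\cdot),\ldots,\phi_n(x,\cdot))>0$ is the principal eigenvector of the ODE system \eqref{liu-20240317-1} associated with $h(x,\omega)$, and $v\in C^2(\overline\Omega)$ is a test function to be chosen. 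Using the defining identity $\omega\partial_t\phi_i-\sum_j a_{ij}(x,t)\phi_j=h(x,\omega)\phi_i$, a direct calculation gives, for every $1\le i\le n$,
$$\omega\partial_t\overline\varphi_i-\rho d_i\Delta\overline\varphi_i-\sum_{j=1}^n a_{ij}\overline\varphi_j \;=\; \mathcal{L}_i(x,t;\rho)\,\overline\varphi_i,$$
with
$$\mathcal{L}_i=h(x,\omega)-d_i|\nabla v|^2+\sqrt{\rho}\,d_i\Delta v+2\sqrt{\rho}\,d_i\,\nabla v\cdot\nabla\log\phi_i-\rho d_i\frac{\Delta\phi_i}{\phi_i}.$$
Hence it suffices to choose $v$ so that $\mathcal{L}_i\ge\underline h(\omega)+\eta\sqrt\rho$ uniformly in $(x,t,i)$ and so that $\nabla v\cdot\nu$ has the appropriate sign on $\partial\Omega$ needed for $\overline{\bm\varphi}$ to be an admissible super-solution under the Neumann boundary condition.

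\textbf{Design of $v$ via non-degeneracy.} Let $\{x_1,\ldots,x_N\}$ be the (finitely many) global minima of $h(\cdot,\omega)$. By the non-degeneracy hypothesis, there exist $\delta,\kappa,\eta_0>0$ such that
$$h(x,\omega)-\underline h(\omega)\ge \kappa|x-x_k|^2 \quad \text{on } B_\delta(x_k),\qquad h(x,\omega)-\underline h(\omega)\ge 2\eta_0 \quad \text{on } \overline\Omega\setminus{\textstyle\bigcup_k} B_\delta(x_k).$$
I would take $v$ to coincide with $\tfrac{a}{2}|x-x_k|^2$ on a smaller neighborhood of each $x_k$, smoothly glued outside so that $\nabla v\cdot\nu$ has the sign on $\partial\Omega$ required by the comparison principle, with $a>0$ satisfying
$$\frac{\eta}{n\underline d}<a<\sqrt{\kappa/\overline d},\qquad \underline d=\min_i d_i,\ \ \overline d=\max_i d_i;$$
this window is non-empty for all $\eta>0$ sufficiently small. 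Inside each $B_\delta(x_k)$, since $|\nabla v|^2=a^2|x-x_k|^2$ and $\Delta v(x_k)=na$,
$$\mathcal{L}_i\ge \underline h(\omega)+(\kappa-\overline d\,a^2)|x-x_k|^2+\underline d\,na\sqrt\rho+O\!\bigl(\sqrt\rho|x-x_k|\bigr)+O(\rho).$$
A Young-type bound $O(\sqrt\rho|x-x_k|)\le\tfrac12(\kappa-\overline d a^2)|x-x_k|^2+C\rho$ absorbs the cross term, leaving $\mathcal{L}_i\ge\underline h(\omega)+\tfrac12\underline d\,na\sqrt\rho$ once $\rho$ is small. Outside $\bigcup_k B_\delta(x_k)$, the gap $2\eta_0$ dominates the $O(\sqrt\rho)$ perturbation. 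Thus $\mathcal{L}_i\ge\underline h(\omega)+\eta\sqrt\rho$ throughout $\overline\Omega\times\mathbb{R}$ for all $\rho<\rho_0$, with $\eta:=\tfrac12\underline d\,na$; periodicity is inherited from $\bm\phi$ and the $t$-independence of $v$, and the comparison principle \cite[Sect.~2]{BH2020} delivers \eqref{eq:lowerbound}.

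\textbf{Main obstacle.} The delicate point is reconciling the two competing requirements on $a$: smallness, so that the quadratic gain $\kappa|x-x_k|^2$ from non-degeneracy absorbs both the penalty $\overline d\,a^2|x-x_k|^2$ and the cross term $2\sqrt\rho\,d_i\nabla v\cdot\nabla\log\phi_i$; and largeness, so that the first-order gain $\underline d\,na\sqrt\rho$ produces strictly positive slack of order $\sqrt\rho$. The non-degeneracy hypothesis is precisely what makes the window for $a$ non-empty; without it only a slower rate (e.g.\ $o(1)$) could be extracted from the same ansatz. A secondary technical issue is the regularity in $x$ of the eigenvector $\bm\phi(x,\cdot)$, which enters $\nabla\log\phi_i$ and $\Delta\phi_i/\phi_i$; given only continuity of ${\bf A}$ in $x$, this is handled by smoothing ${\bf A}$ in $x$, running the argument for the mollification, and passing to the limit using monotonicity/continuity of the principal eigenvalue with respect to the coefficients.
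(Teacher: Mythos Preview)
Your proposal is correct and follows essentially the same route as the paper: the identical WKB ansatz $\overline{\bm\varphi}=\bm\phi\,e^{-v/\sqrt\rho}$ with $\bm\phi$ the ODE eigenvector of \eqref{liu-20240317-1}, the same near/far decomposition around the minima of $h(\cdot,\omega)$, and the same appeal to the comparison principle of \cite[Sect.~2]{BH2020}. The only cosmetic differences are that you write $v=\tfrac{a}{2}|x-x_k|^2$ explicitly and absorb the cross term $2\sqrt\rho\,d_i\nabla v\cdot\nabla\log\phi_i$ via Young's inequality, whereas the paper asserts abstractly that a suitable $v$ exists with $h(x,\omega)-\underline h(\omega)-\overline d|\nabla v|^2>0$ off the minima and kills the cross term by shrinking $\delta$ (using $\nabla v(x_k)=0$); just make sure your smooth gluing keeps $\overline d|\nabla v|^2<2\eta_0$ outside $\bigcup_k B_\delta(x_k)$, or else the ``gap dominates'' step fails. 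Your remark on the $x$-regularity of $\bm\phi$ is a point the paper leaves implicit.
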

\begin{proof}
Fix any $\omega>0$ throughout the proof. By our assumption,
 let $x_k\in \Omega$ ($1\leq k\leq N$) be the local minimal point such that  $h(x_k,\omega)=\underline{h}(\omega)$ and $h(x,\omega)>\underline{h}(\omega)$
 %$\hat{c}(x)>\hat{c}(x_1)=\hat{c}(x_2)=\ldots=\hat{c}(x_N)$
 for all $x\in \overline\Omega\setminus\{x_1,\cdots, x_N\}$, as well as  each point
 $x_k$ is a non-degenerate critical point.
%By  assumption {\rm(H)}, % and the definition of $A_*$, %there exists some function
%For each $\omega>0$,
Hence, we can choose $v\in C^2(\overline\Omega)$ such that for every $1\leq k\leq N$,
$$\nabla v(x_k)=0, \quad\Delta v(x_k)>0, \quad \nabla v\cdot\nu>0 \text{ on } \partial\Omega,$$ and
\begin{equation}\label{eq:liu-temp_1-1}
   h(x,\omega)-\underline{h}(\omega)-\overline{d}|\nabla v|^2
    =\left\{
    \begin{array}{ll}
    \smallskip
        +,
     &x\not=x_k,\quad k=1,\cdots, N,\\
    0,   &x=x_k,\quad k=1,\cdots, N,
    \end{array} \right.
    \end{equation}
    where $\overline{d}=\max\{d_1,\cdots,d_n\}$.
%Without loss of generality,
%assume $\omega=1$.
Denote by ${\bm \phi}=(\phi_1,\cdots,\phi_n)>0$ the principal eigenvector of problem \eqref{liu-20240317-1}   corresponding to principal eigenvalue $h(x,\omega)$. %For $(x,t)\in \overline\Omega\times \mathbb{R}$, we
We define
$$
\overline{\bm \varphi}(x,t):
={\bm \phi} (x,t)\exp\left\{-\frac{v(x)}{\sqrt{\rho}}\right\}, \quad \forall (x,t)\in \overline\Omega\times \mathbb{R}.
$$
Next, we will claim there exist  constants $\eta>0$ and $\rho_0>0$ such that for all $\rho<\rho_0$,  %there holds
\begin{equation}\label{liu-50-1}
\begin{cases}
\begin{array}{ll}
\smallskip
\omega\partial_{t}\overline{\bm \varphi}-\rho {\bf D}\Delta\overline{\bm \varphi}-{\bf A}\overline{\bm \varphi}\geq (\underline{h}(\omega)+\eta \sqrt{\rho})\overline{\bm \varphi}  &\text{in }\,\,\Omega\times \mathbb{R},\\
\smallskip
\nabla \overline{\bm \varphi}\cdot\nu\geq 0 &\text{on }\,\partial\Omega\times \mathbb{R},\\
\overline{\bm \varphi}(x,t)=\overline{\bm\varphi}(x, t+1) &\text{in }\,\,\Omega\times\mathbb{R}.
\end{array}
\end{cases}
\end{equation}
%Since matrix $A$ is irreducible,
Then \eqref{eq:lowerbound} is a direct consequence of the comparison principle %and Hopf boundary lemma; see also
established in \cite[Sect. 2]{BH2020}.

By \eqref{liu-20240317-1}, direct calculations yield %that for all $(x,t)\in \Omega\times \mathbb{R}$,
% \begin{equation}\label{liu-47-1}
% \begin{split}
\begin{align}\label{liu-20240602-2}
    &\omega\partial_t \overline{\varphi}_i-\rho d_i\Delta \overline{\varphi}_i
-\sum_{j=1}^n a_{ij}\overline{\varphi}_j-\underline{h}(\omega)\overline{\varphi}_i\notag\\
=&\omega(\partial_t \log \phi_i)\overline{\varphi}_i-\rho d_i \frac{\Delta \phi_i}{\phi_i}\overline{\varphi}_i +2\sqrt{\rho}d_i (\nabla\log \phi_i\cdot \nabla v) \overline{\varphi}_i+\sqrt{\rho} d_i \overline{\varphi}_i \Delta v \notag\\
&- d_i |\nabla v|^2  \overline{\varphi}_i-\overline{\varphi}_i\sum_{j=1}^n a_{ij}\frac{\phi_j}{\phi_i}-\underline{h}(\omega)\overline{\varphi}_i\\
=& \left[h(x,\omega)-\underline{h}(\omega)-d_i|\nabla v|^2\right]\overline{\varphi}_i\notag
\\
&+\sqrt{\rho}d_i\left[\Delta v+2\nabla\log \phi_i\cdot \nabla v-\sqrt{\rho} \frac{\Delta \phi_i}{\phi_i}\right]\overline{\varphi}_i, \quad \forall (x,t)\in \Omega\times \mathbb{R}.\notag
% \end{split}
% \end{equation}
\end{align}

By continuity, there exists some $\delta>0$
such that $\Delta v(x)\ge \Delta v(x_k)/2$
for $x\in B_\delta(x_k)$, $k=1,\cdots, N$. %For any $1\leq k\leq N$, in
In view of $\nabla v(x_k)=0$, we may further choose $\delta$ small if necessary such that $|\nabla\log \phi_i\cdot \nabla v|\leq \Delta v(x_k)/4$
for all $x\in B_\delta(x_k)$. Then by \eqref{eq:liu-temp_1-1} and \eqref{liu-20240602-2}, there exists some $\rho_1>0$ small such that  whenever $\rho\leq \rho_1$, it holds that
for all $1\leq k\leq N$, % and $1\leq i \leq n$,
% Choose some  constant $d_1>0$ such that $|\sqrt{d} O(1)|\leq \eta^2\Delta v(x_i)/4$  for all $d<d_1$ and $1\leq i\leq N$.
% This together with \eqref{liu-47} implies that
% for $d<d_1$, $\omega>\eta$, and  $1\leq i\leq N$, %it holds that
\begin{equation}\label{liu-48}
  \omega\partial_t \overline{\bm\varphi}-\rho {\bf D}\Delta \overline{\bm\varphi}
-{\bf A}\overline{\bm\varphi}%\sum_{j=1}^n a_{ij}\overline{\bm\varphi}_j
-\underline{h}(\omega)\overline{\bm\varphi}
\geq \frac{\Delta v(x_k)}{4} \underline{d} \sqrt{\rho}\overline{\bm\varphi}
\quad \text{for}\,\,(x,t)\in B_\delta(x_k)\times \mathbb{R}.
\end{equation}

By \eqref{eq:liu-temp_1-1},  there exists some constant $\eta=\eta(\delta)>0$ such that
$$
 h(x,\omega)-\underline{h}(\omega)-\overline{d}|\nabla v|^2 \geq \eta, \quad \forall x\not\in B_\delta(x_k),\,\,k=1,\cdots, N.$$
 %for all $x\not\in B_\delta(x_k)$ and $k=1,\cdots, N$.
Hence, by \eqref{liu-20240602-2} we may choose some constant  $\rho_2>0$ such that for all $\rho<\rho_2$ and  $1\leq k\leq N$,
\begin{equation}\label{eq:liu-temp22}
\omega\partial_t \overline{\bm\varphi}-\rho {\bf D}\Delta \overline{\bm\varphi}
-{\bf A}\overline{\bm\varphi}%\sum_{j=1}^n a_{ij}\overline{\bm\varphi}_j
-\underline{h}(\omega)\overline{\bm\varphi}
\geq \frac{\eta}{2}\overline{\bm\varphi} \quad  \text{for }\,\, x\not\in B_\delta(x_k), \ t\in \mathbb{R}. % \ i=1,\cdots,n.
\end{equation}

Combining with \eqref{liu-48} and \eqref{eq:liu-temp22}, we have verified that the first inequality in \eqref{liu-50-1} holds with $\rho\leq \min\{\rho_1,\rho_2\}$.
 For $x\in\partial\Omega$ and
$t\in \mathbb{R}$, observe that
\begin{equation*}%\label{eq:liu-temp3}
% \frac{\partial \overline{\varphi}}{\partial n}
% =\varphi \left[ O(1)-\frac{1}{\sqrt{d}}
% \frac{\partial w}{\partial n}\right]
% >0,
\nabla \overline{\bm\varphi}\cdot\nu=\overline{\bm\varphi} \left[ \nabla\log {\bm\phi}\cdot \nu-\frac{1}{\sqrt{\rho}}
\nabla v\cdot\nu\right].
\end{equation*}
Due to $\nabla v\cdot\nu>0$, the boundary condition in \eqref{liu-50-1} can be verified by
choosing $\rho$ small.
% where the last inequality follows from
% $\frac{\partial w}{\partial n}<0$ on $\partial\Omega$ and small $d_4$.
 %Finally, by definition it is easily seen that $\overline{\bm\varphi}(x,t)=\overline{\bm\varphi}(x,t+1)$ in $\Omega\times\mathbb{R}$.
 Therefore, \eqref{liu-50-1} holds and Lemma \ref{liulem-4} is proved.
% Finally, \eqref{eq:lowerbound} follows from
% \eqref{eq:liu-temp22},
% \eqref{eq:liu-temp3} and a comparison result.
\end{proof}

We conclude this section by proving Corollary {\rm \ref{liucor-1}}.
\begin{proof}[Proof of Theorem {\rm \ref{liucor-1}}]
 %  Part {\rm (i)} follows from \cite[Theorem 2]{ABLN2008}.
   We first assume $\underline{C}<C_*=\overline{C}$ and prove part {\rm(i)}.
Since $C_*=\overline{C}$, Lemma \ref{liu-20240526} implies $C_*=C_*^+=\underline{C}^+=\overline{C}$, and thus
by  Lemma \ref{liu-20240516} we derive %Theorem \ref{Bai-He-2020} and Proposition ...
%it holds that for any $\omega>0$,
$$\lim_{\rho\to 0 }\lambda(\omega, \rho)=\lim_{\rho\to +\infty }\lambda(\omega, \rho)=\overline{C}, \qquad \forall \omega>0.$$
%To  prove part {\rm(i)},
It suffices to show $\lambda(\omega, \rho)<\overline{C}$ for all $\omega, \rho>0$.
%    $$\lambda(\omega, \rho)> \frac{ \int_{\Omega}\hat{\beta}(x){\rm d}x}{\int_{\Omega}\hat{\gamma}(x){\rm d}x}.
% $$
Suppose on the contrary that $\lambda(\omega_*,\rho_*)=\overline{C}$ for some $\omega_*,\rho_*>0$. Let $\overline{\lambda}(\rho)$ be the principal eigenvalue of elliptic problem \eqref{liu-20240227-1}.  By \eqref{liu-20240518-5},  $\lambda(\omega_*,\rho_*)\leq \overline{\lambda}(\rho_*)\leq \overline{C}$. Due to $\lambda(\omega_*,\rho_*)=\overline{C}$, the monotonicity of $\omega\mapsto \lambda(\omega,\rho_*)$ in Theorem \ref{TH1-1} implies $\lambda(\omega,\rho_*)\equiv \overline{\lambda}(\rho_*)=\overline{C}$ for all $\omega>0$. In particular, Proposition \ref{TH-liu-20240227}{\rm (ii)} yields
$$\overline{C}=\lim_{\omega\to 0}\lambda(\omega,\rho_*)=\underline{\lambda}(\rho).$$
%where $\lambda_0(t,\rho_*)$ is the principal eigenvalue of \eqref{liu-20240227-2} with $\rho=\rho_*$.
By the monotonicity of $\rho\mapsto \underline{\lambda}(\rho)$,  we see that $\underline{\lambda}(\rho)\equiv \overline{C}$ for all $\rho>0$, and hence
$\underline{C}=\lim_{\rho\to 0}\underline{\lambda}(\rho)=\overline{C}$.
% and ${\bf A}{\bm \phi}_{\rho_*}=\widehat{\bf A}(x){\bm \phi}_{\rho_*}+g(t){\bm \phi}_{\rho_*}$  for some $1$-periodic function $g$, where ${\bm \phi}_{\rho_*}>0$ is the principal eigenfunction of  \eqref{liu-20240227-1} with $\rho=\rho_*$ corresponding to $\overline{\lambda}(\rho_*)$.
% Note that $\overline{\lambda}(\rho)\leq \overline{C}$ for all $\rho>0$, then the monotonicity of $\rho\mapsto \overline{\lambda}(\rho)$ implies $\overline{\lambda}(\rho)\equiv \overline{C}$. % for all $\rho>0$.
 %Hence, by \eqref{liu-3} we can arrive at $\underline{A}=\overline{A}$, which is a
 This contradicts our assumption. Part {\rm(i)} thus  follows.

We next show  part {\rm (ii)}. Under the assumptions in part {\rm (ii)}, Lemma \ref{liulem-4} implies that $\lambda(\omega, \rho)$ attains a local minimum at $\rho=0$.  It remains to prove parts {\rm (a)}-{\rm(c)}. % the fact $\partial_\rho\lambda(\omega,0)>0$is proved by Lemma
% Assume $\underline{C}<C_*<\overline{A}$. % and assumption {\rm(H)} hold.
    Let $\rho_{C_*}$ and $\omega_{C_*}$ be determined by Lemma \ref{d-ell} and Theorem \ref{liu-levelset} with $\ell=C_*$.  % respectively.
    % {\color{red} Set
    % $\mathcal{E}:=\{d\in(0, d_{A_*}):  d \,\,\, \text{is a local  minimal point of }\omega_{A_*}\}$
    % 2021.03.08. We can not rule out the possibility that $\omega_{A_*}$ has a sequence of local minimal points approaching zero.
    % }{\color{blue}Thanks for point out this gap. %This possibility might be ruled out by the monotonicity of $\omega_{A_*}(d)/\sqrt{d}$.
    % I will correct it as soon as possible.} and
    % \begin{equation}\label{liu-66}
    %   \omega_*:=
    %   \begin{cases}
    %   \max\limits_{d\in[0,d_{A_*}]}\omega_{A_*}(d) & \text{if  }\,\, \mathcal{E}=\emptyset,\\
    %   \min\limits_{d\in\mathcal{E}}\omega_{A_*}(d) & \text{if  }\,\, \mathcal{E}\neq\emptyset.
    %   \end{cases}
    % \end{equation}
  Set
  \begin{equation}\label{liu-66}
   \omega_*:=  \max\limits_{\rho\in[0,\rho_{C_*}]}\omega_{C_*}(\rho).
    \end{equation}
   Then $\omega_*>0$ due to $\omega_{C_*}(0)=\omega_{C_*}(\rho_{C_*})=0$. Fix any $\omega\in (0,\omega_*)$. We define
  %Set
  \begin{equation}\label{liu-67}
      \begin{split}
         \underline{\rho}:=\inf\{\rho\in(0, \rho_{C_*}): \, \omega_{C_*}(\rho)= \omega\},\quad
         \overline{\rho}:=\sup\{\rho\in(0, \rho_{C_*}): \, \omega_{C_*}(\rho)= \omega\}.
      \end{split}
  \end{equation}
      Since %$\omega_{C_*}(0)=\omega_{C_*}(\rho_{C_*})=0$ and
      $\omega<\omega_*$, it hold that $0<\underline{\rho}<\overline{\rho}$ and $\lambda(\omega, \underline{\rho})=\lambda(\omega, \overline{\rho})=C_*$, which proves part {\rm (ii)}-{\rm(a)}.

      On the other hand,
  it follows from \eqref{liu-67} %and $\omega_{A_*}(0)=\omega_{A_*}(\rho_{C_*})=0$,
   %it is clear
   that %for any $\rho\in (0,\underline{\rho})\cup (\overline{\rho},\infty)$,
   \begin{equation}\label{liu-67-1}
       \omega_{C_*}(\rho)<\omega\quad \text{and}\quad \lambda(\omega,\rho)\neq C_*,\quad \forall \rho\in (0,\underline{\rho})\cup (\overline{\rho},\infty).
   \end{equation}
   % $\omega_{C_*}(\rho)<\omega$ and  $\lambda(\omega,\rho)\neq A_*$ for $d\in (0,\underline{\rho})\cup (\overline{\rho},\infty)$.
The monotonicity of $\omega\mapsto\lambda(\omega,\rho)$ in Theorem \ref{TH1-1} implies  %Using Lemma \ref{liuprop-1}{\rm(ii)} and Lemma \ref{liulem-3}, we derive
$\lambda(\omega, \rho)> \lambda(\omega_{C_*(\rho)}, \rho)=C_*$ for  $\rho\in (0,\underline{\rho})\cup (\overline{\rho},\infty)$. This  %together with  \eqref{liu14}
proves part {\rm (ii)}-{\rm(b)}.

  It suffices to show part {\rm (ii)}-{\rm(c)}, i.e.  $\lambda(\omega,\rho)< C_*$ for some $\rho\in(\underline{\rho},\overline{\rho})$.   If not,  then $\lambda(\omega,\rho)\geq C_*$ for all $\rho\in(\underline{\rho},\overline{\rho})$. In view of $\lambda(\omega_{C_*}(\rho),\rho)= C_*$, the monotonicity in Theorem \ref{TH1-1} implies  $\omega_{C_*}(\rho)\leq \omega$ for all $\rho\in(\underline{\rho},\overline{\rho})$. This together with \eqref{liu-67-1} gives  $\omega_{C_*}(\rho)\leq \omega$ for all $\rho\in(0,  \rho_{C_*})$, which contradicts $\omega<\omega_*$ and \eqref{liu-66}. Therefore, %$\lambda(\omega,\rho)< A_*$ for some $(\underline{d},\overline{d})$ and
  part {\rm (ii)}-{\rm(c)} holds.

 The proof is now complete.
%   If not,  then $\lambda(\omega,d_1)\geq A_*$ for some $d_1\in(\underline{d},\overline{d})$. In view of $\lambda(\omega_{A_*}(d_1),d_1)= A_*$, the monotonicity in Theorem \ref{Thm1} implies  $\omega_{A_*}(d_1)\leq \omega$. By \eqref{liu-67},  $\omega_{A_*}(\underline{d})=\omega_{A_*}(\overline{d})=\omega$. Hence by continuity, there is some $d_2\in (\underline{d},\overline{d})$ such that $d_2\in\mathcal{E}$ and  $\omega_{A_*}(d_2)\leq \omega$. This contradicts $\omega<\omega_*$ and \eqref{liu-66}. Therefore, $\lambda(\omega,\rho)< A_*$ for $(\underline{d},\overline{d})$ and  part {\rm (ii)}-{\rm(b)} holds.
%   The proof is now complete.
\end{proof}

  \bigskip
   \noindent{\bf Acknowledgement.} This work is partially supported by the NSFC grant (12201041) %China National Postdoctoral Program for Innovative Talents (BX20220377), China Postdoctoral Science Foundation (2022M710391)
   and Beijing Institute of Technology Research Fund Program for Young Scholars (XSQD-202214001).
 %\appendix

% \end{theorem}

\bigskip
\bigskip
\baselineskip 18pt
\renewcommand{\baselinestretch}{1.2}

\end{document}